\newcommand{\ra}[1]{\renewcommand{\arraystretch}{#1}}
\newcommand{\Itwo}{\begin{bmatrix} 1 & \;0\,\\0 & \;1\,\end{bmatrix}}
\newcommand{\Rtwo}{\begin{bmatrix} 1 & 0\\0 & \!\!-1\end{bmatrix}}
\newcommand{\R}{\mathbb{R}}
\newcommand{\Z}{\mathbb{Z}}
\newcommand{\Sym}{\mathcal{S}}
\newcommand{\conv}{\textup{conv}}
\newcommand{\symdiff}{\Delta}
\newcommand{\PP}{\textup{PP}}
\newcommand{\Cube}{\textup{C}}
\newcommand{\Spin}{\textup{Spin}}
\newcommand{\diag}{\textup{diag}}
\newcommand{\Diag}{\mathcal{D}}
\newcommand{\Cl}{\textup{Cl}}
\newcommand{\Ieven}{\mathcal{I}_{\text{even}}}
\newcommand{\conj}[1]{\overline{#1}}
\newcommand{\cliffid}{\mathbf{1}}
\newcommand{\tildeQ}{\widetilde{Q}}
\newcommand{\projto}[1]{\pi_{#1}}
\newcommand{\projfrom}[1]{\pi_{#1}^*}
\newcommand{\projtofrom}[1]{\projfrom{#1}\projto{#1}}
\newcommand{\parity}{\textup{parity}}
\newcommand{\psdlift}{PSD lift}
\newcommand{\psdlifts}{PSD lifts}
\newcommand{\psd}{\succeq}
\newcommand{\nsd}{\preceq}
\DeclareMathOperator*{\tr}{\textup{tr}}
\theoremstyle{plain}
\newtheorem{thm}{Theorem}[section]
\newtheorem{proposition}[thm]{Proposition}
\newtheorem{lemma}[thm]{Lemma}
\newtheorem{theorem}[thm]{Theorem}
\newtheorem{corollary}[thm]{Corollary}
\theoremstyle{definition}
\newtheorem{example}[thm]{Example}
\newtheorem{definition}[thm]{Definition}
\theoremstyle{remark}
\newtheorem{remark}[thm]{Remark}
\title{Semidefinite descriptions of the convex hull of rotation matrices}
\author{James Saunderson \and Pablo A.~Parrilo \and Alan S.~Willsky
    \thanks{The authors are with the Laboratory for Information and Decision Systems, Department of Electrical 
        Engineering and Computer Science, Massachusetts Institute of Technology, Cambridge MA 02139, USA. 
        Email: \{jamess,parrilo,willsky\}@mit.edu.}}
\begin{document}
\maketitle
\begin{abstract}
    We study the convex hull of $SO(n)$, thought of as the set of $n\times n$
    orthogonal matrices with unit determinant, from the point of view of
    semidefinite programming. We show that the convex hull of $SO(n)$ is
    \emph{doubly spectrahedral}, i.e.~both it and its polar have a description
    as the intersection of a cone of positive semidefinite matrices with an
    affine subspace.  Our spectrahedral representations are explicit, and are
    of minimum size, in the sense that there are no smaller spectrahedral
    representations of these convex bodies. 
\end{abstract}
\section{Introduction}
\label{sec:intro}
Optimization problems where the decision variables are constrained to be in the
set of orthogonal matrices
\begin{equation}
    \label{eq:ondef}
    O(n) := \{X\in \R^{n\times n}:\; X^TX = I\}
\end{equation}
arise in many contexts (see, e.g., \cite{nemirovski2007sums,naor2012efficient}
and references therein), particularly when searching over Euclidean isometries
or orthonormal frames.  In some situations, especially those arising from
physical problems, we require the additional constraint that the decision
variables be in the set of rotation matrices
\begin{equation}
    \label{eq:sondef}
    SO(n) := \{X\in \R^{n\times n}:\; X^TX = I,\;\;\det(X) = 1\}
\end{equation}
representing Euclidean isometries that also \emph{preserve orientation}. For
example, these additional constraints arise in problems involving attitude
estimation for spacecraft \cite{psiaki2009generalized} or pose estimation in
computer vision applications \cite{horowitz2014}, or in understanding protein
folding \cite{longinetti2010convex}. The unit determinant constraint is
important in these situations because we typically cannot reflect physical
objects such as spacecraft or molecules.

The set of $n\times n$ rotation matrices is non-convex, so optimization
problems over rotation matrices are ostensibly non-convex optimization
problems. An important approach to global non-convex optimization is to
approximate the original non-convex problem with a tractable convex
optimization problem. In some circumstances it may even be possible to
\emph{exactly reformulate} the original non-convex problem as a tractable
convex problem.  This approach to global optimization via convexification has
been very influential in combinatorial optimization
\cite{schrijver2003combinatorial}, and more generally in polynomial
optimization via the machinery of moments and sums of squares
\cite{blekherman2013semidefinite}.
As an example of a problem amenable to this approach, in Section~\ref{sec:app}
we describe the problem of jointly estimating the attitude and spin-rate of a
spinning satellite and show how to reformulate this ostensibly non-convex
problem as a convex optimization problem that, using the constructions in this
paper, can be expressed as a semidefinite program. 

When we attempt to convexify optimization problems involving rotation matrices
two natural geometric objects arise. The first of these is the \emph{convex
hull} of $SO(n)$ which we denote, throughout,  by $\conv\,SO(n)$.  The second
convex body of interest in this paper is the \emph{polar} of $SO(n)$, the set
of linear functionals that take value at most one on $SO(n)$, i.e.,
\[ SO(n)^\circ = \{Y\in \R^{n\times n}: \langle Y,X\rangle \leq 1\;\text{for all $X\in SO(n)$}\}\]
where we have identified $\R^{n\times n}$ with its dual space via the trace
inner product $\langle Y,X\rangle = \tr(Y^TX)$.  These two convex bodies are
closely related. Since $\conv\,SO(n)$ is closed and contains the origin it
follows from basic results of convex analysis \cite[Theorem
14.5]{rockafellar1997convex} that $\conv\,SO(n) = {SO(n)^{\circ}}^\circ$.

We also study the convex hull and the polar of orthogonal matrices in this
paper. It is well-known that these correspond to commonly used matrix norms
(see, e.g.,~\cite{sanyal2011orbitopes}).  The convex hull of $O(n)$ is the
\emph{operator norm ball}, the set of $n\times n$ matrices with largest
singular value at most one, and the polar of $O(n)$ is the \emph{nuclear norm
ball}, the set of $n \times n$ matrices such that the sum of the singular
values is at most one, i.e.\
\[ \conv\,O(n) = \left\{X\in \R^{n\times n}: \sigma_1(X) \leq 1\right\}\quad\text{and}\quad
O(n)^\circ = \left\{X\in \R^{n\times n}: \sum_{i=1}^{n}\sigma_i(X) \leq 1\right\}.\]

Note that $O(n)$ is the (disjoint) union of $SO(n)$ and the set $SO^-(n) :=
\{X\in \R^{n\times n}: X^TX = I, \det(X)=-1\}$.  As such, it follows from basic
properties of the polar \cite[Corollary 16.5.2]{rockafellar1997convex} that  
\begin{equation}
    \label{eq:onpolar-intersection}
    O(n)^\circ = SO(n)^\circ \cap SO^-(n)^\circ
\end{equation}
allowing us to deduce properties of $O(n)^\circ$ from those of $SO(n)^\circ$.
On the other hand we show in Proposition~\ref{prop:son2} that
\begin{equation}
    \label{eq:convson-intersection}
    \conv\,SO(n) = \conv\,O(n) \cap (n-2)SO^-(n)^\circ
\end{equation}
allowing us to deduce properties of $\conv\,SO(n)$ from properties of
$\conv\,O(n)$ and $SO^-(n)^\circ$. Figure~\ref{fig:sononpic} illustrates the
differences between $\conv\,SO(n)$ and $\conv\,O(n)$ and the relationship
described in~\eqref{eq:onpolar-intersection}.

\begin{figure}
\begin{center}
    \subfloat[][A $2$-dimensional projection of 
    $\conv\,SO(3)$ (red), $\conv\,SO^-(3)$ (blue), and $\conv\,O(3) = \conv\,{[}SO(3)\cup SO^-(3){]}$ (black).]
    {\includegraphics[scale=0.35]{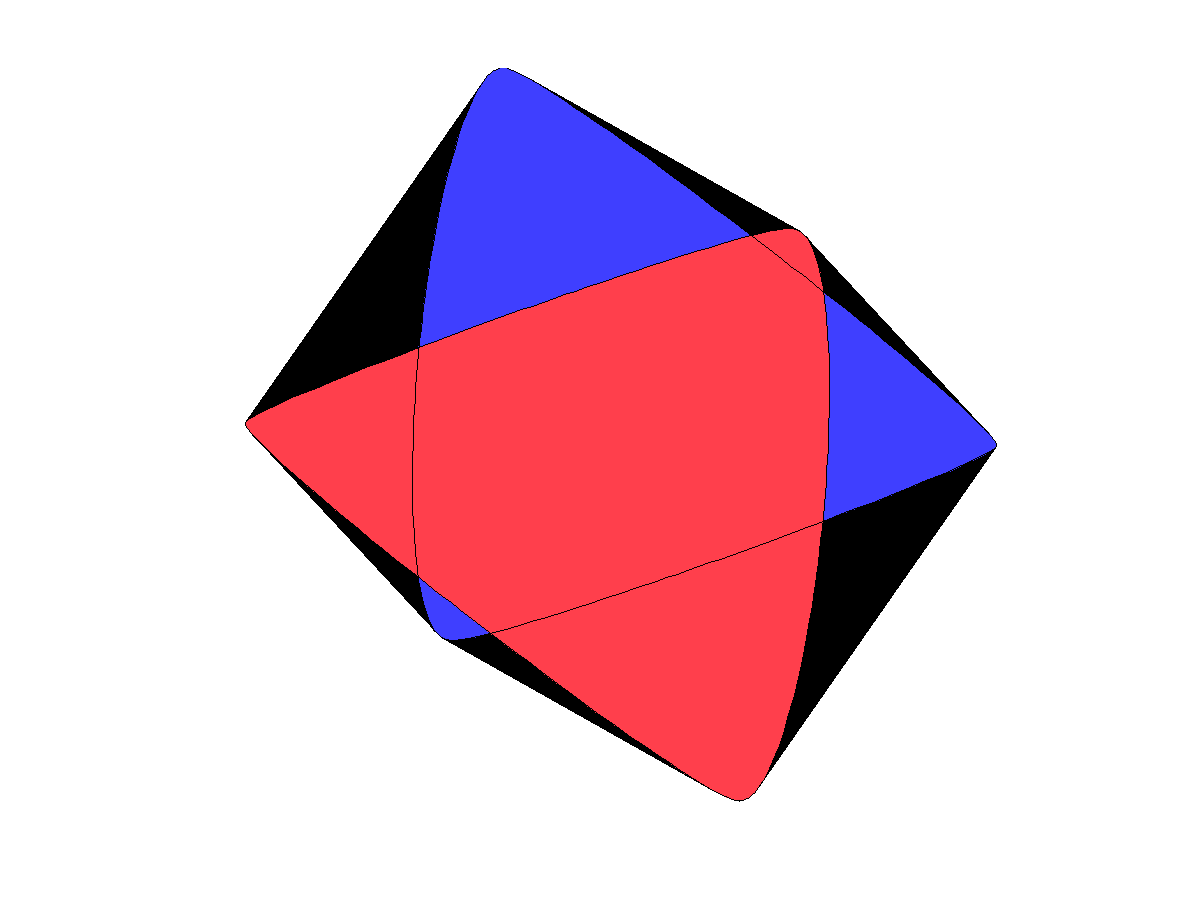}\label{fig:conv}}
    \hfill\subfloat[][The corresponding $2$-dimensional section of $SO(3)^\circ$ (red), $SO^-(3)^\circ$ (blue),
    and $O(3)^\circ = SO(3)^\circ \cap SO^-(3)^\circ$ (black). ]{\includegraphics[scale=0.35]{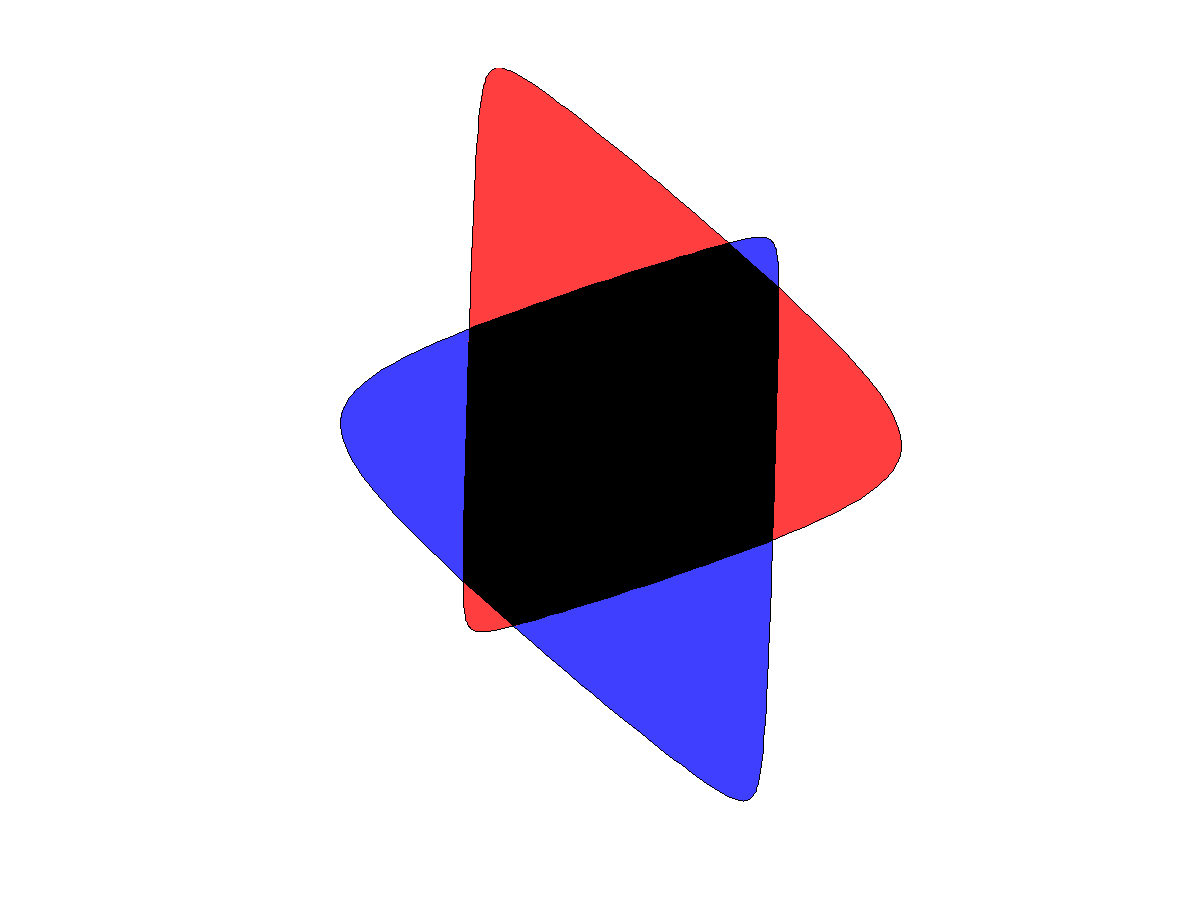}\label{fig:polar}}
\end{center}
\caption{\label{fig:sononpic} Pictures of some of the convex bodies considered in 
    this paper. These were created by optimizing $100$ linear functionals over
    each of these sets to obtain $100$ boundary points. The optimization was
    performed by implementing our spectrahedral representations in the parser
    YALMIP \cite{yalmip}, and solving the semidefinite programs numerically
using SDPT3 \cite{toh1999sdpt3}.}
\end{figure}
The convex bodies $\conv\,SO(n)$ and $\conv\,O(n)$ are examples of
\emph{orbitopes}, a family of highly symmetric convex bodies that arise from
representations of groups
\cite{sanyal2011orbitopes,barvinok1988convex,barvinok2005convex}. Suppose a
compact group $G$ acts on $\R^n$ by linear transformations and $x_0\in \R^n$.
Then the \emph{orbit} of $x_0$ under $G$ is
\[ G\cdot x_0 = \{g\cdot x_0: g\in G\} \subset\R^n\]
and the corresponding \emph{orbitope} is $\conv\,(G\cdot x_0)$, the convex hull
of the orbit.  The sets $O(n)$ and $SO(n)$ defined above can be thought of as
the orbit of the identity matrix $I\in \R^{n\times n}$ under the linear action
of the groups $O(n)$ and $SO(n)$, respectively, by right multiplication on
$n\times n$ matrices.  The corresponding orbitopes are known as the
\emph{tautological $O(n)$ orbitope} and the \emph{tautological $SO(n)$
orbitope} respectively \cite{sanyal2011orbitopes}.  The set $SO^-(n)$ can be
viewed as the orbit of $R:=\diag^*(1,1,\ldots,1,-1)$, the diagonal matrix with
diagonal entries $(1,1,\ldots,1,-1)$, under the same $SO(n)$ action on $n\times
n$ matrices.  Note that $SO^-(n)$ is then the image of $SO(n)$ under the
invertible linear map $X\mapsto R\cdot X$.

\paragraph{Spectrahedra}
For convex reformulations or relaxations involving the convex hull of $SO(n)$
to be useful from a computational point of view we need an effective
description of the convex body $\conv\,SO(n)$.  One effective way to describe a
convex body is to express it as the intersection of the cone of symmetric
positive semidefinite matrices with an affine subspace. Such convex bodies are
called \emph{spectrahedra} \cite{ramana1995some} and are natural
generalizations of polyhedra. Algebraically, a convex subset $C$ of $\R^n$
(containing the origin in its interior\footnote{We can assume this without loss
of generality by translating $C$ and restricting to its affine hull}) is a
spectrahedron if it can be expressed as the feasible region of a linear matrix
inequality of the form
\begin{equation}
    \label{eq:defspec}
C = \left\{x\in \R^n: I_m + \sum_{i=1}^{n} A_i x_i \psd 0\right\}
\end{equation}
where $I_m$ is the $m\times m$ identity matrix, $A_1,A_2,\ldots,A_n$ are
$m\times m$ real symmetric matrices and $M \psd 0$ means that $M$ is a
symmetric positive semidefinite matrix.  If the matrices $A_i$ are $m\times m$,
we call the description \eqref{eq:defspec} a \emph{spectrahedral representation
of size $m$}.

Giving a spectrahedral representation for a convex set has algebraic,
geometric, and algorithmic implications. Algebraically, a spectrahedral
representation of $C$ of size $m$ as in \eqref{eq:defspec} tells us that the
degree $m$ polynomial  $p(x) = \det(I + \sum_{i=1}^{n}A_i x_i)$ vanishes on the
boundary of $C$, and that $C$ itself can be written as the region defined by
$m$ polynomial inequalities (i.e.~it is a basic closed semi-algebraic set)
\cite[Theorem 20]{renegar2006hyperbolic}.  Geometrically, a spectrahedral
representation of $C$ gives information about its facial structure. For
example, it is known that all faces of a spectrahedron are exposed (i.e., can
be obtained as the intersection of the spectrahedron with a supporting
hyperplane), since the same is true for the positive semidefinite cone.

From the point of view of optimization, problems involving minimizing a linear
functional over a spectrahedron are called \emph{semidefinite optimization}
problems \cite{blekherman2013semidefinite} and are natural generalizations of
the more well-known class of linear programming problems. Semidefinite
optimization problems can be solved (to any desired accuracy) in time
polynomial in $n$ and $m$.

The convex sets that can be obtained as the images of spectrahedra under linear
maps are also of interest. Indeed to minimize a linear functional over a
projection of a spectrahedron, one can simply lift the linear functional and
minimize it over the spectrahedron itself using methods for semidefinite
optimization.  We say a convex body has a \emph{\psdlift{}} if it has a
description as a projection of a spectrahedron (see Section~\ref{sec:eqpsd}).
\psdlifts{} are important because they form a strictly larger family of convex
sets than spectrahedra, and because some spectrahedra have \psdlifts{} that are
much more concise than their smallest spectrahedral representations
(generalizing the notion of extended formulations for polyhedra). On the other
hand convex bodies that have \psdlifts{} do not enjoy the same nice algebraic
and geometric properties as spectrahedra---indeed they are semialgebraic but
not necessarily basic semialgebraic, and are not necessarily facially
exposed~\cite{blekherman2013semidefinite}.

Throughout much of the paper we consider only spectrahedral representations,
confining our discussion of \psdlifts{} to Section~\ref{sec:eqpsd}.

\paragraph{Doubly spectrahedral convex sets}
In this paper we are interested in both $SO(n)^\circ$ and $\conv\,SO(n)$, and
so study both from the point of view of semidefinite programming. For finite
sets $S$, both $S^\circ$ and $\conv\,S$ are polyhedra. On the other hand, for
infinite sets $S$, usually neither $S^\circ$ nor $\conv\,S$ are spectrahedra.
Even if a convex set is a spectrahedron, typically its polar is not a
spectrahedron (see Section~\ref{sec:conclusion}).  We use the term \emph{doubly
spectrahedral convex sets} to refer to those very special convex sets $C$ with
the property that both $C$ and $C^\circ$ are spectrahedra.

\paragraph{Main contribution} 
The main contribution of this paper is to establish that $\conv\,SO(n)$ is
doubly spectrahedral and to give explicit spectrahedral representations of both
$SO(n)^\circ$ and $\conv\,SO(n)$. 

\paragraph{Main proof technique} The main idea behind our representations is
that we start with a \emph{parameterization} of $SO(n)$, rather than working
with the defining equations in~\eqref{eq:sondef}.  The parameterization is a
direct (and classical) generalization of the widely used unit quaternion
parameterization of $SO(3)$.  In higher dimensions the unit quaternions are
replaced with $\Spin(n)$, a multiplicative subgroup of the invertible elements
of a Clifford algebra. In the cases $n=2$ and $n=3$ it is relatively
straightforward to produce our semidefinite representations directly from this
parameterization.  For $n\geq 4$ the parameterization does not immediately
yield our semidefinite representations. The additional arguments required to
establish the correctness of our representations for $n\geq 4$ form the main
technical contribution of the paper.

\subsection{Statement of results}
\label{sec:results}

In this section we explicitly state the spectrahedral representations that we
prove are correct in subsequent sections of the paper. In particular we state
spectrahedral representations for $SO(n)^\circ$ and $\conv\,SO(n)$, as well as
a spectrahedral representation of $O(n)^\circ$, the nuclear norm ball.  All the
spectrahedral representations stated in this section are of minimum size (see
Theorem~\ref{thm:lower}). The reader primarily interested in implementing our
semidefinite representations should find all the information necessary to do so
in this section.

\paragraph{Matrices of the spectrahedral representations}
Our main results are stated in terms of a collection of symmetric
$2^{n-1}\times 2^{n-1}$ matrices denoted $(A_{ij})_{1\leq i,j\leq n}$. We give
concrete descriptions of them here in terms of the Kronecker product of
$2\times 2$ matrices, deferring more invariant descriptions to
Appendix~\ref{app:clif}.  The matrices $A_{ij}$ can be expressed as
\begin{equation}
    \label{eq:AB}
    A_{ij} = -P_{\textup{even}}^T\lambda_i \rho_jP_{\textup{even}}
\end{equation}
where $(\lambda_i)_{i=1}^{n}$ and $(\rho_i)_{i=1}^n$ are the $2^n\times 2^n$
skew-symmetric matrices defined concretely by 
\begin{align*}
    \lambda_i & = \overbrace{\Rtwo \otimes \cdots \otimes \Rtwo}^{i-1} \otimes \begin{bmatrix} 0 & -1\\1 & 0\end{bmatrix}\otimes 
    \overbrace{\Itwo \otimes \cdots \otimes \Itwo}^{n-i}\\
    \rho_i & = \underbrace{\Itwo \otimes \cdots \otimes \Itwo}_{i-1} \otimes \begin{bmatrix} 0 & -1\\1 & 0\end{bmatrix}\otimes 
    \underbrace{\Rtwo \otimes \cdots \otimes \Rtwo}_{n-i}\\
\end{align*}
and $P_{\textup{even}}$ is the $2^{n}\times 2^{n-1}$ matrix with orthonormal columns 
\begin{align*}
    P_{\textup{even}} & = \frac{1}{2}\begin{bmatrix} I_{2^{n-1}} + \Rtwo\otimes \cdots \otimes \Rtwo\\
    \phantom{\cdots}\\
        I_{2^{n-1}} - \Rtwo \otimes \cdots \otimes\Rtwo\end{bmatrix}.
\end{align*}
Note that $P_{\textup{even}}^TMP_{\textup{even}}$ just selects a particular
$2^{n-1}\times 2^{n-1}$ principal submatrix of $M$.  Since, for any pair $1\leq
i,j\leq n$, $\lambda_i$ and $\rho_j$ are skew symmetric and commute it follows
that each $A_{ij}$  is symmetric. Furthermore since $\lambda_i$ and $\rho_j$
are signed permutation matrices, so is $-\lambda_i\rho_j$. From this we can see
that all of the entries of the $A_{ij}$ are $0$, $1$, or $-1$. 

    \paragraph{Spectrahedral representations}
    The following, which we prove in Section~\ref{sec:reps}, is the main
    technical result of the paper.
    \begin{theorem}
        \label{thm:sonpolar}
        The polar of $SO(n)$ is a spectrahedron.  Explicitly
        \begin{equation}
            \label{eq:sonpolar}
            SO(n)^\circ = 
        \bigg\{Y\in \R^{n\times n}: \sum_{i,j=1}^{n} A_{ij}Y_{ij} \nsd I_{2^{n-1}}\bigg\}
    \end{equation}
        where the $2^{n-1}\times 2^{n-1}$ matrices $A_{ij}$ are defined in
        \eqref{eq:AB}.  
    \end{theorem}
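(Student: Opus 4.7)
The plan is to work through the classical $\Spin(n)\to SO(n)$ double cover, realized via the matrix model of the Clifford algebra $\Cl_n$ implicit in the definitions of $\lambda_i$, $\rho_j$, and $P_{\textup{even}}$. In this model, $\lambda_i$ and $\rho_j$ implement left and right Clifford multiplication by the generators $e_i,e_j$, while $P_{\textup{even}}$ embeds the $2^{n-1}$-dimensional even subalgebra into $\R^{2^n}$. The cornerstone of the argument is the quadratic identity
\[
  R_{ij} \;=\; q^{T} A_{ij}\, q, \qquad R=\pi(q),\quad q\in\Spin(n),\quad \|q\|=1,
\]
where $q$ is viewed as a unit element of $\R^{2^{n-1}}$ via $P_{\textup{even}}$ and $\pi$ is the double-cover map. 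I would establish this identity first; combinatorially it just records the fact that $\pi(q)$ acts on $\R^{n}\subset\Cl_n$ by the conjugation $x\mapsto q\,x\,q^{-1}$, with the matrix $\lambda_i\rho_j$ producing precisely the $(i,j)$ entry of the image.

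With this identity in hand, the inclusion $\{Y:\sum_{ij}A_{ij}Y_{ij}\nsd I\}\subseteq SO(n)^{\circ}$ is immediate: for any $Y$ in the spectrahedron and any $R=\pi(q)\in SO(n)$ with $\|q\|=1$,
\[
  \langle Y,R\rangle \;=\; \sum_{ij}Y_{ij}\,q^{T}A_{ij}q \;=\; q^{T}\!\Bigl(\sum_{ij}Y_{ij}A_{ij}\Bigr)q \;\leq\; q^{T}q \;=\; 1.
\]
For the reverse containment I would dualize: both sets are closed convex and contain the origin, so it is enough to check that the polar of the spectrahedron is contained in $\conv\,SO(n)=SO(n)^{\circ\circ}$. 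Standard semidefinite duality identifies this polar with
\[
  \conv\bigl\{\Phi(v)\,:\,v\in\R^{2^{n-1}},\ \|v\|\leq 1\bigr\},\qquad \Phi(v)_{ij} := v^{T}A_{ij}v,
\]
so the theorem reduces to the following \emph{key lemma}: for every unit $v\in\R^{2^{n-1}}$, the matrix $\Phi(v)$ lies in $\conv\,SO(n)$.

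When $v\in\Spin(n)\subseteq\R^{2^{n-1}}$ the cornerstone identity gives $\Phi(v)=\pi(v)\in SO(n)$, so the lemma is immediate for $n=2,3$ since in those dimensions $\Spin(n)$ exhausts the unit sphere of $\Cl^0_n$. The main obstacle, and what I expect to be the core technical content of the paper, is the case $n\geq 4$, where $\Spin(n)$ is a proper subvariety of the sphere and a generic $v$ is not even close to $\Spin(n)$. My plan there is to exploit equivariance: left and right Clifford multiplication of $v$ by unit elements of $\Spin(n)$ correspond, under $\Phi$, to left and right multiplication of $\Phi(v)$ by the associated rotations, so $\Phi$ is $\Spin(n)\times\Spin(n)$-equivariant. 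Since $\conv\,SO(n)$ is $SO(n)\times SO(n)$-invariant, one may replace $v$ by any representative of its orbit---say one for which $\Phi(v)$ is diagonal in the usual SVD sense---reducing the lemma to the problem of writing such a canonical $v\,v^{T}$ as a convex combination $\sum_{k}c_{k}\,q_{k}q_{k}^{T}$ with $c_{k}\geq 0$, $\sum_{k}c_{k}=1$, and each $q_{k}\in\Spin(n)$; applying $\Phi$ then realizes $\Phi(v)=\sum_{k}c_{k}\pi(q_{k})$ as an explicit convex combination of rotations. Producing such a decomposition---presumably via a maximal torus of $\Spin(n)$ built from commuting Clifford rotations in orthogonal coordinate planes, together with averaging over a suitable finite subgroup to capture the extra sphere symmetries not already in $\Spin(n)$---is the essential calculation I expect to be the hardest step.
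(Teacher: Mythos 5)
Your cornerstone identity $R_{ij}=q^{T}A_{ij}q$ for $R=\pi(q)$, $q\in\Spin(n)$, is exactly the paper's Proposition~\ref{prop:mef}, and your derivation of the easy inclusion $\{Y:\sum_{ij}A_{ij}Y_{ij}\nsd I\}\subseteq SO(n)^{\circ}$ is correct and matches the paper. Choosing to prove the reverse inclusion by dualizing---i.e., showing that $\Phi(v)\in\conv\,SO(n)$ for every unit $v\in\Cl^{0}(n)$---is a legitimate alternative to the paper's primal argument, and your idea to use $\Spin(n)\times\Spin(n)$-equivariance to reduce to the case where $\Phi(v)$ is diagonal is the right move and mirrors Propositions~\ref{prop:spinequivariance} and~\ref{prop:eig}.

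However, the concrete plan you give for the key lemma cannot work. You propose to write $v\,v^{T}=\sum_{k}c_{k}\,q_{k}q_{k}^{T}$ with $c_{k}\geq 0$, $\sum_{k}c_{k}=1$, and $q_{k}\in\Spin(n)$, and then push this decomposition forward through $\Phi$. But for a unit vector $v$, the matrix $v\,v^{T}$ is an \emph{extreme point} of $\{Z\psd 0:\tr Z\leq 1\}$: any convex decomposition of $vv^{T}$ into unit rank-one matrices $q_kq_k^T$ forces $q_{k}=\pm v$ for every $k$ with $c_{k}>0$. So when $n\geq 4$ and $v\notin\pm\Spin(n)$, no such decomposition exists, and this is precisely the regime your argument is meant to handle. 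The collapse from the full trace-one positive semidefinite slice onto $\conv\,SO(n)$ happens \emph{because the linear map $Z\mapsto(\langle A_{ij},Z\rangle)_{ij}$ has a nontrivial kernel}, not because $vv^{T}$ itself lies in the convex hull of $\{qq^{T}:q\in\Spin(n)\}$ inside $\Sym_{+}^{2^{n-1}}$.

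What saves the argument, and what the paper actually exploits, is the diagonality of the $A_{ii}$ in the basis $(e_{I})_{I\in\Ieven}$ (Proposition~\ref{prop:mef}) together with the fact that each $e_{I}\in\Spin(n)$ (Proposition~\ref{prop:basis}). The paper's Proposition~\ref{prop:eig} runs the reduction on the \emph{primal} side: after applying the special singular value decomposition to $Y$, the form $\sum Y_{ij}A_{ij}$ becomes diagonal, so its eigenbasis consists of the $e_{I}$, all in $\Spin(n)$, whence the maximum over the sphere equals the maximum over $\Spin(n)$. The dual version of this, which is what you should run after your equivariance reduction to diagonal $\Phi(v)$: since each $A_{ii}$ is diagonal, $\Phi(v)_{ii}=\sum_{I\in\Ieven}v_{I}^{2}\,(A_{ii})_{II}$, and the vectors $((A_{11})_{II},\ldots,(A_{nn})_{II})$ for $I\in\Ieven$ are exactly the diagonal matrices $\pi(e_{I})$, which are the $\pm 1$-vectors with an even number of $-1$'s---the vertices of the parity polytope. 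So $\Phi(v)=\sum_{I}v_{I}^{2}\,\pi(e_{I})$ as an $n\times n$ matrix, a genuine convex combination of rotations, even though $vv^{T}\neq\sum_{I}v_{I}^{2}\,e_{I}e_{I}^{T}$. Replacing your proposed lift-space decomposition of $vv^{T}$ with this identity on the image side closes the gap and recovers a valid proof, structurally equivalent to the paper's.
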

    Since $O(n) = SO(n)\cup SO^-(n)$ as a corollary of Theorem~\ref{thm:sonpolar} we obtain a spectrahedral representation
    of $O(n)^\circ = SO(n)^\circ \cap SO^-(n)^\circ$. 
    \begin{theorem}
        \label{thm:onpolar}
        The polar of $O(n)$ is a spectrahedron. Explicitly
        \[ O(n)^\circ = \bigg\{Y\in \R^{n\times n}: \sum_{i,j=1}^{n}
    A_{ij}Y_{ij} \nsd I_{2^{n-1}},\;\sum_{i,j=1}^{n} A_{ij}[RY]_{ij} \nsd I_{2^{n-1}}\bigg\}.\]
        where $R = \diag^*(1,1,\ldots,1,-1)$.
    \end{theorem}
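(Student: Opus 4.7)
The plan is to derive the representation of $O(n)^\circ$ directly from Theorem~\ref{thm:sonpolar} by combining two ingredients already noted in the introduction: the identity $O(n)^\circ = SO(n)^\circ \cap SO^-(n)^\circ$ in~\eqref{eq:onpolar-intersection}, and the fact that $SO^-(n)$ is the image of $SO(n)$ under the linear automorphism $X \mapsto R X$, where $R = \diag^*(1,1,\ldots,1,-1)$. So at the top level the proof is just a two-line corollary of Theorem~\ref{thm:sonpolar}.

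First I would unpack \eqref{eq:onpolar-intersection}, which follows from the fact that the polar of a union of (origin-symmetric compatible) sets is the intersection of the polars, together with $O(n) = SO(n) \sqcup SO^-(n)$. Next I would translate $SO^-(n)^\circ$ back to $SO(n)^\circ$ using the parameterization $SO^-(n) = R \cdot SO(n)$. Since $R^T = R$, for any $Y \in \R^{n \times n}$ and any $X \in SO(n)$ we have $\langle Y, RX\rangle = \tr(Y^T R X) = \tr((RY)^T X) = \langle RY, X\rangle$, so $Y \in SO^-(n)^\circ$ if and only if $RY \in SO(n)^\circ$.

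Applying Theorem~\ref{thm:sonpolar} to both $Y$ and $RY$ then gives the two linear matrix inequalities
\[
\sum_{i,j=1}^{n} A_{ij} Y_{ij} \nsd I_{2^{n-1}} \quad\text{and}\quad \sum_{i,j=1}^{n} A_{ij} [RY]_{ij} \nsd I_{2^{n-1}},
\]
and intersecting these two spectrahedra produces exactly the representation in the statement. Since both constraints are LMIs in the entries of $Y$ (the map $Y \mapsto RY$ is linear), the resulting intersection is a spectrahedron, so the corollary also records that $O(n)^\circ$ is spectrahedral.

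There is essentially no technical obstacle here once Theorem~\ref{thm:sonpolar} is in hand; the only points requiring a sentence of justification are the disjoint union structure $O(n) = SO(n) \sqcup SO^-(n)$ underlying~\eqref{eq:onpolar-intersection} and the symmetry $R^T = R$ that lets one pull the action of $R$ across the trace inner product. If desired, one can also assemble the two LMIs into a single block-diagonal LMI of size $2 \cdot 2^{n-1}$, but that is merely cosmetic and not needed for the statement as given.
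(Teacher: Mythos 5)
Your proposal is correct and follows essentially the same route as the paper: combine $O(n)^\circ = SO(n)^\circ \cap SO^-(n)^\circ$ with the observation that $SO^-(n) = R\cdot SO(n)$ and $\langle Y,RX\rangle = \langle RY,X\rangle$, so $Y\in SO^-(n)^\circ$ iff $RY\in SO(n)^\circ$, then apply Theorem~\ref{thm:sonpolar} twice. No gaps; the paper's proof is the same two-line corollary of Theorem~\ref{thm:sonpolar}.
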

    
    Just because a convex set $C$ is a spectrahedron does not, in general, mean
    that its polar is also spectrahedron (see Section~\ref{sec:conclusion} for
    a simple example). Even if we are in the special case where $C$ is doubly
    spectrahedral, we cannot simply dualize a spectrahedral representation of
    $C$ to obtain a spectrahedral representation of its polar.  Nevertheless,
    by a separate argument we can show that $\conv\,SO(n) = \conv\,O(n) \cap
    (n-2) SO^-(n)^\circ$ (Proposition~\ref{prop:son2}) to obtain a
    \emph{spectrahedral} representation of $\conv\,SO(n)$.
We explain how this works in detail in Section~\ref{sec:son}.
    \begin{theorem}
        \label{thm:son}
        The convex hull of $SO(n)$ is a spectrahedron. Explicitly
        \begin{align}
            \textup{conv}\;SO(n) & = \bigg\{X\in \R^{n\times n}: 
                \begin{bmatrix} 0& X\\X^T & 0\end{bmatrix} \nsd I_{2n},\;\;
        \sum_{i,j=1}^{n} A_{ij}[RX]_{ij}
        \nsd (n-2)I_{2^{n-1}}\bigg\}.\label{eq:son}
        \end{align}
        In the special cases $n=2$ and $n=3$ these representations can be
        simplified to 
        \begin{align}
        \conv\,SO(2) & = \bigg\{\begin{bmatrix} c & -s\\s & c\end{bmatrix}\in
    \R^{2\times 2}: \begin{bmatrix} 1+c & s\\s & 1-c\end{bmatrix} \psd
        0\bigg\}\quad\text{and}\label{eq:so2}\\
            \conv\,SO(3) & = \bigg\{X\in \R^{3\times 3}: \sum_{i,j=1}^{3}
A_{ij}[RX]_{ij}\nsd I_4\bigg\}\label{eq:so3}\\
& = \left\{X\in \R^{3\times 3}: \left[\begin{smallmatrix}
1-X_{11}-X_{22}+X_{33} &X_{13}+X_{31} &X_{12}-X_{21} &X_{23}+X_{32}\\
X_{13}+X_{31}&1+X_{11}-X_{22}-X_{33}&X_{23}-X_{32}&X_{12}+X_{21}\\
X_{12}-X_{21}&X_{23}-X_{32}&1+X_{11}+X_{22}+X_{33}&X_{31}-X_{13}\\
X_{23}+X_{32}&X_{12}+X_{21}&X_{31}-X_{13}&1-X_{11}+X_{22}-X_{33}\end{smallmatrix}\right] \psd 0\right\}.\nonumber
        \end{align}
    \end{theorem}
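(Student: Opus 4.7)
The plan is to assemble the general representation~\eqref{eq:son} from three ingredients already in hand: Proposition~\ref{prop:son2}, which gives the decomposition $\conv\,SO(n) = \conv\,O(n) \cap (n-2)SO^-(n)^\circ$; the standard spectrahedral description of the operator norm ball; and Theorem~\ref{thm:sonpolar}, which represents $SO(n)^\circ$ as a spectrahedron. First I would represent $\conv\,O(n)=\{X:\|X\|_{\textup{op}}\leq 1\}$ via the first LMI, observing that the block matrix $\begin{bmatrix} 0 & X\\X^T & 0\end{bmatrix}$ has spectrum $\{\pm\sigma_i(X)\}_{i=1}^n$, so $\begin{bmatrix} 0 & X\\X^T & 0\end{bmatrix}\nsd I_{2n}$ is equivalent to $\sigma_1(X)\leq 1$. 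For the second LMI I would use that $SO^-(n)=R\cdot SO(n)$ with $R=R^T=R^{-1}$, so $Y\in SO^-(n)^\circ$ iff $\langle Y,RX\rangle\leq 1$ for all $X\in SO(n)$, iff $RY\in SO(n)^\circ$. Applying Theorem~\ref{thm:sonpolar} and scaling by $(n-2)$ gives $(n-2)SO^-(n)^\circ = \{Y: \sum_{i,j}A_{ij}[RY]_{ij}\nsd (n-2)I_{2^{n-1}}\}$. Intersecting via Proposition~\ref{prop:son2} then produces~\eqref{eq:son}.

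For the $n=3$ simplification, $n-2=1$ and the second LMI already describes $SO^-(3)^\circ$; it remains to show the operator-norm constraint is redundant, i.e., $SO^-(3)^\circ \subseteq \conv\,O(3)$. I would prove this via SVD: writing $Y=U\Sigma V^T$ and substituting $W=U^TXV$ turns $\max_{X\in SO^-(3)}\langle Y,X\rangle$ into $\max\{\tr(\Sigma W): W\in O(3),\;\det W=-\det(UV^T)\}$, which equals $\sigma_1+\sigma_2+\sigma_3$ or $\sigma_1+\sigma_2-\sigma_3$ depending on the sign, and in either case is at least $\sigma_1=\|Y\|_{\textup{op}}$ since $\sigma_2\geq\sigma_3\geq 0$. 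The explicit $4\times 4$ matrix in~\eqref{eq:so3} would then be obtained either by substituting $n=3$ into the definitions of $\lambda_i$, $\rho_j$, $P_{\textup{even}}$ and expanding entrywise, or by rederiving the representation directly from the $\Spin(3)\cong SU(2)$ unit-quaternion parameterization outlined in the introduction. For $n=2$, where the factor $(n-2)$ degenerates and Proposition~\ref{prop:son2} ceases to apply, I would argue directly: $\conv\,SO(2)$ is the unit disk in the two-dimensional affine subspace of planar rotation matrices, and the stated $2\times 2$ LMI in~\eqref{eq:so2} describes this disk, since its determinant equals $(1+c)(1-c)-s^2=1-c^2-s^2$ and its diagonal entries are nonnegative whenever $|c|\leq 1$.

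I expect the main obstacle to be verifying the explicit quaternionic $4\times 4$ matrix for $n=3$: translating the abstract LMI $\sum_{i,j}A_{ij}[RX]_{ij}\nsd I_4$ into the concrete form shown in~\eqref{eq:so3} requires carefully matching the Clifford-algebra matrices $A_{ij}$ against a concrete quaternion parameterization of $SO(3)$, although the computation becomes mechanical once the correspondence is pinned down. By contrast, the general case reduces to essentially bookkeeping once Proposition~\ref{prop:son2} and Theorem~\ref{thm:sonpolar} are granted.
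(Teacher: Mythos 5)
Your plan for the general representation~\eqref{eq:son} is exactly the paper's: combine Proposition~\ref{prop:son2} with the operator-norm-ball LMI (Proposition~\ref{prop:on}) and the $SO^-(n)^\circ$ LMI obtained from Theorem~\ref{thm:sonpolar} by the substitution $Y\mapsto RY$. The way you handle the two special cases diverges mildly from the paper, though. For $n=3$ the paper simply invokes the second part of Proposition~\ref{prop:son2} (proved by noting $\PP_3 = {\PP_3^-}^\circ$) to conclude $\conv\,SO(3) = SO^-(3)^\circ$ outright; your SVD argument re-proves the containment $SO^-(3)^\circ \subseteq \conv\,O(3)$ from scratch, which is correct but duplicates work already encapsulated in that proposition. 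For $n=2$ the paper does something different from you: it treats~\eqref{eq:son} at $n=2$, observes that a trace-zero negative-semidefinite matrix must vanish (which forces $X$ to have the form $\left[\begin{smallmatrix} c & -s\\s & c\end{smallmatrix}\right]$), and then block-diagonalizes the resulting $4\times 4$ operator-norm LMI into two copies of the $2\times 2$ LMI in~\eqref{eq:so2}. Your direct argument — that~\eqref{eq:so2} describes the unit disk of planar rotation matrices and that this disk is $\conv\,SO(2)$ — is also correct, but it only establishes~\eqref{eq:so2}, not that~\eqref{eq:son} holds when $n=2$ nor that~\eqref{eq:so2} is a simplification of it, which is part of what the theorem asserts. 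Since $(n-2)=0$ makes the literal set-scaling in Proposition~\ref{prop:son2} degenerate (and ${\PP_2^-}^\circ$ is unbounded), this case merits a separate short verification that~\eqref{eq:son} holds at $n=2$; the paper's route of showing the second LMI in~\eqref{eq:son} already constrains $X$ to the rotation-like subspace accomplishes exactly that, and it is the one piece of the paper's argument your proposal is missing.
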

    We note that the representation of $\conv\,SO(3)$ described in Sanyal et
    al.~\cite[Proposition 4.1]{sanyal2011orbitopes} can be obtained from the
    spectrahedral representation for $\conv\,SO(3)$ given here by conjugating
    by a signed permutation matrix, establishing that the two representations
    are equivalent.  
    
    In Section~\ref{sec:lower} we prove that our spectrahedral representations
    in Theorems~\ref{thm:sonpolar},~\ref{thm:onpolar},~\ref{thm:son} are of
    minimum size. We do so by establishing lower bounds on the minimum size of
    spectrahedral representations of $SO(n)^\circ$, $\conv\,SO(n)$ and
    $O(n)^\circ$ that match the upper bounds given by our constructions.
    \begin{theorem}
        \label{thm:lower}
        If $n\geq 1$ the minimum size of a spectrahedral representation of
        $O(n)^\circ$ is $2^n$.
        If $n\geq 2$ the minimum size of a
        spectrahedral representation of $SO(n)^\circ$ is $2^{n-1}$.  If $n\geq
        4$ the minimum size of a spectrahedral representation of $\conv\,SO(n)$
        is $2^{n-1}+2n$. The minimum size of a spectrahedral representation of
        $\conv\,SO(3)$ is $4$. 
    \end{theorem}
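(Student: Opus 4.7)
The plan is to use the classical algebraic-boundary lower bound for spectrahedral representation size. If $C \subseteq \R^N$ is a full-dimensional spectrahedron containing the origin in its interior, written as $C = \{x : I_m + \sum_i x_i A_i \psd 0\}$, then the algebraic boundary $\partial_a C$ (the Zariski closure of $\partial C$) is contained in the hypersurface cut out by the degree $m$ polynomial $\det(I_m + \sum_i x_i A_i)$. In particular, the sum of the degrees of the distinct irreducible components of $\partial_a C$ is at most $m$. Each lower bound in the theorem will follow by identifying the algebraic boundary of the corresponding convex body and adding up component degrees.

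For $SO(n)^\circ$, our explicit spectrahedral representation (Theorem~\ref{thm:sonpolar}) shows $\partial_a SO(n)^\circ$ lies in the hypersurface cut out by $p_n(Y) = \det\bigl(I_{2^{n-1}} - \sum_{i,j} A_{ij} Y_{ij}\bigr)$, of degree $2^{n-1}$. To obtain a matching lower bound I would show that $p_n$ is irreducible and that its zero set is actually the algebraic boundary (not merely a superset), so that $\partial_a SO(n)^\circ$ is a single irreducible hypersurface of degree $2^{n-1}$. For $O(n)^\circ = SO(n)^\circ \cap SO^-(n)^\circ$, the same analysis applied to $SO^-(n)^\circ$ (obtained from $SO(n)^\circ$ by $Y \mapsto RY$) produces a second irreducible hypersurface of degree $2^{n-1}$. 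To get the bound $2^n$ I must further argue these two components are \emph{distinct}, which reduces to showing that $\conv\,SO(n) \neq \conv\,SO^-(n)$; this is immediate because, for instance, the identity lies in only one of the two sets.

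For $\conv\,SO(n)$, I would use the decomposition $\conv\,SO(n) = \conv\,O(n) \cap (n-2)SO^-(n)^\circ$ from~\eqref{eq:convson-intersection}. For $n \geq 4$, the algebraic boundary has two natural irreducible components: the classical hypersurface $\{X : \det(I_n - XX^T) = 0\}$ of degree $2n$ coming from the operator-norm boundary $\sigma_1(X) = 1$, and the image under $X \mapsto RX$ of $\partial_a SO(n)^\circ$, of degree $2^{n-1}$. Since these have distinct degrees for $n \geq 4$ they are distinct irreducible hypersurfaces, and I would verify that each contributes a full-dimensional portion of $\partial\,\conv\,SO(n)$ by exhibiting explicit boundary points where one constraint is active while the other is strict; the lower bound $2n + 2^{n-1}$ then follows. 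For $n = 3$ one has $n - 2 = 1$ and a short computation shows $SO^-(3)^\circ \subseteq \conv\,O(3)$, so $\conv\,SO(3) = SO^-(3)^\circ$; its algebraic boundary is the single irreducible quartic hypersurface of degree $4 = 2^{n-1}$ visible in~\eqref{eq:so3}.

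The main technical obstacle is establishing irreducibility of the Clifford-algebraic polynomial $p_n(Y) = \det(I_{2^{n-1}} - \sum_{i,j} A_{ij} Y_{ij})$ for all $n \geq 2$. My plan is to exploit the left-right $SO(n) \times SO(n)$ symmetry of $SO(n)^\circ$: the hypersurface $\{p_n = 0\}$ is invariant under this action, so any irreducible factor cuts out a subvariety whose union of Galois-conjugate translates is invariant, and the action is transitive enough on smooth boundary points (parameterized by $\Spin(n)$) to force any nontrivial factor to coincide with the full hypersurface. An alternative, and perhaps more hands-on route, is to specialize $Y$ along a well-chosen one-parameter family (e.g.\ $Y = tI$) and reduce irreducibility of $p_n$ to irreducibility of a known univariate polynomial arising from the Spin representation; the $n = 2, 3$ cases can in any event be verified directly from the small explicit matrices in~\eqref{eq:so2} and~\eqref{eq:so3}.
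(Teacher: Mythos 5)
Your overall framework (a degree-$m$ spectrahedral representation yields a degree-$m$ polynomial vanishing on the boundary, so the boundary's degree bounds $m$ from below) is sound and is in fact the paper's starting point, but your execution hinges on pinning down the exact degree of the algebraic boundary, and both routes you propose for the key irreducibility of $p_n(Y)=\det\bigl(I_{2^{n-1}}-\sum_{i,j}A_{ij}Y_{ij}\bigr)$ would fail. The univariate specialization cannot work: the restriction of $p_n$ to any real line is a univariate real polynomial of degree $2^{n-1}\geq 4$, hence never irreducible over $\R$ (and irreducibility over $\Q$ of a specialization is not the relevant notion, since you must exclude real/complex factorizations); for the natural choice $Y=tI$ one even gets $p_n(tI)=\prod_{\epsilon}\bigl(1-t\sum_i\epsilon_i\bigr)$, a product of linear factors with large multiplicities. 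The symmetry argument is also not "transitive enough": $\partial\,SO(n)^\circ$ is the level set $\{\tilde{\sigma}_1(Y)+\cdots+\tilde{\sigma}_n(Y)=1\}$, a continuum of $S(O(n)\times O(n))$-orbits indexed by the special singular value vector, and since the group is connected every irreducible component of an invariant hypersurface is automatically invariant, so a hypothetical factorization produces no contradiction. What you actually need---that every branch $\langle\epsilon,\tilde{\sigma}(Y)\rangle=1$ over even sign vectors $\epsilon$ lies in the Zariski closure of the Euclidean boundary, equivalently a monodromy/transitivity statement---is left unaddressed; the same issue recurs for the operator-norm component $\det(I_n-XX^T)$ in your $2^{n-1}+2n$ bound (only the branch $\sigma_1=1$ meets $\partial\,\conv\,O(n)$), and distinctness of the two components of $\partial_a O(n)^\circ$ needs a point lying on one hypersurface but not the other, since distinct convex bodies can share an algebraic boundary (your "$\conv\,SO(n)\neq\conv\,SO^-(n)$" is necessary but not sufficient).

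The paper sidesteps all of this with a much more elementary instance of the same degree principle: restrict the determinant polynomial of a purported size-$m$ representation to the subspace $\Diag$ of diagonal matrices. By Proposition~\ref{prop:diag} the diagonal slices of $O(n)^\circ$, $SO(n)^\circ$ and $\conv\,SO(n)$ are the cross-polytope, the polar of the parity polytope, and the parity polytope, with $2^n$, $2^{n-1}$, and $2^{n-1}+2n$ (for $n\geq 4$; $4$ for $n=3$) irredundant facets respectively, and any polynomial vanishing on the boundary of a polytope with $f$ irredundant facets has degree at least $f$ (one linear factor per facet hyperplane). This gives $m\geq f$ with no irreducibility, algebraic-boundary, or monodromy input whatsoever. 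If you wish to salvage your route you must supply the missing closure/irreducibility arguments for both $p_n$ and $\det(I_n-XX^T)$; the diagonal-slice restriction is the short way around them.
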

    \paragraph{Representations as \psdlifts{}}
    Given a spectrahedral representation of size $m$ of a convex set $C$ (with
    the origin in its interior), by applying a straightforward conic duality
    argument (see, for example, \cite[Proposition 3.1]{gouveia2011positive}) we
    can obtain a \psdlift{} of $C^\circ$.  This representation, however, is
    usually \emph{not} a spectrahedral representation. 

    \begin{example}
        \label{eg:onpolarproj}
    Theorems~\ref{thm:onpolar} and~\ref{thm:lower} tell us that the smallest
    spectrahedral representation of $O(n)^\circ$, the nuclear norm ball, has
    size $2^n$. Yet by dualizing the size $2n$ spectrahedral representation of
    $\conv\,O(n)$ (given in Proposition~\ref{prop:on} to follow) we obtain a
    \psdlift{} of $O(n)^\circ$ of size $2n$
    \begin{align}
        \label{eq:projon} O(n)^\circ &= 
 \left\{Z\in \R^{n\times n}: \exists X,Y \;\;\text{s.t.}\;\; 
    \begin{bmatrix} X &Z\\Z^T & Y\end{bmatrix} \psd 0,\; \textup{tr}(X) + \textup{tr}(Y) = 2\right\}.\nonumber
\end{align}
    This is equivalent to the representation given by Fazel~\cite{fazel2002matrix} for the nuclear norm ball. 
\end{example}
    
    By dualizing, in a similar fashion, the spectrahedral representation of
    $SO(n)^\circ$ we obtain a representation of $\conv\,SO(n)$ as the
    projection of a spectrahedron, i.e.\ a \psdlift{} of $\conv\,SO(n)$. 
   \begin{corollary}
       \label{cor:sonproj}
       The convex hull of $SO(n)$ can be expressed as a projection of the
       $2^{n-1}\times 2^{n-1}$ positive semidefinite matrices with trace one as
       \[ \conv\,SO(n) = \left\{\begin{bmatrix} \langle A_{11},Z\rangle & \langle A_{12},Z\rangle & \cdots & \langle A_{1n},Z\rangle\\
                   \langle A_{21},Z\rangle & \langle A_{22},Z\rangle & \cdots & \langle A_{2n},Z\rangle\\
                   \vdots & \vdots & \ddots & \vdots \\
                   \langle A_{n1},Z\rangle & \langle A_{n2},Z\rangle & \cdots & \langle A_{nn},Z\rangle\end{bmatrix}: Z \psd 0, \; \tr(Z) = 1\right\}.\]
   \end{corollary}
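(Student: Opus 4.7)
The plan is to dualize the spectrahedral representation of $SO(n)^\circ$ from Theorem~\ref{thm:sonpolar} using the standard correspondence between spectrahedra and \psdlifts{} (cf.~Proposition~3.1 of \cite{gouveia2011positive}, as referenced in Example~\ref{eg:onpolarproj}). Let $\mathcal{A}:\R^{n\times n}\to \Sym^{2^{n-1}}$ denote the linear map $\mathcal{A}(Y) = \sum_{i,j=1}^n A_{ij}Y_{ij}$, so that Theorem~\ref{thm:sonpolar} reads $SO(n)^\circ = \{Y : I - \mathcal{A}(Y)\psd 0\}$. Its adjoint with respect to the trace inner product is precisely the map $\mathcal{A}^*(Z) = [\langle A_{ij},Z\rangle]_{i,j}$ appearing in the statement of the corollary. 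Since $\conv\,SO(n)$ is compact and contains the origin, the bipolar theorem gives $\conv\,SO(n) = (SO(n)^\circ)^\circ$, so it suffices to identify the latter with $\{\mathcal{A}^*(Z) : Z\psd 0,\ \tr(Z)=1\}$.

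For the inclusion $\{\mathcal{A}^*(Z) : Z\psd 0,\ \tr(Z)=1\}\subseteq (SO(n)^\circ)^\circ$, I would give a direct one-line calculation: for any $Y\in SO(n)^\circ$ and any feasible $Z$, we have $\langle \mathcal{A}^*(Z),Y\rangle = \langle Z,\mathcal{A}(Y)\rangle \leq \langle Z,I\rangle = 1$, using that $I - \mathcal{A}(Y)\psd 0$ and $Z\psd 0$. Note that the right-hand side is convex and compact, being the image of a compact set under a linear map.

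For the reverse inclusion I would use SDP strong duality. The primal $\max_{Y}\langle x,Y\rangle$ subject to $I - \mathcal{A}(Y)\psd 0$ is strictly feasible at $Y=0$ (since $I\pd 0$) and bounded (since $\conv\,SO(n)$ is compact, equivalently $SO(n)^\circ$ has $0$ in its interior, which in turn follows because $SO(n)$ linearly spans $\R^{n\times n}$ for $n\geq 2$). Strong duality then yields
\[ \max_{Y\in SO(n)^\circ}\langle x,Y\rangle \;=\; \min\{\tr(Z):\ Z\psd 0,\ \mathcal{A}^*(Z)=x\}, \]
with both optima attained. Hence $x\in\conv\,SO(n)$ if and only if there exists $Z\psd 0$ with $\mathcal{A}^*(Z)=x$ and $\tr(Z)\leq 1$.

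The last step is to replace the inequality $\tr(Z)\leq 1$ by the equality $\tr(Z)=1$, which is what the statement asserts. Applying the already established forward inclusion to $x=0\in\conv\,SO(n)$ produces a nonzero $Z_0\psd 0$ with $\mathcal{A}^*(Z_0)=0$ and $\tr(Z_0)=1$; then replacing any feasible $Z$ with $Z + (1-\tr(Z))Z_0$ preserves positive semidefiniteness and the constraint $\mathcal{A}^*(Z)=x$ while normalising the trace to $1$. The only point requiring any care, and the likely main obstacle, is the verification that the hypotheses of SDP strong duality are indeed satisfied (in particular that $0$ lies in the interior of $\conv\,SO(n)$, so that $SO(n)^\circ$ is compact and the dual infimum is attained); beyond this the argument is entirely formal.
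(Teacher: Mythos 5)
Your overall route---dualizing the representation in Theorem~\ref{thm:sonpolar} by conic duality---is exactly what the paper intends (it gives no further details beyond citing the generic duality proposition), and your forward inclusion and strong-duality step are sound: Slater holds at $Y=0$, and for the relevant points $x\in\conv\,SO(n)=(SO(n)^\circ)^\circ$ the primal value is at most $1$ by the very definition of the bipolar, so compactness of $SO(n)^\circ$ is not needed. This matters because your stated justification of boundedness is wrong for $n=2$: $SO(2)$ spans only a two-dimensional subspace of $\R^{2\times 2}$, so $0$ is not interior to $\conv\,SO(2)$ and $SO(2)^\circ$ is unbounded; fortunately that claim is dispensable.

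The genuine flaw is the final normalization step, where you pass from $\tr(Z)\le 1$ to $\tr(Z)=1$. You justify the existence of $Z_0\psd 0$ with $\mathcal{A}^*(Z_0)=0$ and $\tr(Z_0)=1$ by ``applying the already established forward inclusion to $x=0$''. The forward inclusion only says that points of the form $\mathcal{A}^*(Z)$ with $\tr(Z)=1$ lie in $\conv\,SO(n)$; it says nothing about $0$ having a preimage of trace exactly one---that is an instance of the reverse inclusion you are in the middle of proving, so as written the step is circular (and the duality identity at $x=0$ only produces the useless $Z=0$). The needed fact is true but requires its own argument: for instance $Z_0=2^{-(n-1)}I_{2^{n-1}}$ works because every $A_{ij}$ has zero trace (for $i\ne j$ the signed permutation $-\lambda_i\rho_j$ moves every index, and $A_{ii}$ is diagonal with equally many $+1$'s and $-1$'s); alternatively, by Propositions~\ref{prop:spin} and~\ref{prop:mef} every $X\in SO(n)$ equals $\mathcal{A}^*(xx^T)$ for a unit vector $x\in\Spin(n)$, and averaging these rank-one matrices over $SO(n)$ yields such a $Z_0$. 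In fact this last observation proves the reverse inclusion outright---$SO(n)$ lies in the convex, compact right-hand side, hence so does $\conv\,SO(n)$---which would let you dispense with strong duality and the normalization issue altogether.
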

   In some situations it may be preferable to use this representation of
   $\conv\,SO(n)$ rather than the spectrahedral representation in
   Theorem~\ref{thm:son}.

\subsection{Related work}
\label{sec:related-work}
That the convex hull of $O(n)$ is a spectrahedron is a classical result.  It
was not until recently that Sanyal et al.~\cite{sanyal2011orbitopes}
established that $O(n)^\circ$ is a spectrahedron by explicitly giving a
(non-optimal) size $\binom{2n}{n}$ spectrahedral representation. In the same
paper, Sanyal et al.~study numerous $SO(n)$- and $O(n)$-orbitopes considering
both convex geometric aspects such as their facial structure and Caratheodory
number, and algebraic aspects such as their algebraic boundary and whether they
are spectrahedra.  They describe (previously known) spectrahedral
representations of $\conv\,SO(2)$ and $\conv\,SO(3)$. The representation for
$\conv\,SO(3)$ given in \cite[Eq.~4.1]{sanyal2011orbitopes} is equivalent to
our representation in Theorem~\ref{thm:son}, and the representation given in
\cite[Eq.~4.2]{sanyal2011orbitopes} is equivalent to 
\[\conv\,SO(3) = \left\{\left[\begin{smallmatrix}Z_{11}-Z_{22}-Z_{33}+Z_{44} & -2Z_{13}-2Z_{24} & -2Z_{12}+2Z_{34}\\
            2Z_{13}-2Z_{24} & Z_{11} + Z_{22} - Z_{33} - Z_{44} & -2Z_{14}-2Z_{23}\\
            2Z_{12}+2Z_{34} & 2Z_{14}-2Z_{23} & Z_{11}-Z_{22}+Z_{33} - Z_{44} \end{smallmatrix}\right]: Z \psd 0, \;\tr(Z)=1\right\}\]
which can be obtained by specializing Corollary~\ref{cor:sonproj}.  Sanyal et
al.~raise the general question of whether $\conv\,SO(n)$ is a spectrahedron for
all $n$ (which we answer in the affirmative), and more broadly ask for a
classification of the $SO(n)$-orbitopes that are spectrahedra. 

Earlier work on orbitopes in the context of convex geometry includes the work
of Barvinok and Vershik \cite{barvinok1988convex} who consider orbitopes of
finite groups in the context of combinatorial optimization, Barvinok and
Blekherman \cite{barvinok2005convex}, who used asymptotic volume computations
to show that there are many more non-negative polynomials than sums of squares
(among other things), and  Longinetti et al.~\cite{longinetti2010convex} who
studied $SO(3)$-orbitopes with a view to applications in protein structure
determination. More recently Sinn~\cite{sinn2013algebraic} has studied in
detail the algebraic boundary of four-dimensional $SO(2)$-orbitopes as well as
the Barvinok-Novik orbitopes.

\subsection{Notation}
\label{sec:notation}
In this section we gather notation not explicitly defined elsewhere in the paper. 
We use $\Sym^m$ and $\Sym_+^m$ to denote the space of symmetric $m\times m$ matrices
and the cone of positive semidefinite matrices respectively. If $\mathcal{U}\subset \R^n$ is a 
subspace then $\projto{\mathcal{U}}:\R^n\rightarrow \mathcal{U}$ is the orthogonal projector
onto $\mathcal{U}$ and $\projfrom{\mathcal{U}}:\mathcal{U}\rightarrow \R^n$ is its adjoint.
If the subspace in question is the subspace of diagonal matrices $\mathcal{D}\subset \R^{n\times n}$ we occasionally also use $\diag := \projto{\mathcal{D}}$
and $\diag^* := \projfrom{\mathcal{D}}$. We frequently use the matrix $R = \diag^*(1,1,\ldots,1,-1)\in \R^{n \times n}$. It could be 
replaced, throughout, by any orthogonal self-adjoint matrix with determinant $-1$. We use the shorthand $[n]$ for the set $\{1,2,\ldots,n\}$
and $\Ieven$ for the set of subsets of $[n]$ with even cardinality.

    \subsection{Outline}
    \label{sec:outline}
    The remainder of the paper is organized as follows. In Section~\ref{sec:app} we describe a problem in satellite attitude
    estimation that can be reformulated as a semidefinite program using the ideas in this paper. 
     Section~\ref{sec:properties}
     focuses on the symmetry properties of $\conv\,SO(n)$ and $\conv\,O(n)$, as well as certain convex polytopes
     that naturally arise when studying these convex bodies.
    With these preliminaries established, Section~\ref{sec:reps} outlines the main arguments required to establish the 
    correctness of the spectrahedral representations
    of $SO(n)^\circ$, $O(n)^\circ$, $\conv\,SO(n)$ and $\conv\,O(n)$. Details of some of the constructions
    required for these arguments are deferred to Appendix~\ref{app:clif}. Section~\ref{sec:lower}
    establishes lower bounds on the size of spectrahedral representations of $SO(n)^\circ$, $O(n)^\circ$, 
    $\conv\,SO(n)$ and $\conv\,O(n)$ as well as a lower bound on the size of equivariant \psdlifts{} of $\conv\,SO(n)$. 

    Many of the properties of the convex bodies of interest in this paper are summarized in Table~\ref{tab:data} which may 
    serve as a useful navigational aid when reading the paper.
\begin{table}
    \ra{1.5}
    \begin{tabular}{@{}clllll@{}}\toprule
        $S$ &   & $SO(n) $ & $O(n)$ \\\midrule
            & Definition & $\{X\in \R^{n\times n}: X^TX = I,\,\det(X)=1\}$&$\{X\in \R^{n\times n}: X^TX = I\}$\\[0.2cm]
        $S^\circ$&&$SO(n)^\circ$ & $O(n)^\circ =$ Nuclear norm ball &\\\midrule
  & Diagonal slice & Polar of parity polytope\hfill(Prop.~\ref{prop:diag}) & Cross-polytope\hfill(Prop.~\ref{prop:diag})\\
                 &Spectrahedral & Size: $2^{n-1}$\hfill(Thm~\ref{thm:sonpolar}) & Size: $2^n$\hfill (Thm~\ref{thm:onpolar})\\[-0.2cm]
                     & representation & Matching lower bound\hfill(Thm~\ref{thm:lower}) & Matching lower bound\hfill(Thm~\ref{thm:lower}) \\
                &\psdlift{} & Size: $2^{n-1}$ & Size: $2n$\hfill (Eg.~\ref{eg:onpolarproj}) \\
        ${S^{\circ}}^{\circ}=$ & & &\\[-0.2cm]
        $\conv\,S$ & &$\conv\,SO(n)$ & $\conv\,O(n)=$ Operator norm ball &\\\midrule
        & Diagonal slice & Parity Polytope \hfill(Prop.~\ref{prop:diag})& Hypercube\hfill(Prop.~\ref{prop:diag})\\
        &\parbox{0pt}{Spectrahedral\\representation} & Size: $\begin{cases} 2^{n-1} + 2n & n\geq 4\\4 & n=3\end{cases}$\hfill(Thm~\ref{thm:son}) & Size: $2n$\hfill (Prop.~\ref{prop:on})\\[-0.2cm]
            & & Matching lower bound\hfill(Thm~\ref{thm:lower}) & Matching lower bound \hfill(Thm~\ref{thm:lower})\\ 
      & \psdlift{} &  Size: $ 2^{n-1}$\hfill (Cor~\ref{cor:sonproj}) & Size: $2n$\\
        \bottomrule
    \end{tabular}
    \caption{\label{tab:data} Summary of results related to the convex bodies considered in the paper.}
\end{table}

\section{An illustrative application---joint satellite attitude and spin-rate estimation}
\label{sec:app}

In this section we discuss a problem in satellite attitude estimation that can
be reformulated as semidefinite programs using the representation of
$SO(n)^\circ$ described in Section~\ref{sec:results}. Our aim, here, is to give
a concrete example of situations where the semidefinite representations we
describe in this paper arise naturally.  The problem of interest is one of
estimating the attitude (i.e.~orientation) and spin-rate of a spinning
satellite, and is a slight generalization of a problem posed recently by Psiaki
\cite{psiaki2009generalized}. We first focus on describing  the basic attitude
estimation problem in Section~\ref{sec:attitude}, before describing the joint
attitude and spin-rate estimation problem in Section~\ref{sec:joint}.

\subsection{Attitude estimation}
\label{sec:attitude}
The \emph{attitude} of a satellite is the element of $SO(3)$ that transforms a
reference coordinate system (the \emph{inertial system}) in which, say, the sun
is fixed, into a local coordinate system fixed with respect to the satellite's
body (the \emph{body system}).  We are given unit vectors $x_1,x_2,\ldots,x_T$
(e.g., the alignment of the Earth's magnetic field, directions of landmarks
such as the sun or other stars, etc.) in the inertial coordinate system, and
noisy measurements $y_1,y_2,\ldots,y_T$ of these directions in the body
coordinate system. Let $Q\in SO(3)$ denote the unknown attitude of the
satellite. The aim is to estimate (in the maximum likelihood sense) $Q$ given
the $y_k$, the $x_k$ and a description of the measurement noise. 

The simplest noise model assumes that each $y_k$ is independent has a von
Mises-Fisher distribution \cite{mardia2009directional} (a natural family of
probability distributions on the sphere) with mean $Qx_k$ and concentration
parameter $\kappa$ i.e.\ its probability density function is, up to a
proportionality constant that does not depend on $Q$,  $p(y_k;Q) \propto
\exp\left(\kappa\langle y_k,Qx_k\rangle\right)$.
Then the maximum likelihood estimate of $Q$ is found by solving 
\begin{equation}
    \label{eq:wahba}
    \max_{Q\in SO(3)}\sum_{k=1}^{T} \kappa \langle y_k,Qx_k\rangle = \max_{Q\in SO(3)} \langle Q,\kappa\sum_{k=1}^{T}y_kx_k^T\rangle = \max_{Q\in \conv\,SO(3)}
    \langle Q,\kappa\sum_{k=1}^{T}y_kx_k^T\rangle.
\end{equation}
This is a probabilistic interpretation of a problem known as \emph{Wahba's
problem} in the astronautical literature, posed by Grace Wahba in the July 1965
\emph{SIAM Review} problems and solutions section \cite[Problem 65-1]{wahba1965least}.

Our spectrahedral representation of $\conv\,SO(n)$  allows us to express the
optimization problem in~\eqref{eq:wahba} as a semidefinite program. In the
astronautical literature it is common to solve this problem via the 
$q$-method \cite{keat1977analysis} which involves parameterizing $SO(3)$ in
terms of unit quaternions and solving a symmetric eigenvalue problem. Our
semidefinite programming-based formulation could be thought of as a much more
flexible generalization of this eigenvalue problem-based approach that works
for any $n$, not just the case $n=3$. 

\subsection{Joint attitude and spin-rate estimation}
\label{sec:joint}

A significant benefit of having a semidefinite programming-based description of
a problem (such as Wahba's problem), is that it often allows us to devise
semidefinite programming-based solutions to more complicated related problems
by composing semidefinite representations in different ways.  An example of
this is given by the following generalization of Wahba's problem posed by 
Psiaki~\cite{psiaki2009generalized}.\footnote{Psiaki's formulation only
considers the $\kappa_2=0$ case, where measurements of the spin rate are not
considered.}

Consider a satellite rotating at a constant unknown angular velocity $\omega$
rad/sample around a known axis (e.g.~its major axis).  Assume the body
coordinate system is chosen so that the rotation is around the axis defined by
the first coordinate direction. Then the attitude matrix at the $k$th sample
instant is of the form
\[ Q(k) = \begin{bmatrix} 1 & 0 & 0\\0 & \cos(k \omega) & -\sin(k \omega)\\ 
0 & \sin(k \omega) & \cos(k \omega)\end{bmatrix}Q\]
where $Q\in SO(3)$ is the initial attitude.  Suppose, now, the satellite
\emph{sequentially} obtains measurements $y_0,y_1,\ldots,y_T$ in the body
coordinate system of known landmarks in the directions $x_0,x_1,\ldots,x_T$ in
the inertial coordinate system.  As before assume that the $y_k$ are
independent and have von Mises-Fisher distribution with mean $Q(k)x_k$ and
concentration parameter $\kappa_1$. Furthermore, the satellite obtains a
sequence $\omega_1,\omega_2,\ldots,\omega_T$ of noisy measurements of the
unknown constant spin rate $\omega$. Suppose the $\omega_k$ are independent and
each $\omega_k$ has a von Mises distribution \cite{mardia2009directional} (a
natural distribution for angular-valued quantities) with mean $\omega$ and
concentration parameter $\kappa_2$, i.e., its probability density function (up
to a constant independent of $\omega$) is given by $p(\omega_k;\omega) \propto
\exp\left(\kappa_2\cos(\omega_k - \omega)\right)$.  If the $\omega_k$ and the
$y_k$ are independent then the maximum likelihood estimate of $Q$ and $\omega$
can be found by solving
\begin{equation}
    \label{eq:psiaki}\max_{\substack{Q\in SO(3)\\\omega\in [0,2\pi)}} \sum_{k=0}^{T}\left\langle y_k,\kappa_1
        \left[\begin{smallmatrix} 1 & 0 & 0\\0 & \cos(k\omega) & -\sin(k\omega)\\
        0& \sin(k\omega ) & \cos(k\omega )\end{smallmatrix}\right]
        Qx_k\right\rangle + \kappa_2\sum_{k=0}^T \cos(\omega_k - \omega).
\end{equation}

Note that the optimization problem~\eqref{eq:psiaki} can be rewritten as
\begin{equation}
    \label{eq:psiaki2}
    \max_{\substack{Q\in SO(3)\\\omega\in[0,2\pi)}} a_1\cos(\omega) + b_1\sin(\omega) + \langle A_0,Q\rangle + 
        \sum_{k=1}^{T}\langle A_k,\cos(k\omega)Q\rangle + \langle B_k,\sin(k\omega)Q\rangle,
        \end{equation}
        i.e.~the maximization of a linear functional over
        \[ \mathcal{M}_{3,T} =
            \left\{(\cos(\omega),\sin(\omega),Q,\cos(\omega)Q,\sin(\omega)Q,\ldots,\cos(T\omega)Q,\sin(T\omega)Q):
            Q\in SO(3),\;\omega\in [0,2\pi)\right\}.\]
            We can reformulate this as a semidefinite program if we have a
            \psdlift{} of $\conv(\mathcal{M}_{3,T})$, because the optimization
            problem~\eqref{eq:psiaki2} is equivalent to the maximization of the
            same linear functional over $\conv(\mathcal{M}_{3,T})$.  Using the
            fact that $SO(n)^\circ$ has a spectrahedral representation of size
            $2^{n-1}$, it can be shown that that $\conv(\mathcal{M}_{n,T})$ has
            a \psdlift{} of size $2^{n-1}(T+1)$. Describing this in detail is
            beyond the scope of the present paper.  Instead we discuss this
            reformulation in further detail in a separate
            report~\cite{saunderson2014}.

\section{Basic properties of $\conv\,SO(n)$ and $\conv\,O(n)$}
\label{sec:properties}

In this section we consider the convex bodies $\conv\,SO(n)$ and $\conv\,O(n)$
purely from the point of view of convex geometry leaving discussion of aspects
related to their semidefinite representations for Section~\ref{sec:reps}. In
this section we describe their symmetries, and how the full space of
$\R^{n\times n}$ matrices decomposes with respect to these symmetries, via the
(special) singular value decomposition.  To a large extent one can characterize
$\conv\,SO(n)$ and $\conv\,O(n)$ in terms of their intersections with the
subspace of diagonal matrices.  These diagonal sections are well known
polytopes---the parity polytope and the hypercube respectively. The properties
of these diagonal sections are crucial to establishing our spectrahedral
representation of $\conv\,SO(n)$ in Section~\ref{sec:son} and the lower bounds
on the size of spectrahedral representations given in Section~\ref{sec:lower}. 

All of the results in this section are (sometimes implicitly) in the literature
in various forms. Here we aim for a brief yet unified presentation to make the
paper as self-contained as possible. 

\subsection{Symmetry and the special singular value decomposition}
\label{sec:symmetry}
In this section we describe the symmetries of $\conv\,O(n)$ and $\conv\,SO(n)$.

The group $O(n)\times O(n)$ acts on $\R^{n\times n}$ by $(U,V)\cdot X = UXV^T$.
This action leaves the set $O(n)$ invariant, and hence leaves the convex bodies
$\conv\,O(n)$ and $O(n)^\circ$ invariant. It is also useful to understand how
the ambient space of $n\times n$ matrices decomposes under this group action.
Indeed by the well-known singular value decomposition every element $X\in
\R^{n\times n}$ can be expressed as $X = U\Sigma V^T = (U,V)\cdot \Sigma$ where
$(U,V)\in O(n)\times O(n)$, and $\Sigma$ is diagonal with $\Sigma_{11} \geq
\cdots \geq \Sigma_{nn} \geq 0$. These diagonal elements are the \emph{singular
values}.  We denote them by $\sigma_i(X) = \Sigma_{ii}$. Note that for most of
what follows, we only use the fact that $\Sigma$ is diagonal, not that its
elements can be taken to be non-negative and sorted.

Similarly the group
\[ S(O(n)\times O(n)) = \{(U,V): U,V\in O(n),\; \det(U)\det(V) = 1\}\]
acts on $\R^{n\times n}$ by $(U,V)\cdot X = UXV^T$. This action leaves the sets
$SO(n)$ and $SO^-(n)$ invariant, and hence leaves the convex bodies
$\conv\,SO(n)$, $\conv\,SO^-(n)$, $SO(n)^\circ$, $SO^-(n)^\circ$, $\conv\,O(n)$
and $O(n)^\circ$ invariant. A variant on the singular value decomposition,
known as the \emph{special singular value
decomposition}~\cite{sanyal2011orbitopes} describes how the space of $n\times
n$ matrices decomposes under this group action. Indeed every $X\in \R^{n\times
n}$ can be expressed as $X = U\tilde{\Sigma}V^T = (U,V)\cdot \tilde{\Sigma}$
where $(U,V)\in S(O(n)\times O(n))$ and $\tilde{\Sigma}$ is diagonal with
$\tilde{\Sigma}_{11} \geq \cdots \geq \tilde{\Sigma}_{n-1,n-1} \geq
|\tilde{\Sigma}_{nn}|$. These diagonal elements are the \emph{special singular
values}. We denote them by $\tilde{\sigma}_i(X) = \tilde{\Sigma}_{ii}$. Again
in what follows we typically only use the fact that $\tilde{\Sigma}$ is
diagonal for our arguments. 

The special singular value decomposition can be obtained from the singular
value decomposition.  Suppose $X = U\Sigma V^T$ is a singular value
decomposition of $X$ so that $(U,V)\in O(n)\times O(n)$.  If $\det(U)\det(V) =
1$ this is also a valid special singular value decomposition. Otherwise, if
$\det(U)\det(V) = -1$ then $X = UR(R\Sigma)V^T$ gives a decomposition where
$(UR,V)\in S(O(n)\times O(n))$ and $R\Sigma$ is again diagonal, but with the
last diagonal entry being negative. As such the singular values and special
singular values of an $n\times n$ matrix are related by $\sigma_i(X) =
\tilde{\sigma}_i(X)$ for $i=1,2,\ldots,n-1$ and $\tilde{\sigma}_n(X) =
\textup{sign}(\det(X))\sigma_n(X)$. 

The importance of these decompositions of $\R^{n\times n}$ under the action of
$O(n)\times O(n)$ and $S(O(n)\times O(n))$ is that they allow us to reduce many
arguments, by invariance properties, to arguments about diagonal matrices. 

\subsection{Polytopes associated with $\conv\,O(n)$ and $\conv\,SO(n)$}
The convex hull of $O(n)$ is closely related to the \emph{hypercube} 
\begin{equation}
    \label{eq:cube1}
    \Cube_n =  \conv\{x\in \R^n: x_i^2 = 1,\;\;\text{for $i\in [n]$}\};
\end{equation}
the convex hull of $SO(n)$ is closely related to the \emph{parity polytope} 
\begin{equation}
    \label{eq:pp1}
    \PP_n = \conv\{x\in \R^n: \textstyle{\prod_{i=1}^{n}x_i = 1},\;\;x_i^2 = 1,\;\;\text{for $i\in [n]$}\};
\end{equation}
the convex hull of $SO^-(n)$ is closely related to the \emph{odd parity polytope}
\begin{equation}
    \label{eq:ppminus1}
    \PP_n^- = \conv\{x\in \R^n: \textstyle{\prod_{i=1}^{n}x_i = -1}.\;\;x_i^2 = 1,\;\;\text{for $i\in [n]$}\}.
\end{equation}
In this section we briefly discuss these  polytopes as well as showing that they are the diagonal
sections of $\conv\,O(n)$, $\conv\,SO(n)$ and $\conv\,SO^-(n)$ respectively.

\paragraph{Facet descriptions}
Irredundant descriptions of $\Cube_n$ and $\PP_n$ in terms of linear
inequalities are well known \cite{jeroslow1975defining}.  The hypercube has
$2n$ facets corresponding to the linear inequality description
\begin{equation}
    \label{eq:cubef}
    \Cube_n = \{x\in \R^n: -1 \leq x_i \leq 1\;\text{for $i\in [n]$} \}.
\end{equation}
For $n\geq 4$ the parity polytope $\PP_n$ has $2n+2^{n-1}$ facets corresponding
to the linear inequality description
\begin{equation}
    \label{eq:ppnf}
     \PP_n = \bigg\{x\in \R^n: -1\leq x_i \leq 1\;\text{for $i\in [n]$},\;\sum_{i\notin I}x_i - \sum_{i\in I}x_i \leq n-2\;\;\text{for $I\subseteq [n]$, $|I|$ odd}\bigg\}.
\end{equation}
In the cases $n=2$ and $n=3$ this description simplifies to
\begin{align}
    \PP_2 & = \left\{\left[\begin{smallmatrix} x\\x\end{smallmatrix}\right]\in \R^2: -1\leq x \leq 1\right\}\label{eq:pp2f}\\
    \PP_3 & =\left\{x\in \R^3: x_1-x_2+x_3\leq 1,\;-x_1+x_2+x_3\leq 1,\right.\nonumber\\
          &\qquad\qquad\quad\;\,\left.x_1+x_2-x_3\leq 1,\;-x_1-x_2-x_3\leq 1\right\}\label{eq:pp3f}
\end{align}
showing that $\PP_3$ has only four facets.

The polar of the hypercube is the \emph{cross-polytope}. We denote it by
$\Cube_n^\circ$. It is clear from \eqref{eq:cube1} that $\Cube_n^\circ$ has
$2^n$ facets and corresponding linear inequality description
\begin{equation}
    \label{eq:cubepolarf}
    \Cube_n^\circ = \bigg\{x\in \R^n: \sum_{i\notin I}x_i - \sum_{i\in I}x_i \leq 1 \quad\text{for $I\subset [n]$}\bigg\}. 
\end{equation}
The polar of the parity polytope is denoted by $\PP_n^\circ$. It is clear from
\eqref{eq:pp1} that $\PP_n^\circ$ has $2^{n-1}$ facets and corresponding linear
inequality description
\begin{equation}
    \label{eq:pppolarf}
    \PP_n^\circ = \bigg\{x\in \R^n: \sum_{i\notin I} x_i - \sum_{i\in I} x_i \leq 1\quad\text{for $I\subset [n]$, $|I|$ even}\bigg\}.
\end{equation}
Similarly 
\begin{equation}
    \label{eq:ppnegpolarf}
    {\PP_n^-}^\circ = \bigg\{x\in \R^n: \sum_{i\notin I} x_i - \sum_{i\in I}x_i \leq 1 \quad\text{for $I\subseteq [n]$, $|I|$ odd}\bigg\}.
    \end{equation}
To get a sense of the importance of these polytopes for understanding
$\conv\,SO(n)$  it may be instructive to compare  \eqref{eq:pp2f} with
\eqref{eq:so2}, \eqref{eq:pp3f} with \eqref{eq:so3}, \eqref{eq:ppnf} with
\eqref{eq:son}, and \eqref{eq:pppolarf} with \eqref{eq:sonpolar}.

We conclude the discussion of these polytopes with a useful alternative
description of $\PP_n$.

\begin{lemma}
    \label{lem:ppalt}
    The parity polytope can be expressed as 
    \[ \PP_n = \Cube_n \cap (n-2) \cdot {\PP_n^-}^\circ.\]
    In the case $n=3$ this simplifies to $\PP_3 = {\PP_3^-}^\circ$.
\end{lemma}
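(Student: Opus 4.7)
The plan is to prove the lemma by directly matching the $H$-descriptions already recorded in the paper. For $n \geq 4$, the irredundant facet description of $\PP_n$ in \eqref{eq:ppnf} splits naturally into two groups: the cube constraints $-1 \leq x_i \leq 1$ for $i \in [n]$, and the odd-parity constraints $\sum_{i \notin I} x_i - \sum_{i \in I} x_i \leq n-2$ for $I \subseteq [n]$ with $|I|$ odd. My plan is first to observe, via \eqref{eq:cubef}, that the first group is exactly the inequality description of $\Cube_n$; and second, via \eqref{eq:ppnegpolarf} scaled through by $n-2$, that the second group is exactly the inequality description of $(n-2)\cdot{\PP_n^-}^\circ$ (scaling a full-dimensional polytope containing the origin by $\lambda > 0$ multiplies each facet right-hand side by $\lambda$). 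Intersecting the two groups then yields $\PP_n = \Cube_n \cap (n-2){\PP_n^-}^\circ$ for $n \geq 4$.

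For $n=3$ the same comparison applies, except that $(n-2){\PP_3^-}^\circ = {\PP_3^-}^\circ$ and the cube constraints become redundant. To handle this case I would explicitly check that the cube constraints follow from the four odd-parity inequalities listed in \eqref{eq:pp3f}. Writing these as
\begin{align*}
\text{(a)}\;\; x_1 - x_2 + x_3 &\leq 1, & \text{(b)}\;\; -x_1 + x_2 + x_3 &\leq 1, \\
\text{(c)}\;\; x_1 + x_2 - x_3 &\leq 1, & \text{(d)}\;\; -x_1 - x_2 - x_3 &\leq 1,
\end{align*}
I would note that pairwise sums of the first three yield $2x_i \leq 2$ for each coordinate, while summing any one of (a),(b),(c) with (d) yields $-2x_j \leq 2$. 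Cycling through the choices establishes $-1 \leq x_i \leq 1$ for all $i$, so the cube constraints add nothing and $\PP_3 = {\PP_3^-}^\circ$ follows directly, matching the stated special case.

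There is no serious obstacle here: the argument is essentially bookkeeping against the inequality descriptions already displayed in the paper. The only subtleties are keeping the sign convention $\sum_{i \notin I} - \sum_{i \in I}$ consistent throughout, and performing the small redundancy check that collapses the $n=3$ case to ${\PP_3^-}^\circ$.
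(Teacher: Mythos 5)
Your proposal is correct and follows essentially the same route as the paper, which also proves the lemma by comparing the facet descriptions \eqref{eq:cubef}, \eqref{eq:ppnf}, and \eqref{eq:ppnegpolarf} for $n\geq 4$ and handles $n=3$ by comparing \eqref{eq:pp3f} with \eqref{eq:ppnegpolarf}. Your explicit derivation of the cube inequalities from the four odd-parity inequalities in the $n=3$ case is just a more detailed version of the redundancy observation the paper leaves implicit.
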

\begin{proof}
For the general case, we need only examine the facet descriptions
in~\eqref{eq:cubef},~\eqref{eq:ppnf}, and~\eqref{eq:ppnegpolarf}.  In the case
$n=3$ the result follows by comparing \eqref{eq:pp3f} with
\eqref{eq:ppnegpolarf}.
\end{proof}

\paragraph{Diagonal projections and sections}
We now establish the link between the hypercube and the convex hull of $O(n)$,
and the parity polytope and the convex hull of $SO(n)$. First we prove a result
that says that the subspace $\Diag$ of diagonal matrices interacts particularly
well with these convex bodies. The theorem applies for the convex bodies
$\conv\,O(n)$, $\conv\,SO(n)$ and $\conv\,SO^-(n)$ because whenever $g$ is a
diagonal matrix with non-zero entries in $\{-1,1\}$ (a diagonal sign matrix)
then each of these convex bodies is invariant under the conjugation map $X
\mapsto gXg^T$. 
\begin{lemma}
    \label{lem:symfix}
    Let $C\subset \R^{n\times n}$ be a convex body that is invariant under conjugation by diagonal sign matrices. 
    Then $\projto{\Diag}(C) = \projto{\Diag}(C\cap \Diag)$ and 
    $[\projto{\Diag}(C\cap \Diag)]^\circ = \projto{\Diag}(C^\circ \cap \Diag)$.
\end{lemma}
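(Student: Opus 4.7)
The plan is to prove both equalities by exploiting the group-averaging argument afforded by the diagonal sign group and then combining it with the standard polar--projection duality for subspaces. Let $G = \{\diag^*(\epsilon) : \epsilon \in \{-1,+1\}^n\}$ be the group of $2^n$ diagonal sign matrices. A direct computation shows that for any $X \in \R^{n\times n}$ and $g = \diag^*(\epsilon) \in G$, the entries of $gXg^T$ are $(gXg^T)_{ij} = \epsilon_i \epsilon_j X_{ij}$. Averaging over $G$ kills every off-diagonal entry (since $\sum_{\epsilon}\epsilon_i\epsilon_j = 0$ when $i \neq j$) and preserves every diagonal entry, so
\[
    \frac{1}{|G|}\sum_{g \in G} gXg^T = \projto{\Diag}(X).
\]

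For the first equality, the inclusion $\projto{\Diag}(C\cap \Diag) \subseteq \projto{\Diag}(C)$ is immediate. For the reverse, I take $X \in C$ and use invariance of $C$ under conjugation by each $g \in G$ together with convexity to conclude that the group average $\projto{\Diag}(X)$ lies in $C$; since it is also diagonal, it lies in $C \cap \Diag$. This actually gives the slightly stronger identity $\projto{\Diag}(C) = C \cap \Diag$, which I will use below.

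For the second equality, the key observation is that since diagonal sign matrices are orthogonal and self-adjoint, the assignment $X \mapsto gXg^T$ is self-adjoint with respect to the trace inner product; hence $C^\circ$ is also invariant under conjugation by $G$. Applying the first part to $C^\circ$ yields $\projto{\Diag}(C^\circ \cap \Diag) = \projto{\Diag}(C^\circ) = C^\circ \cap \Diag$, while applying it to $C$ gives $\projto{\Diag}(C\cap \Diag) = C \cap \Diag$. It therefore suffices to show $[C \cap \Diag]^\circ = C^\circ \cap \Diag$, where the polar on the left is taken inside $\Diag$. This is the standard subspace polar--projection duality: if $W$ is a subspace and $K \subseteq \R^{n\times n}$ is a convex body containing the origin in its interior, then $y \in W$ satisfies $\langle y, \projto{W}(x)\rangle = \langle y, x\rangle$ for all $x$, so $y \in [\projto{W}(K)]^\circ_W$ if and only if $\langle y, x\rangle \leq 1$ for all $x \in K$, i.e.\ $y \in K^\circ \cap W$. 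Taking $K = C$ and $W = \Diag$ gives $[\projto{\Diag}(C)]^\circ = C^\circ \cap \Diag$, which combined with the two identifications above closes the argument.

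There is no real technical obstacle here; the only thing to be careful about is keeping track of where the polars are being computed (inside $\Diag$ versus inside all of $\R^{n\times n}$) and verifying that the hypothesis of invariance under conjugation by $G$ does pass to $C^\circ$, both of which follow from the self-adjointness and orthogonality of the elements of $G$.
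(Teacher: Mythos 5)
Your proof is correct and follows essentially the same route as the paper: the same group-averaging argument over the diagonal sign matrices for the first identity, and the same observation that for $y\in\Diag$ one has $\langle y,\projto{\Diag}(x)\rangle=\langle y,x\rangle$, so that polarity inside $\Diag$ matches $C^\circ\cap\Diag$. The only difference is your detour through the $G$-invariance of $C^\circ$, which is harmless but unnecessary, since $\projto{\Diag}(C^\circ\cap\Diag)=C^\circ\cap\Diag$ holds trivially because that set already consists of diagonal matrices.
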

\begin{proof}
    We first establish that $\projto{\Diag}(C) = \projto{\Diag}(C\cap \Diag)$.
    Note that clearly $\projto{\Diag}(C\cap \Diag)\subseteq \projto{\Diag}(C)$.
    For the reverse inclusion let $G$ denote the group of diagonal sign
    matrices  and observe that $\Diag$ is the subspace of $n\times n$ matrices
    fixed pointwise by the conjugation action of diagonal sign matrices. Then
    for any $X\in C$ the projection onto $\Diag$, the fixed point subspace, is
    \[ \projtofrom{\Diag}(X) =  \frac{1}{2^n}\sum_{g\in G}gXg^T\] which gives a
    description of $\projtofrom{\Diag}(X)$ as a convex combination of the
    $gXg^T$. Each $gXg^T\in C$ since  $C$ is invariant under conjugation by
    diagonal sign matrices. Hence $\projtofrom{\Diag}(X)\in C\cap \Diag$ and so
    $\projto{\Diag}(X)\in \projto{\Diag}(C\cap \Diag)$. 

    Now we  establish that $[\projto{\Diag}(C\cap \Diag)]^\circ =
    \projto{\Diag}(C^\circ \cap \Diag)$.  For any $y\in \Diag$ we have that 
    \[ \max_{x\in \projto{\Diag}(C\cap \Diag)} \langle y,x\rangle = \max_{x\in \projto{\Diag}(C)}\langle y,x\rangle 
        = \max_{z\in C}\langle y,\projto{\Diag}(z)\rangle = \max_{z\in C}\langle \projfrom{\Diag}(y),z\rangle.\]
        Hence $y\in [\projto{\Diag}(C\cap \Diag)]^\circ$ if and only if $\projfrom{\Diag}(y)\in C^\circ$, or, equivalently,
        $y\in \projto{\Diag}(C^\circ \cap\Diag)$.
    \end{proof}
    We note that this lemma generalizes to the situation where $C$ is a convex
    body invariant under the action of a compact group and the subspace $\Diag$
    is replaced with the fixed point subspace of the group action. 

    The key fact that relates the parity polytope and the convex hull of
    $SO(n)$ is the following celebrated theorem of Horn~\cite{horn1954doubly}.
\begin{theorem}[Horn]
    \label{thm:horn}
    The projection onto the diagonal of $SO(n)$ is the parity polytope, i.e.\
    $\projto{\Diag}(SO(n)) = \PP_n$. 
\end{theorem}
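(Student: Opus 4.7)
I would prove the two inclusions $\projto{\Diag}(SO(n)) \subseteq \PP_n$ and $\PP_n \subseteq \projto{\Diag}(SO(n))$ separately, since they have rather different flavors.

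For $\projto{\Diag}(SO(n)) \subseteq \PP_n$, the plan is to verify that the diagonal of every $X \in SO(n)$ satisfies the facet inequalities~\eqref{eq:ppnf}. The box inequalities $|X_{ii}| \leq 1$ are immediate from the fact that each column of $X$ is a unit vector. For a subset $I \subseteq [n]$ with $|I|$ odd, let $D_I = \diag^*(\epsilon)$ where $\epsilon_i = -1$ for $i\in I$ and $\epsilon_i = +1$ otherwise. Then $D_I X \in O(n)$ with $\det(D_I X) = -1$. The eigenvalues of any real orthogonal matrix lie on the unit circle and consist of complex-conjugate pairs $e^{\pm i\theta}$ together with real eigenvalues $\pm 1$; writing $b$ for the number of eigenvalues equal to $-1$, the determinant condition forces $b$ to be odd, hence $b \geq 1$. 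Each conjugate pair contributes at most $2$ to the trace, so $\tr(D_I X) \leq n - 2b \leq n - 2$, which is exactly the inequality $\sum_{i \notin I} X_{ii} - \sum_{i \in I} X_{ii} \leq n - 2$.

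For the reverse inclusion $\PP_n \subseteq \projto{\Diag}(SO(n))$, I would argue by induction on $n$. The base case $n = 2$ is direct: any element of $SO(2)$ has diagonal $(\cos\theta, \cos\theta)$, which traces out $\PP_2 = \{(c,c) : c \in [-1,1]\}$ exactly. For the inductive step with $n \geq 3$ and $p \in \PP_n$, I would first handle the case where some coordinate $p_k$ equals $\pm 1$. In that case any candidate $X \in SO(n)$ with $X_{kk} = p_k$ must have $Xe_k = p_k e_k$ and $e_k^T X = p_k e_k^T$, so $X$ block-decomposes with a $1 \times 1$ block equal to $p_k$ and an $(n-1) \times (n-1)$ block in $SO(n-1)$ (if $p_k = +1$) or $SO^-(n-1)$ (if $p_k = -1$), at which point one invokes the inductive hypothesis together with its analog $\projto{\Diag}(SO^-(n-1)) = \PP_{n-1}^-$ (obtained by multiplying on the right by $R$). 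The complementary coordinates of $p$ land in the correct polytope by a short argument based on the vertex structure of $\PP_n$ versus $\PP_{n-1}^{\pm}$. If instead all $p_k \in (-1,1)$, I would apply a plane rotation in a chosen coordinate plane $(i,j)$ to drive one diagonal entry to $\pm 1$ while keeping the remaining diagonal inside the appropriate lower-dimensional parity polytope, reducing to the previous case.

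The principal obstacle is the plane-rotation reduction in this last subcase. Given $p \in \PP_n$ with all $p_k \in (-1,1)$, one must identify indices $(i,j)$ and a rotation angle $\phi$ so that post-rotation the diagonal lands on a facet of $\PP_n$ where some coordinate equals $\pm 1$, while the entire deformation stays inside $\PP_n$. Verifying this rigorously requires a careful analysis of which facet inequalities in~\eqref{eq:ppnf} are active or become active during the deformation, and tracking determinant signs to ensure the final block decomposition lands in the correct component ($SO(n-1)$ versus $SO^-(n-1)$); this combinatorial tracking, together with the parity constraints linking $\PP_n$ to $\PP_{n-1}$ and $\PP_{n-1}^-$, is the technical core of the argument.
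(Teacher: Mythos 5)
Your forward inclusion $\projto{\Diag}(SO(n)) \subseteq \PP_n$ is correct and complete: the box constraints from unit columns, plus the observation that $D_I X$ has determinant $-1$ and hence an odd number of eigenvalues equal to $-1$, giving $\tr(D_I X) \leq n-2$, does verify every inequality in~\eqref{eq:ppnf}. For comparison, the paper does not prove this statement at all: it is quoted as Horn's classical theorem with a citation to Horn (1954), and the paper immediately remarks that only the weaker corollaries $\projto{\Diag}(\conv SO(n)) = \PP_n$ and $\projto{\Diag}(\conv SO^-(n)) = \PP_n^-$ are actually used. Note that this weaker statement already follows from what you have fully proved: your trace argument gives $\projto{\Diag}(SO(n)) \subseteq \PP_n$, the even diagonal sign matrices lie in $SO(n)$ and hit every vertex of $\PP_n$, and projection commutes with taking convex hulls.

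However, as a proof of the stated theorem (the exact equality $\projto{\Diag}(SO(n)) = \PP_n$, with no convex hull on the left), your proposal has a genuine gap, and you acknowledge it yourself: the entire content of the hard direction $\PP_n \subseteq \projto{\Diag}(SO(n))$ rests on the plane-rotation reduction in the case where every $|p_k| < 1$, and that step is only described, not carried out. Concretely, you never specify how to choose the plane $(i,j)$ and the angle $\phi$, never prove that the deformed diagonal stays inside $\PP_n$ throughout (or at least lands in it at the terminal angle), and never verify that driving one entry to $\pm 1$ leaves the remaining entries in $\PP_{n-1}$ or $\PP_{n-1}^-$ as the parity bookkeeping requires; moreover, the logic of the reduction needs care, since achievability of the deformed diagonal must be transported back to the original point $p$, which requires the deformation to be realized by an explicit rotation acting on a matrix realizing the reduced diagonal, not merely a path of points in the polytope. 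This interpolation-and-parity argument is precisely the substance of Horn's theorem, so as it stands the proposal proves the easy inclusion and the statement the paper actually uses, but not the theorem as stated.
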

Note that we do not need the full strength of Horn's theorem. We only use the
corollaries that 
\begin{align}
    \projto{\Diag}(\conv\,SO(n)) & = \conv\,\projto{\Diag}(SO(n)) = \conv\,\PP_n = \PP_n\quad\text{and}\label{eq:horn1}\\
    \projto{\Diag}(\conv\,SO^-(n)) & = \projto{\Diag}(R\cdot \conv\,SO(n)) = R\cdot\projto{\Diag}(\conv\,SO(n)) = R\cdot \PP_n = \PP_n^-\label{eq:horn2}.
\end{align}
We are now in a position to establish the main result of this section. 
\begin{proposition}
    \label{prop:diag}
    Let $\Diag\subset \R^{n\times n}$ denote the subspace of diagonal matrices. Then
    \begin{align}
        \projto{\Diag}(\Diag \cap \conv\,O(n)) & = \Cube_n, \;\;\;\,\;\;\;\;\;\;\projto{\Diag}(\Diag \cap O(n)^\circ) = \Cube_n^\circ,\nonumber\\
        \projto{\Diag}(\Diag \cap \conv\,SO(n)) & = \PP_n, \;\;\;\;\,\projto{\Diag}(\Diag\cap SO(n)^\circ) = \PP_n^\circ,\nonumber\\
        \projto{\Diag}(\Diag \cap \conv\,SO^-(n)) & = \PP_n^-, \;\;\projto{\Diag}(\Diag\cap SO^-(n)^\circ) = {\PP_n^-}^\circ.\nonumber
    \end{align}
\end{proposition}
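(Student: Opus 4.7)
The plan is to derive all six identities uniformly by combining Lemma~\ref{lem:symfix} with the corollaries~\eqref{eq:horn1} and~\eqref{eq:horn2} of Horn's theorem. To apply the lemma I first need to verify that each of the six convex bodies is invariant under conjugation by diagonal sign matrices. For a diagonal sign matrix $g$ we have $g^T = g^{-1}$, so $X \mapsto gXg^T$ preserves both $X^T X = I$ and $\det(X)$, hence leaves each of $O(n)$, $SO(n)$, and $SO^-(n)$ invariant as a set; taking convex hulls and polars preserves this invariance.

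Next I would handle the three convex-hull identities together. Applying the first part of Lemma~\ref{lem:symfix} reduces $\projto{\Diag}(\Diag \cap \conv\,S)$ to $\projto{\Diag}(\conv\,S) = \conv\,\projto{\Diag}(S)$. For $S = SO(n)$ and $S = SO^-(n)$ this is exactly~\eqref{eq:horn1} and~\eqref{eq:horn2}, producing $\PP_n$ and $\PP_n^-$ respectively. For $S = O(n) = SO(n) \cup SO^-(n)$ we get $\conv(\PP_n \cup \PP_n^-)$, and I would observe that the vertex sets of $\PP_n$ and $\PP_n^-$ are the even- and odd-parity elements of $\{-1,1\}^n$, so their union is all of $\{-1,1\}^n$, whose convex hull is $\Cube_n$ by~\eqref{eq:cube1}. (One also has $\PP_n, \PP_n^- \subset \Cube_n$, so there is no overshoot.)

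For the three polar identities I would then apply the second part of Lemma~\ref{lem:symfix} to the same three convex hulls, using that $(\conv\,S)^\circ = S^\circ$ for any bounded set $S$. This gives
\[
\projto{\Diag}(\Diag \cap SO(n)^\circ) = [\projto{\Diag}(\Diag \cap \conv\,SO(n))]^\circ = \PP_n^\circ,
\]
and identically $\Cube_n^\circ$ in the $O(n)$ case and ${\PP_n^-}^\circ$ in the $SO^-(n)$ case.

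No step presents a genuine obstacle: the invariance check is a one-line calculation, Horn's theorem is invoked as a black box in the form already recorded in~\eqref{eq:horn1}--\eqref{eq:horn2}, and everything else follows mechanically from Lemma~\ref{lem:symfix}. The only mildly nontrivial step is verifying $\conv(\PP_n \cup \PP_n^-) = \Cube_n$ for the $O(n)$ case, which reduces immediately to the observation that together the two parity polytopes have all of $\{-1,1\}^n$ as vertices.
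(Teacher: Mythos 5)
Your proposal is correct and follows the same route as the paper: verify the conjugation invariance, apply both parts of Lemma~\ref{lem:symfix}, use the corollaries~\eqref{eq:horn1} and~\eqref{eq:horn2} of Horn's theorem to identify the diagonal projections of $\conv\,SO(n)$ and $\conv\,SO^-(n)$, and observe that $\conv(\PP_n\cup\PP_n^-)=\Cube_n$ for the $O(n)$ case. You simply spell out a few steps the paper leaves implicit, such as the one-line invariance check and the identity $(\conv\,S)^\circ = S^\circ$ used to match the lemma's hypotheses to the polar statements.
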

\begin{proof}
    First note that by~\eqref{eq:horn1} and~\eqref{eq:horn2} we know that
    $\projto{\Diag}(\conv\,SO(n)) = \PP_n$ and that
    $\projto{\Diag}(\conv\,SO^-(n)) = \PP_n^-$. Consequently 
    \[ \projto{\Diag}(\conv\,O(n)) = \conv\,\projto{\Diag}(SO(n)\cup SO^-(n)) = \conv\,(\PP_n \cup \PP_n^-) = \Cube_n.\]   

    Since each of $\conv\,O(n)$, $\conv\,SO(n)$, $\conv\,SO^-(n)$ is invariant
    under conjugation by diagonal sign matrices we can apply
    Lemma~\ref{lem:symfix}. Doing so and using the characterization of the
    diagonal projections of each of these convex bodies from the previous
    paragraph, completes the proof.
\end{proof}

\section{Spectrahedral representations of $SO(n)^\circ$ and $\conv\,SO(n)$}
\label{sec:reps}
This section is devoted to outlining the proofs of
Theorems~\ref{thm:sonpolar},~\ref{thm:onpolar} and~\ref{thm:son}, giving
spectrahedral representations of $SO(n)^\circ$ $O(n)^\circ$ and $\conv\,SO(n)$.
For the sake of exposition, we initially focus on $SO(2)^\circ$ as in this case
all the ideas are familiar.  Low dimensional coincidences do mean that some
issues are simpler in the $2\times 2$ case than in general.  After discussing
the $2\times 2$ case, in Section~\ref{sec:gen} we generalize the argument,
deferring some details to Appendix~\ref{app:clif}. Finally in
Section~\ref{sec:son} we construct our spectrahedral representation of
$\conv\,SO(n)$.

\subsection{The $2\times 2$ case}

We begin by giving a spectrahedral representations of $SO(2)^{\circ}$.  
We make crucial use of the trigonometric identities
$\cos(\theta) = \cos^2(\theta/2) - \sin^2(\theta/2)$ and $\sin(\theta) =
2\cos(\theta/2)\sin(\theta/2)$.  Recall that elements of $SO(2)$ have the form 
\[ \begin{bmatrix} \cos(\theta) & -\sin(\theta)\\\sin(\theta) & \cos(\theta)\end{bmatrix} = 
\begin{bmatrix} 
    \cos^2(\textstyle{\frac{\theta}{2}}) - \sin^{2}(\textstyle{\frac{\theta}{2}}) & 
-2\cos(\textstyle{\frac{\theta}{2}})\sin(\textstyle{\frac{\theta}{2}})\\
2\cos(\textstyle{\frac{\theta}{2}})\sin(\textstyle{\frac{\theta}{2}}) & 
\cos^2(\textstyle{\frac{\theta}{2}}) - \sin^{2}(\textstyle{\frac{\theta}{2}})\end{bmatrix}
\]
and that $(\cos(\theta/2),\sin(\theta/2))$ parameterizes the unit circle in
$\R^2$. Hence $SO(2)$ is the image of the unit
circle $\{(x_1,x_2): x_1^2+x_2^2=1\}$ under the quadratic map
\[ Q(x_1,x_2) = \begin{bmatrix} x_1^2 -x_2^2 & -2x_1x_2\\2x_1x_2 & x_1^2 - x_2^2\end{bmatrix}.\]
As such, $Y\in SO(2)^{\circ}$ if and only if, for all $(x_1,x_2)$ in the unit
circle,
\begin{align*}
    \langle Y,Q(x_1,x_2)\rangle & = 
\left\langle \begin{bmatrix} Y_{11} & Y_{12}\\Y_{21} & Y_{22}\end{bmatrix},
\begin{bmatrix} x_1^2 - x_2^2 & -2x_1x_2\\2x_1x_2 & x_1^2 - x_2^2\end{bmatrix}\right\rangle\\  
& = \begin{matrix}\begin{bmatrix}x_1 & x_2\end{bmatrix}\\~\end{matrix}
\begin{bmatrix} Y_{11} + Y_{22} & Y_{21} - Y_{12}\\Y_{21} - Y_{12} & -Y_{11} - Y_{22}\end{bmatrix}
\begin{bmatrix} x_1\\x_2\end{bmatrix} \leq 1.
\end{align*}
This is equivalent to the spectrahedral representation
\[ SO(2)^\circ = \left\{Y: \begin{bmatrix} Y_{11} + Y_{22} & Y_{21} -
Y_{12}\\Y_{21} - Y_{12} &-Y_{11} - Y_{22}\end{bmatrix} \nsd I \right\}\]
which coincides with the $n=2$ case of Theorem~\ref{thm:sonpolar}.

    To summarize, the main idea of the argument is that we use a
    parameterization of $SO(2)$ as the image of the unit circle under a
    quadratic map. This parameterization allows us to rewrite the maximum of a
    linear functional on $SO(2)$ as the maximum of a quadratic form on the unit
    circle which can be expressed as a spectrahedral condition.

    We note that a very similar argument works in the case $n=3$ to directly
    produce the representations of $SO(3)^\circ$ and $\conv\,SO(3)$ in
    Theorem~\ref{thm:sonpolar} and Corollary~\ref{cor:sonproj} respectively.
    Indeed the unit quaternion parameterization of rotations gives a
    parameterization of $SO(3)$ as the image of the unit sphere in $\R^4$ under
    a quadratic mapping again allowing us to rewrite the maximum of a linear
    functional on $SO(3)$ as the maximum of a quadratic form on the unit sphere
    which is equivalent to a spectrahedral condition.
    
    \subsection{Outline of the general argument}
    \label{sec:gen}
    
    For the general case, we first need a quadratic parameterization of
    $SO(n)$.  There is a classical construction of a quadratic map
    $Q:\R^{2^{n-1}}\rightarrow \R^{n\times n}$ and a subset $\Spin(n)$ of the
    unit sphere in $\R^{2^{n-1}}$ such that $SO(n) = Q(\Spin(n))$.  (We recall
    this construction in Appendix~\ref{app:clif}, only discussing those aspects
    relevant for our argument here.)
    
    Unfortunately, for $n\geq 4$, $\Spin(n)$ is a \emph{strict} subset of the
    unit sphere in $\R^{2^{n-1}}$, so we cannot simply follow the argument for
    the $n=2$ case verbatim. The key difficulty is that we need a spectrahedral
    characterization of the maximum \emph{over $\Spin(n)$} of the quadratic
    form $x\mapsto \langle Y,Q(x)\rangle$ (for arbitrary $Y$). It is not
    obvious how to do this when $\Spin(n)$ is a strict subset of the sphere.

    We achieve this by showing that for any $Y$, the maximum of the quadratic
    form $x\mapsto \langle Y,Q(x)\rangle$ over the entire sphere coincides with
    its maximum over the strict subset $\Spin(n)$ of the sphere (see
    Proposition~\ref{prop:eig}, to follow).  To establish this we exploit
    additional structure in $\Spin(n)$ and certain equivariance properties of
    $Q$.  The specific properties we use are stated in
    Propositions~\ref{prop:spin},~\ref{prop:basis},~\ref{prop:mef},
    and~\ref{prop:spinequivariance}. We prove these in Appendix~\ref{app:clif}. 
    \begin{proposition}
        \label{prop:spin}
        There is a $2^{n-1}$-dimensional inner product space, $\Cl^0(n)$, 
        a subset $\Spin(n)$ of the unit sphere in $\Cl^0(n)$ and a quadratic map 
        $Q:\Cl^0(n)\rightarrow \R^{n\times n}$ such that $Q(\Spin(n)) = SO(n)$. 
    \end{proposition}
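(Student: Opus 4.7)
The plan is to construct $\Cl^0(n)$ as the even part of the Clifford algebra of $\R^n$, take $\Spin(n)$ to be the multiplicative group generated by products of even numbers of unit vectors, and define $Q$ via twisted conjugation of $\R^n$.

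First I would build the Clifford algebra $\Cl(n)$ as the associative unital algebra generated by symbols $e_1,\ldots,e_n$ subject to $e_i^2 = -1$ and $e_ie_j = -e_je_i$ for $i\neq j$. A basis is given by the ordered products $e_{i_1}e_{i_2}\cdots e_{i_k}$ with $i_1<\cdots<i_k$, so $\dim \Cl(n) = 2^n$. The \emph{even} subspace $\Cl^0(n)$, spanned by those basis elements with $k$ even, is closed under multiplication and has dimension $2^{n-1}$. I would equip $\Cl^0(n)$ with the inner product making this basis orthonormal. I also introduce the reversal anti-involution $a\mapsto a^*$ that reverses the order of products of generators; it preserves $\Cl^0(n)$, and for any $v = \sum_i v_i e_i$ viewed inside $\Cl(n)$ one computes $-vv^* = \sum_i v_i^2 = \|v\|^2$.

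Next I would set $\Spin(n) \subseteq \Cl^0(n)$ to be the set of products $u_1 u_2 \cdots u_{2k}$ of an even number of unit vectors $u_i \in \R^n \subset \Cl(n)$. Such a product lies in $\Cl^0(n)$ and is a unit vector because its norm squared equals the scalar part of $s^*s$, which multiplicativity and $u_i^*u_i = 1$ reduce to $1$. Hence $\Spin(n)$ sits inside the unit sphere of $\Cl^0(n)$. I would then define $Q:\Cl^0(n)\to \R^{n\times n}$ by letting $Q(s)$ be the matrix of the linear map $v\mapsto svs^*$ on $\R^n$. Three facts need checking: (i) for $s\in \Cl^0(n)$ and $v\in \R^n$, the element $svs^*$ again lies in $\R^n$, which follows from the parity grading together with the defining commutation relations; (ii) the entries of $Q(s)$ are quadratic forms in the coordinates of $s$, which is immediate from the bilinearity of Clifford multiplication and the linearity of $s\mapsto s^*$; and (iii) for a unit vector $u\in \R^n$, the map $v\mapsto -uvu = uvu^*$ is the reflection of $v$ through the hyperplane perpendicular to $u$, which is a short direct computation.

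The heart of the argument is the equality $Q(\Spin(n)) = SO(n)$. For the inclusion $Q(\Spin(n)) \subseteq SO(n)$, multiplicativity of $s\mapsto (v\mapsto svs^*)$ combined with (iii) shows that if $s = u_1\cdots u_{2k}\in \Spin(n)$ then $Q(s)$ is a composition of $2k$ hyperplane reflections, hence orthogonal with determinant $(-1)^{2k} = +1$. For the reverse inclusion I would invoke the Cartan--Dieudonn\'e theorem, which guarantees that every element of $O(n)$ is a product of at most $n$ hyperplane reflections; an element of $SO(n)$ must use an even number of reflections and therefore lies in the image of $Q$ restricted to $\Spin(n)$.

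The main obstacle is keeping sign and parity bookkeeping consistent: choosing a convention for $e_i^2$, distinguishing $svs^{-1}$ from $svs^*$ (they agree on $\Spin(n)$ because $s^*s = 1$ there, but only the latter is globally defined and quadratic in $s$), and verifying carefully that twisted conjugation preserves $\R^n\subset \Cl(n)$. Once these conventions are pinned down, the dimension count, the quadraticity of $Q$, and the reflection identity are all direct algebraic computations, and Cartan--Dieudonn\'e supplies surjectivity onto $SO(n)$.
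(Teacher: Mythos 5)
Your overall plan is the same as the paper's: even Clifford subalgebra $\Cl^0(n)$ of dimension $2^{n-1}$, $\Spin(n)$ as products of an even number of unit vectors, a quadratic map given by twisted conjugation of $\R^n\subset\Cl(n)$, the reflection identity, and Cartan--Dieudonn\'{e} for surjectivity onto $SO(n)$. However, your step~(i) is false as stated: for a general $s\in\Cl^0(n)$ and $v\in\R^n$, the element $svs^*$ need \emph{not} lie in $\R^n$. The parity grading only places $svs^*$ in the odd subspace $\Cl^1(n) = \textup{span}\{e_I : |I|\ \textup{odd}\}$, which strictly contains $\R^n$ once $n\geq 3$. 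A concrete failure occurs for $n\geq 5$: take $s = e_1e_2 + e_3e_4\in\Cl^0(n)$ (so $s^* = -s$) and $v = e_5$. Since $e_5$ commutes with $e_1e_2$ and with $e_3e_4$, one computes $svs^* = -s^2 e_5 = (2 - 2\,e_1e_2e_3e_4)\,e_5 = 2e_5 - 2\,e_1e_2e_3e_4e_5$, which has a nonzero grade-five component and so is not in $\R^n$. Hence $s\mapsto (v\mapsto svs^*)$ does not define a map $\Cl^0(n)\rightarrow\R^{n\times n}$, which is precisely what the proposition requires.

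The repair is the one the paper uses: define $Q(s)$ to be the matrix of $v\mapsto\projto{\R^n}(svs^*)$, composing with the orthogonal projection onto $\R^n\subset\Cl(n)$. This is still quadratic in $s$, and it coincides with your map on $\Spin(n)$ (and on the Clifford group generally), where twisted conjugation genuinely preserves $\R^n$; hence the reflection identity and Cartan--Dieudonn\'{e} argument are unaffected. The small amount of extra work is in verifying that the projected map still satisfies $Q(su)=Q(s)Q(u)$ and $Q(us)=Q(u)Q(s)$ for $u\in S^{n-1}$, which uses that twisted conjugation by a unit vector preserves both $\R^n$ and its orthogonal complement inside $\Cl(n)$. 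With that correction and a consistent choice of anti-involution, your argument is complete.
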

    Given an orthonormal basis $e_1,\ldots,e_n$ for $\R^n$ there is a
    corresponding orthonormal basis $(e_I)_{I\in \Ieven}$ for $\Cl^0(n)$
    indexed by $\Ieven$, the subsets of $\{1,2,\ldots,n\}$ of even cardinality.
    This basis has the following important property.
    \begin{proposition}
        \label{prop:basis}
        Each of the $2^{n-1}$ elements of the basis $(e_I)_{I\in \Ieven}$ is in $\Spin(n)$.
    \end{proposition}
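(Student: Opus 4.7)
The plan is to argue directly from the standard construction of $\Spin(n)$ inside the Clifford algebra $\Cl(n)$ as the multiplicative subgroup generated by products of an even number of unit vectors in $\R^n \subset \Cl(n)$. Under this description, $\Spin(n) \subseteq \Cl^0(n)$ automatically, and each such element has Clifford norm one (hence lies on the unit sphere of $\Cl^0(n)$), because the Clifford quadratic form restricts to the Euclidean inner product on $\R^n$.

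First I would recall how the basis $(e_I)_{I \in \Ieven}$ is built from the given orthonormal basis $e_1, \ldots, e_n$ of $\R^n$: for $I = \{i_1 < i_2 < \cdots < i_{2k}\} \in \Ieven$ one sets $e_I = e_{i_1} e_{i_2} \cdots e_{i_{2k}}$, with $e_\emptyset = 1$. Since each $e_i$ is a unit vector of $\R^n$ sitting inside $\Cl(n)$, the element $e_I$ is by construction a product of $2k$ unit vectors, hence lies in $\Spin(n)$. The case $I = \emptyset$ is just the identity element, which is in $\Spin(n)$ trivially (as the empty product).

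The only potential obstacle is reconciling this with the precise conventions used in Appendix~\ref{app:clif}. If $\Spin(n)$ is instead packaged as the set of $x \in \Cl^0(n)$ with $x \conj{x} = 1$ whose (twisted) conjugation action $v \mapsto x v x^{-1}$ preserves $\R^n$, one checks the two conditions separately. The norm condition $e_I \conj{e_I} = \pm e_{i_1} \cdots e_{i_{2k}} e_{i_{2k}} \cdots e_{i_1} = \pm 1$ follows by repeated use of $e_i \conj{e_i} = 1$, and invariance of $\R^n$ under conjugation by $e_I$ holds because each $e_i$ implements a reflection of $\R^n$ in the hyperplane $e_i^\perp$, so an even product of such reflections lies in $SO(n)$ and in particular maps $\R^n$ into $\R^n$. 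No deep difficulty arises; the content of the proposition is essentially that the basis $(e_I)_{I \in \Ieven}$ is the set of products of distinct generators, and $\Spin(n)$ is built precisely so that such products (of even length) belong to it.
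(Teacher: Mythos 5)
Your argument is correct and is essentially the paper's own proof: the paper defines $\Spin(n)$ as the set of even-length products of elements of $S^{n-1}$, and its proof of this proposition is exactly your observation that $e_I = e_{i_1}e_{i_2}\cdots e_{i_{2k}}$ exhibits $e_I$ as such a product (your extra verification under the alternative ``$x\conj{x}=\cliffid$ and conjugation preserves $\R^n$'' definition is unnecessary here but harmless). The only cosmetic point is the case $I=\emptyset$: since the paper's definition requires a positive number of factors, one should note $\cliffid = (-e_1)e_1$ rather than appeal to an empty product.
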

    For the rest of this section we fix these two choices of basis for $\R^n$
    and $\Cl^0(n)$ respectively. With respect to the basis $(e_{I})_{I\in
    \Ieven}$ we write $x\in \Cl^0(n)$ in coordinates as $x = \sum_{I\in \Ieven}
    x_I e_I$.  The following result summarizes the properties of $Q(x)$, with
    respect to these choices of basis, that we use.  
   \begin{proposition}
       \label{prop:mef}
    The matrix entry functions $[Q(\cdot)]_{ij}:\Cl^0(n)\rightarrow \R$ are
    explicitly given by
    \[ [Q(x)]_{ij} = \langle x,A_{ij}x\rangle = \sum_{I,J\in \Ieven}[A_{ij}]_{I,J} x_I x_J\]
    where the $A_{ij}$ are the signed permutation matrices defined
    in~\eqref{eq:AB}.  In particular, the quadratic forms $[Q(x)]_{ii}$ are
    diagonal with respect to the basis $(e_I)_{I\in \Ieven}$. 
    \end{proposition}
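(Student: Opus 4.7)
The plan is to identify the three building blocks of $A_{ij}$ with natural operations on the Clifford algebra $\Cl(n)$, and then unwind $\langle x, A_{ij} x\rangle$ into a Clifford-algebraic expression that matches $[Q(x)]_{ij}$. Concretely, after identifying $\R^{2^n}$ with $\Cl(n)$ via the basis $(e_K)_{K\subseteq [n]}$ (encoding subsets as binary strings), I would verify that (i) $\lambda_i$ is the matrix of left Clifford multiplication by $e_i$; (ii) $\rho_j$ is the matrix of right Clifford multiplication by $e_j$; and (iii) $P_{\textup{even}}$ is the isometric embedding $\Cl^0(n)\hookrightarrow \Cl(n)$ whose image is the $+1$-eigenspace of $\Rtwo\otimes \cdots \otimes \Rtwo$ (which coincides with $\Cl^0(n)$ since this operator acts on $e_K$ by $(-1)^{|K|}$). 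All three are direct calculations: for (i), the $\Rtwo$ factors in the first $i-1$ slots produce the anticommutation signs from sliding $e_i$ past earlier factors of $e_K$, the central $\begin{bmatrix} 0 & -1\\1 & 0\end{bmatrix}$ flips slot $i$ while supplying the $-1$ from $e_i^2=-1$, and the identities in slots $>i$ reflect the absence of further anticommutations; (ii) is analogous with ``before $i$'' and ``after $i$'' interchanged.

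Once these identifications are in hand, I would recall from the construction of $\Spin(n)$ in Appendix~\ref{app:clif} that $Q$ is given by $Q(x)v = xv\bar x$ for $v\in \R^n$ (identified with the grade-one part of $\Cl(n)$ via $v=\sum_i v_i e_i$), where $\bar{\cdot}$ denotes the Clifford involution defining $\Spin(n)$. The inner product on $\Cl(n)$ that makes $(e_K)$ orthonormal has the form $\langle a,b\rangle = \tau(\bar a b)$ for a cyclic scalar trace $\tau$, so using $\bar e_i = -e_i$ one computes
\[
[Q(x)]_{ij} \;=\; \langle e_i,\, x e_j \bar x\rangle \;=\; -\tau(e_i x e_j \bar x) \;=\; -\tau(\bar x e_i x e_j) \;=\; -\langle x,\, e_i x e_j\rangle.
\]
On the other hand, by the definition of $A_{ij}$ and the identification of $P_{\textup{even}}$ as inclusion,
\[
\langle x, A_{ij}x\rangle \;=\; -\langle P_{\textup{even}} x,\, \lambda_i\rho_j P_{\textup{even}} x\rangle \;=\; -\langle x,\, e_i x e_j\rangle_{\Cl(n)},
\]
and matching the two expressions gives the claimed identity.

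For the diagonal statement, observe that $\lambda_i\rho_i$ implements the map $z\mapsto e_i z e_i$ on $\Cl(n)$. For any basis element $e_K$, the product $e_i e_K e_i$ equals $\pm e_K$ (the sign being determined by whether $i\in K$, together with the necessary anticommutations and $e_i^2=-1$); hence $\lambda_i\rho_i$ is diagonal in the basis $(e_K)$, and therefore so is $A_{ii}=-P_{\textup{even}}^T\lambda_i\rho_i P_{\textup{even}}$ in the induced basis $(e_I)_{I\in \Ieven}$. The main obstacle throughout the argument is really just sign bookkeeping: three sources of signs (anticommutation in $\Cl(n)$, the relation $e_i^2=-1$, and the Clifford conjugation used to express cyclicity of the trace) must be tracked consistently. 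Once the conventions from Appendix~\ref{app:clif} are pinned down, each individual verification is mechanical.
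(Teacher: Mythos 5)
Your proposal is correct and follows essentially the same route as the paper's proof in Appendix~A.4: both reduce to the identity $[\tildeQ(x)]_{ij}=\langle e_i,xe_j\conj{x}\rangle=-\langle x,\lambda_{e_i}\rho_{e_j}x\rangle$ and then restrict to $\Cl^0(n)$ via $P_{\textup{even}}$. The only cosmetic difference is bookkeeping: the paper uses the adjoint relations~\eqref{eq:leftadj}--\eqref{eq:rightadj} for $\lambda_x^*$ and $\rho_x^*$, whereas you package the same computation as cyclicity of the trace form $\tau(\bar a b)$; your explicit verification that $\lambda_i\rho_i$ is diagonal (via $e_i e_K e_i = \pm e_K$) spells out a step the paper leaves implicit.
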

    Finally, $Q$ interacts well with left and right multiplication by elements of $SO(n)$. 
    \begin{proposition}
        \label{prop:spinequivariance}
        For any $U,V\in SO(n)$ there is a linear map
        $\Phi_{(U,V)}:\Cl^0(n)\rightarrow \Cl^0(n)$ such that
        \begin{itemize}
            \item $UQ(x)V^T = Q(\Phi_{(U,V)}x)$ for all $x\in \Cl^0(n)$
            \item $\Phi_{(U,V)}$ is invertible
            \item $\Phi_{(U,V)}$ and $\Phi_{(U,V)}^{-1}$ preserve $\Spin(n)$, i.e., $\Phi_{(U,V)}(\Spin(n)) = \Phi_{(U,V)}^{-1}(\Spin(n)) = \Spin(n)$. 
        \end{itemize}
\end{proposition}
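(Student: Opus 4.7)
The plan is to define $\Phi_{(U,V)}$ via Clifford multiplication by lifts of $U$ and $V$ in $\Spin(n)$, then verify the equivariance identity by an algebraic computation using the Clifford-algebraic description of $Q$ developed in Appendix~\ref{app:clif}.

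Since the restriction $Q|_{\Spin(n)}\colon \Spin(n)\to SO(n)$ is the classical spin double cover, hence surjective, I would first choose $u, v\in \Spin(n)$ with $Q(u)=U$ and $Q(v)=V$, and define
\[ \Phi_{(U,V)}(x) := u\cdot x\cdot v^{-1}, \]
where $\cdot$ denotes Clifford multiplication and $v^{-1}$ is the Clifford-algebra inverse (which exists since $\Spin(n)$ consists of units). This map is $\R$-linear by bilinearity of the Clifford product, and takes values in $\Cl^0(n)$ because $u, v^{-1}\in \Cl^0(n)$ and $\Cl^0(n)$ is a subalgebra of $\Cl(n)$. Invertibility is immediate from the formula $\Phi_{(U,V)}^{-1}(y)=u^{-1}\cdot y\cdot v$, and both $\Phi_{(U,V)}$ and its inverse preserve $\Spin(n)$ because $\Spin(n)$ is a multiplicative group containing $u$ and $v$.

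The equivariance $UQ(x)V^T = Q(\Phi_{(U,V)}x)$ is easy on the subset $\Spin(n)$: for $x\in \Spin(n)$ the homomorphism property of $Q|_{\Spin(n)}$ gives
\[ Q(uxv^{-1}) = Q(u)\,Q(x)\,Q(v)^{-1} = U\,Q(x)\,V^T, \]
using $V^{-1}=V^T$. The main obstacle is extending this quadratic-in-$x$ identity from $\Spin(n)$ to all of $\Cl^0(n)$. For $n\geq 4$, $\Spin(n)$ is a proper real submanifold of $\Cl^0(n)$ of dimension $\binom{n}{2}<2^{n-1}$, and although $\Spin(n)$ contains the basis $(e_I)_{I\in \Ieven}$ by Proposition~\ref{prop:basis}, the polarization identity for quadratic forms requires sums like $e_I+e_J$ to also lie in $\Spin(n)$, which generically fails.

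To close the gap I would avoid the polarization detour and instead verify the identity directly from the Clifford-algebraic description of $Q$ that the appendix provides and that is valid on all of $\Cl^0(n)$, not merely on $\Spin(n)$: concretely, $Q(x)_{ij}$ is the $e_i$-coefficient of $x\cdot e_j\cdot \bar{x}$, where $\bar{\phantom{x}}$ is the Clifford reversal antiautomorphism. Substituting $\Phi_{(U,V)}(x)=uxv^{-1}$ and using the identities $\overline{v^{-1}}=v$ and $\bar{u}=u^{-1}$ (which follow from $u,v\in \Spin(n)$), one obtains $\Phi_{(U,V)}(x)\cdot e_j\cdot \overline{\Phi_{(U,V)}(x)} = u\cdot x\cdot (v^{-1} e_j v)\cdot \bar{x}\cdot u^{-1}$. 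Since $v^{-1}e_jv = V^T(e_j) = \sum_\ell V_{j\ell}e_\ell$ and the adjoint action $\mathrm{Ad}(u^{-1})$ is orthogonal on $\Cl(n)$ with restriction $U^T$ on $\R^n$, extracting the $e_i$-coefficient yields
\[ \sum_{k,\ell} U_{ik}\,V_{j\ell}\,Q(x)_{k\ell} = [UQ(x)V^T]_{ij}, \]
establishing the desired identity for all $x\in \Cl^0(n)$.
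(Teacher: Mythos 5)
Your proposal is correct, and it builds the same map as the paper: lift $U,V$ to $u,v\in\Spin(n)$ via surjectivity of $Q|_{\Spin(n)}$ (the paper's Lemma~\ref{lem:qonto}), set $\Phi_{(U,V)}(x)=ux v^{-1}=ux\conj{v}$, and get invertibility and preservation of $\Spin(n)$ from the group structure (Lemma~\ref{lem:group}); you also correctly identify the real crux, namely that the homomorphism property on $\Spin(n)$ alone does not give $UQ(x)V^T=Q(ux\conj{v})$ for \emph{all} $x\in\Cl^0(n)$, and that polarization from the basis vectors is not available. Where you diverge is in how that extension is carried out. The paper packages it as Lemma~\ref{lem:hom}: $\tildeQ(xu)=\tildeQ(x)\tildeQ(u)$ and $\tildeQ(ux)=\tildeQ(u)\tildeQ(x)$ for arbitrary $x\in\Cl(n)$ and a \emph{unit vector} $u\in S^{n-1}$, and then iterates this over the unit-vector factors of $u$ and $\conj{v}$; the same lemma is reused elsewhere (e.g., to prove $Q(\Spin(n))=SO(n)$). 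You instead verify the identity entrywise from $[Q(x)]_{ij}=\langle e_i, x e_j\conj{x}\rangle$, using $\conj{u}=u^{-1}$, $\conj{v}e_j v=V^Te_j$, and the fact that $y\mapsto uyu^{-1}$ is an isometry of $\Cl(n)$ whose restriction to $\R^n$ is $U$. That computation is sound, and every ingredient is available in the appendix: the isometry claim follows from the adjointness relations \eqref{eq:leftadj}--\eqref{eq:rightadj} together with $u\conj{u}=\cliffid$ (cf.\ Lemma~\ref{lem:sphereinv}), and the fact that conjugation by a spin element preserves $\R^n$ and acts there by the corresponding rotation follows by iterating Lemma~\ref{lem:reflect}; it would strengthen the write-up to cite these rather than assert orthogonality of the adjoint action outright (note also that your ``reversal'' agrees with the paper's conjugation only on $\Cl^0(n)$, which is all you need). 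In short, your route is a direct, one-off coefficient calculation, while the paper's factors the same content into a reusable one-sided multiplicativity lemma; both are valid and of comparable length.
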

    
    The following proposition, the crux of our argument, implies that for any
    $n\times n$ matrix $Y$, the maximum of the quadratic form $x\mapsto \langle
    Y,Q(x)\rangle$ quadratic form over the whole sphere and over the (strict)
    subset $\Spin(n)$, coincide. 
    \begin{proposition}
        \label{prop:eig}
        Given any $Y\in \R^{n\times n}$ the quadratic form $x\mapsto \langle
        Y,Q(x)\rangle$ has a basis of eigenvectors that are elements of
        $\Spin(n)$.
    \end{proposition}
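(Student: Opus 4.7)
The plan is to use the equivariance of $Q$ to reduce to the case where $Y$ is diagonal, at which point the basis $(e_I)_{I\in\Ieven}$ of $\Cl^0(n)$ is itself an eigenbasis lying entirely in $\Spin(n)$.

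First I would invoke the special singular value decomposition of Section~\ref{sec:symmetry} to write $Y = U\tilde\Sigma V^T$ with $(U,V)\in S(O(n)\times O(n))$ and $\tilde\Sigma$ diagonal. If $\det(U) = \det(V) = -1$, I would replace $(U,V,\tilde\Sigma)$ by $(UR, VR, R\tilde\Sigma R)$; this leaves $Y$ unchanged, keeps the middle factor diagonal, and places both orthogonal factors in $SO(n)$. Then Proposition~\ref{prop:spinequivariance} yields
\[ \langle Y, Q(x)\rangle = \langle \tilde\Sigma, U^T Q(x) V\rangle = \langle \tilde\Sigma, Q(\Phi_{(U,V)}^{-1} x)\rangle, \]
so if $M(Y)$ denotes the symmetric matrix of the quadratic form $x\mapsto \langle Y, Q(x)\rangle$ in the basis $(e_I)_{I\in\Ieven}$, then $M(Y) = \Phi_{(U,V)}^{-T}\, M(\tilde\Sigma)\, \Phi_{(U,V)}^{-1}$. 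Assuming $\Phi_{(U,V)}$ is orthogonal on $\Cl^0(n)$, this is an orthogonal similarity, so the eigenvectors of $M(Y)$ are exactly the images under $\Phi_{(U,V)}$ of the eigenvectors of $M(\tilde\Sigma)$. Because $\Phi_{(U,V)}$ maps $\Spin(n)$ bijectively onto itself, an eigenbasis for $M(\tilde\Sigma)$ lying in $\Spin(n)$ transports to an eigenbasis for $M(Y)$ lying in $\Spin(n)$.

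It then suffices to treat diagonal $Y$, which is immediate from Proposition~\ref{prop:mef}: writing $x = \sum_{I\in\Ieven} x_I e_I$, the diagonal assumption gives
\[ \langle \tilde\Sigma, Q(x)\rangle = \sum_{i=1}^{n} \tilde\Sigma_{ii}\, [Q(x)]_{ii} = \sum_{i=1}^{n} \tilde\Sigma_{ii} \sum_{I\in\Ieven} [A_{ii}]_{I,I}\, x_I^2, \]
so $M(\tilde\Sigma)$ is already diagonal in the basis $(e_I)_{I\in\Ieven}$, and by Proposition~\ref{prop:basis} each basis vector lies in $\Spin(n)$.

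The main obstacle is to establish orthogonality of $\Phi_{(U,V)}$ on $\Cl^0(n)$, which Proposition~\ref{prop:spinequivariance} does not assert explicitly. I expect this to drop out of the Clifford-algebra construction deferred to Appendix~\ref{app:clif}, in which $\Phi_{(U,V)}$ is realized as a twisted adjoint action by a unit element of $\Spin(n)$ on $\Cl^0(n)$; the natural inner product on $\Cl^0(n)$ is invariant under this action, making $\Phi_{(U,V)}$ orthogonal. Apart from this point, the argument is a routine symmetry reduction.
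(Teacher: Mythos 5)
Your argument is correct and is essentially the paper's proof: reduce to diagonal $Y$ via the special SVD (adjusting signs so both orthogonal factors land in $SO(n)$), use the equivariance of $Q$ to transport the problem, and observe that diagonality of the $A_{ii}$ makes $(e_I)_{I\in\Ieven}\subset\Spin(n)$ an eigenbasis in the diagonal case. The one point you flag as a residual gap --- orthogonality of $\Phi_{(U,V)}$ on $\Cl^0(n)$ --- is in fact also used implicitly by the paper (the matrix of $x\mapsto\langle Y,Q(x)\rangle$ is $\Phi^T M(D)\Phi$, and identifying its eigenvectors with $\Phi^{-1}e_I$ requires $\Phi^T=\Phi^{-1}$), and it does hold: $\Phi_{(U,V)}(x)=ux\conj{v}$ with $u,v\in\Spin(n)$ products of elements of $S^{n-1}$, and iterating Lemma~\ref{lem:sphereinv} shows that left- and right-multiplication by each such factor is an isometry of $\Cl(n)$, hence $\Phi_{(U,V)}$ is orthogonal on $\Cl^0(n)$.
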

    \begin{proof}
         Suppose $Y\in \R^{n\times n}$ is arbitrary. Then by the special
         singular value decomposition $Y$ can be expressed as $Y = U^TDV$ where
         $U$ and $V$ are in $SO(n)$ and $D$ is diagonal.  Then by
         Proposition~\ref{prop:spinequivariance}
         \[ \langle Y,Q(x)\rangle = \langle U^TDV,Q(x)\rangle = \langle D,UQ(x)V^T\rangle = \langle D, Q(\Phi_{(U,V)}x)\rangle.\]
         Consider the quadratic form $z\mapsto \langle D,Q(z)\rangle$. Observe
         that 
         \[ \langle D,Q(z)\rangle = \sum_{i=1}^{n}D_{ii}[Q(z)]_{ii} = \langle z,\left[\textstyle{\sum_{i=1}^{n}}D_{ii} A_{ii}\right]z\rangle\]
         and (by Proposition~\ref{prop:mef}) each of the $A_{ii}$ is diagonal.
         Hence $\sum_{i=1}^{n}D_{ii}A_{ii}$ is diagonal and so $z\mapsto
         \langle D,Q(z)\rangle$ has $(e_I)_{I\in \Ieven}$ as a basis of
         eigenvectors. Hence the quadratic form $x\mapsto \langle
         Y,Q(x)\rangle$ has $\Phi_{(U,V)}^{-1}e_I$ for $I\in \Ieven$ as a basis
         of eigenvectors.  Since the $e_I$ are in $\Spin(n)$ (by
         Proposition~\ref{prop:basis}), $\Phi_{(U,V)}$ is invertible, and
         $\Phi_{(U,V)}^{-1}$ preserves $\Spin(n)$ (by
         Proposition~\ref{prop:spinequivariance}) we can conclude that the
         quadratic form $x\mapsto \langle Y,Q(x)\rangle$ has a basis of
         eigenvectors all of which are elements of $\Spin(n)$. 
     \end{proof}
    Assuming Propositions~\ref{prop:spin} and~\ref{prop:eig} we can prove
    Theorem~\ref{thm:sonpolar} using an embellishment of the same argument we
    used in the $2\times 2$ case.
    \begin{proof}[Proof of Theorem~\ref{thm:sonpolar}]
        Since the image of $\Spin(n)$ under $Q$ is $SO(n)$, an $n\times n$
        matrix $Y$ is in $SO(n)^\circ$ if and only if
        \[\max_{X\in SO(n)}\langle Y,X\rangle = \max_{x\in \Spin(n)}\langle Y,Q(x)\rangle \leq 1.\]
        Since $\Spin(n)$ is a subset of the unit sphere in $\Cl^0(n)$, we have
        that 
        \[ \max_{x\in \Spin(n)}\langle Y,Q(x)\rangle \leq \max_{\substack{x\in \Cl^0(n)\\ \langle x,x\rangle = 1}} \langle Y,Q(x)\rangle.\]
        The maximum of the quadratic form $x\mapsto \langle Y,Q(x)\rangle$ over
        the unit sphere in $\Cl^0(n)$ occurs at any eigenvector corresponding
        to the largest eigenvalue of the quadratic form. By
        Proposition~\ref{prop:eig} we can always find such an eigenvector in
        $\Spin(n)$, establishing that 
        \[ \max_{x\in \Spin(n)}\langle Y,Q(x)\rangle = \max_{\substack{x\in \Cl^0(n)\\ \langle x,x\rangle = 1}} \langle Y,Q(x)\rangle.\]
        Hence $Y\in SO(n)^\circ$ if and only if for all $x\in \Cl^0(n)$ such
        that $\langle x,x\rangle = 1$,  
        \[ \langle Y,Q(x)\rangle = \left\langle x,\textstyle{\sum_{i,j=1}^{n}}Y_{ij}A_{ij} x\right\rangle \leq 1.\]
        This is equivalent to the spectrahedral representation given in
        Theorem~\ref{thm:sonpolar}.
    \end{proof}
    \begin{remark}
        We briefly describe a more geometric dual interpretation of the
        arguments that establish Theorem~\ref{thm:sonpolar}.  Throughout this
        remark let $S = \{x\in \Cl^0(n): \langle x,x\rangle = 1\}$ be the unit
        sphere in $\Cl^0(n)$.  We have seen that there is a quadratic map $Q$
        such that $SO(n) = Q(\Spin(n)) \subset Q(S)$ with the inclusion being
        strict for $n\geq 4$.  The remainder of the proof of
        Theorem~\ref{thm:sonpolar} shows, from this viewpoint, that
        $\conv\,SO(n) = \conv\,Q(\Spin(n)) = \conv\,Q(S)$, i.e.\ all the points
        in $S$ that are not in $\Spin(n)$ are mapped by $Q$ inside the convex
        hull of $Q(\Spin(n))$.  One may wonder whether $Q(S) = \conv\,SO(n)$,
        i.e.~whether the image of the sphere under $Q$ is actually convex.
        This is not the case---already for $n=2$ we can see that $Q(S) = SO(2)
        \neq \conv\,SO(2)$. 
    \end{remark}

    It is now straightforward to prove Theorem~\ref{thm:onpolar}, giving a
    spectrahedral representation of $O(n)^\circ$ of size $2^n$.
    \begin{proof}[Proof of Theorem~\ref{thm:onpolar}]
        Since $O(n)^\circ = SO(n)^\circ \cap SO^-(n)^\circ$ (see
        \eqref{eq:onpolar-intersection}) and we have already constructed a
        spectrahedral representation of $SO(n)^\circ$, it remains to give a
        spectrahedral representation of $SO^-(n)^\circ$.  Since $SO^-(n) =
        R\cdot SO(n)$ where $R = \diag(1,1,\ldots,1,-1)$, it follows that $Y\in
        SO^-(n)^\circ$ if and only if $\langle Y,RX\rangle = \langle
        RY,X\rangle \leq 1$ for all $X\in SO(n)$.  Hence $Y\in SO^-(n)^\circ$
        if and only if $RY \in SO(n)^\circ$. 

        From these observations and Theorem~\ref{thm:sonpolar} we have that 
        \begin{equation}
            \label{eq:onpolar2}
            O(n)^\circ = SO(n)^\circ \cap SO^-(n)^\circ = \bigg\{Y\in \R^{n\times n}: \sum_{i,j=1}^{n}Y_{ij}A_{ij} \nsd I,\; 
        \sum_{i,j=1}^{n}[RY]_{ij}A_{ij} \nsd I\bigg\}
    \end{equation}
    which is a spectrahedral representation of size $2^n$.
    \end{proof}
    \subsection{A spectrahedral representation of $\conv\,SO(n)$}
    \label{sec:son}
    In this section we give a spectrahedral representation of $\conv\,SO(n)$
    using a description of $\conv\,SO(n)$ which is inherited from the
    corresponding description of the parity polytope.
    \begin{proposition}
        \label{prop:son2}
        The convex hull of $SO(n)$ can be expressed in terms of $\conv\,O(n)$ and $SO(n)^\circ$ as
    \[ \conv\,SO(n) = \conv\,O(n) \cap (n-2)  SO^-(n)^\circ.\]
    If $n=3$ this simplifies to 
    $\conv\,SO(3) = SO^-(3)^\circ$.
\end{proposition}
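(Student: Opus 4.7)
The plan is to reduce the claimed identity to the polytope identity in Lemma~\ref{lem:ppalt} via a symmetry argument. The strategy rests on the observation that both $\conv\,SO(n)$ and $\conv\,O(n) \cap (n-2) SO^-(n)^\circ$ are invariant under the action of $S(O(n)\times O(n))$ on $\R^{n\times n}$ given by $(U,V)\cdot X = UXV^T$. Invariance of $\conv\,SO(n)$ is immediate; for the right-hand side, $\conv\,O(n)$ is invariant under the larger group $O(n)\times O(n)$, while $SO^-(n)^\circ$ is $S(O(n)\times O(n))$-invariant because $(U,V)$ with $\det(U)\det(V)=1$ sends any $X$ of determinant $-1$ to $UXV^T$ of determinant $-1$, so it preserves $SO^-(n)$.

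Next I would invoke the special singular value decomposition from Section~\ref{sec:symmetry}: every $X\in \R^{n\times n}$ can be written as $X = U\tilde{\Sigma}V^T$ for some $(U,V)\in S(O(n)\times O(n))$ and $\tilde{\Sigma}\in \Diag$. For any $S(O(n)\times O(n))$-invariant subset $A\subset \R^{n\times n}$, this gives $X\in A \iff \tilde{\Sigma} = U^TXV\in A\cap \Diag$, so two such invariant sets coincide if and only if their intersections with $\Diag$ agree. Consequently it suffices to prove the claimed identity after intersecting both sides with $\Diag$.

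By Proposition~\ref{prop:diag}, identifying $\Diag$ with $\R^n$ via the diagonal entries, we have $\Diag\cap \conv\,SO(n) \leftrightarrow \PP_n$, $\Diag\cap \conv\,O(n)\leftrightarrow \Cube_n$, and $\Diag\cap SO^-(n)^\circ \leftrightarrow {\PP_n^-}^\circ$. The diagonal section of the right-hand side therefore corresponds to $\Cube_n \cap (n-2){\PP_n^-}^\circ$, which equals $\PP_n$ by Lemma~\ref{lem:ppalt}. This establishes the main identity. For the $n=3$ case, it remains to show $SO^-(3)^\circ \subseteq \conv\,O(3)$; by the same invariance/SSVD reduction this is equivalent to ${\PP_3^-}^\circ \subseteq \Cube_3$, which follows from the $n=3$ clause of Lemma~\ref{lem:ppalt} (${\PP_3^-}^\circ = \PP_3$) together with the elementary inclusion $\PP_3 \subseteq \Cube_3$.

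The only step requiring any care is the reduction to diagonal sections: one must verify both that each side of the purported identity is $S(O(n)\times O(n))$-invariant and that the SSVD puts every matrix in the same $S(O(n)\times O(n))$-orbit as a diagonal one. Once that is in place the statement collapses to the previously proved polytope identity.
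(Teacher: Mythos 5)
Your proof is correct and follows essentially the same route as the paper: use the special singular value decomposition to reduce both sides (which are $S(O(n)\times O(n))$-invariant) to their diagonal sections, identify those sections with $\PP_n$, $\Cube_n$, and ${\PP_n^-}^\circ$ via Proposition~\ref{prop:diag}, and then invoke the polytope identity of Lemma~\ref{lem:ppalt}. Your slightly more abstract phrasing of the reduction---that invariant sets coincide precisely when their intersections with $\Diag$ do, since every orbit meets $\Diag$---is exactly the logic the paper carries out pointwise, and your handling of the $n=3$ simplification is also the same.
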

\begin{proof}
    Suppose $X\in \R^{n\times n}$ is arbitrary. By the special singular value
    decomposition $X = U\tilde{\Sigma}V^T$ where $(U,V)\in S(O(n)\times O(n))$
    and $\tilde{\Sigma} = \diag^*(\tilde{\sigma})$ is diagonal.  Then since
    $SO(n)$ is invariant under the action of $S(O(n)\times O(n))$, it follows
    that $X\in \conv\,SO(n)$ if and only if $\tilde{\Sigma}\in \conv\,SO(n)\cap
    \Diag$. Similarly since $\conv\,O(n)$ and $SO^-(n)^\circ$ are invariant
    under the action of $S(O(n)\times O(n))$, it follows that $X\in
    \conv\,O(n)\cap (n-2)SO^-(n)^\circ$ if and only if $\tilde{\Sigma}\in
    \conv\,O(n)\cap \Diag$ and $\tilde{\Sigma}\in (n-2)SO^-(n)^\circ \cap
    \Diag$. 
    
    Since the diagonal section of $\conv\,SO(n)$ is the parity polytope, $X\in
    \conv\,SO(n)$ if and only if $\tilde{\sigma}\in \PP_n$.  Since the diagonal
    section of $\conv\,O(n)$ is the hypercube, $\tilde{\sigma}\in \Cube_n$ if
    and only if $\tilde{\Sigma}\in \conv\,O(n)\cap \Diag$.  Since the diagonal
    section of $SO^-(n)^\circ$ is ${\PP_n^-}^\circ$, $\tilde{\sigma}\in(n-2)
    {\PP_n^-}^\circ$ if and only if $\tilde{\Sigma} \in (n-2)SO^-(n)^\circ\cap
    \Diag$. 
   
     Finally we use the fact that $\PP_n = \Cube_n \cap (n-2){\PP^-_n}^\circ$
     (see Lemma~\ref{lem:ppalt}). Then  $X\in \conv\,SO(n)$ if and only if
     $\tilde{\sigma}\in \PP_n$ which occurs if and only if $\tilde{\sigma}\in
     \Cube_n$ and $\tilde{\sigma}\in (n-2){\PP_n^-}^\circ$ which occurs if and
     only if $X\in \conv\,O(n)\cap (n-2)SO^-(n)^\circ$.

    In the case $n=3$ the description $\PP_n = \Cube_n \cap
    (n-2){\PP_n^-}^\circ$ simplifies to $\PP_3 = {\PP_3^-}^\circ$. The
    corresponding simplification propagates through the above argument to give
    $\conv\,SO(3) = SO^-(3)^\circ$.
\end{proof}
Since the description of $\conv\,SO(n)$ in Proposition~\ref{prop:son2} involves
$\conv\,O(n)$, we first give the well-known spectrahedral representation of
$\conv\,O(n)$.
    \begin{proposition}
        \label{prop:on}
        The convex hull of $O(n)$ is a spectrahedron. An explicit spectrahedral
        representation of size $2n$ is given by
\begin{equation}
\label{eq:onspec}
\conv\,O(n) = \left\{X\in \R^{n\times n}: \begin{bmatrix} 0 & X\\X^T & 0\end{bmatrix} \nsd I_{2n}\right\}.
\end{equation}
    \end{proposition}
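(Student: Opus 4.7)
The plan is to rely on the fact, stated in the introduction, that $\conv\,O(n) = \{X\in\R^{n\times n}: \sigma_1(X)\leq 1\}$ is the operator norm ball. Given this, the task reduces to showing that the linear matrix inequality $\begin{bmatrix} 0 & X\\X^T & 0\end{bmatrix} \nsd I_{2n}$ holds if and only if the largest singular value of $X$ is at most one. This is a classical relationship between singular values of $X$ and eigenvalues of its associated ``dilated'' symmetric matrix.

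First I would compute the spectrum of $M(X) := \begin{bmatrix} 0 & X\\X^T & 0\end{bmatrix}$ in terms of the singular values of $X$. Writing a singular value decomposition $X = U\Sigma V^T$ with $U,V\in O(n)$ and $\Sigma = \diag^*(\sigma_1,\ldots,\sigma_n)$, and conjugating $M(X)$ by the orthogonal block-diagonal matrix $\begin{bmatrix} U & 0\\0 & V\end{bmatrix}$, gives
\[ \begin{bmatrix} U^T & 0\\0 & V^T\end{bmatrix} M(X) \begin{bmatrix} U & 0\\0 & V\end{bmatrix} = \begin{bmatrix} 0 & \Sigma\\\Sigma & 0\end{bmatrix}. \]
After a further permutation interleaving rows and columns, the right-hand side becomes block diagonal with $2\times 2$ blocks $\begin{bmatrix} 0 & \sigma_i\\\sigma_i & 0\end{bmatrix}$, whose eigenvalues are $\pm \sigma_i$. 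Hence the eigenvalues of $M(X)$ are exactly $\{\pm \sigma_i(X)\}_{i=1}^{n}$.

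With this in hand, the condition $M(X) \nsd I_{2n}$ is equivalent to $I_{2n} - M(X) \psd 0$, i.e.\ every eigenvalue of $M(X)$ is at most one. Since the eigenvalues occur in pairs $\pm\sigma_i(X)$ with $\sigma_i(X)\geq 0$, the largest eigenvalue equals $\sigma_1(X)$, so this condition is precisely $\sigma_1(X)\leq 1$. Combining with the characterization of $\conv\,O(n)$ as the operator norm ball yields the claimed spectrahedral representation.

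There is no real obstacle here; the argument is routine once one recognizes the spectrum-doubling trick for the symmetric dilation. The only care required is in the bookkeeping between singular values and eigenvalues, and in the permutation argument used to reduce $\begin{bmatrix}0 & \Sigma\\ \Sigma & 0\end{bmatrix}$ to a block-diagonal form whose spectrum is read off directly.
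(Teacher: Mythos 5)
Your argument is correct, and its core mechanism --- taking an SVD $X = U\Sigma V^T$ and conjugating the dilation $\begin{bmatrix} 0 & X\\ X^T & 0\end{bmatrix}$ by $\begin{bmatrix} U^T & 0 \\ 0 & V^T \end{bmatrix}$ to reduce to a diagonal $\Sigma$ --- is exactly the one the paper uses for the inclusion $\supseteq$. The difference is in what you lean on at the ends. You invoke the characterization $\conv\,O(n) = \{X : \sigma_1(X)\leq 1\}$, which the introduction states as well-known (citing Sanyal et al.) but does not prove, and deduce both inclusions at once from the eigenvalue computation $\textup{spec}(M(X)) = \{\pm\sigma_i(X)\}$. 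The paper instead keeps the proof self-contained within its own development: for $\subseteq$ it checks directly that each $Q\in O(n)$ satisfies the LMI via the Gram factorization $\begin{bmatrix} I_n & -Q\\ -Q^T & I_n\end{bmatrix} = \begin{bmatrix} I_n \\ -Q^T\end{bmatrix}\begin{bmatrix} I_n & -Q\end{bmatrix}\psd 0$ and then invokes convexity of the spectrahedron; for $\supseteq$ it appeals to Proposition~\ref{prop:diag} ($\Diag\cap \conv\,O(n) = \Cube_n$, a consequence of Horn's theorem) rather than to the operator-norm-ball characterization. Both routes are valid; yours is a little quicker given the operator-norm fact as an axiom, while the paper's buys independence from that external fact and stays within the machinery it has already built.
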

    \begin{proof}
        Let $Q\in O(n)$ be arbitrary. Then since $Q^TQ = I_n$ it follows that  
    \[ \begin{bmatrix} I_n & -Q\\-Q^T & I_n\end{bmatrix} = 
        \begin{bmatrix} I_n\\-Q^T\end{bmatrix}\begin{matrix}\begin{bmatrix} I_n &
    -Q\end{bmatrix}\\\begin{matrix} & \end{matrix}\end{matrix}\psd 0\]
    and so $Q$ is an element of the right hand side of \eqref{eq:onspec}. Since
the right hand side of~\eqref{eq:onspec} is convex, it follows that
$\conv\,O(n) \subseteq \left\{X\in \R^{n\times n}: \left[\begin{smallmatrix} 0
& X\\X^T & 0\end{smallmatrix}\right] \nsd I_{2n}\right\}$.

    For the reverse inclusion, suppose $X$ is an element of the right hand side of~\eqref{eq:onspec}. By the singular value decomposition there is a 
    diagonal matrix $\Sigma$ such that $X = U\Sigma V^T$ where $U,V\in O(n)$. Conjugating by the orthogonal matrix
    $\left[\begin{smallmatrix} U^T & 0\\0 & V^T\end{smallmatrix}\right]$ we see that 
\[ \begin{bmatrix} 0 & X\\X^T & 0\end{bmatrix} \nsd I_{2n} \iff \begin{bmatrix} 0 & \Sigma\\\Sigma & 0\end{bmatrix} \nsd I_{2n}\]
    which is equivalent to $-1\leq \Sigma_{ii}\leq 1$ for $i\in [n]$. Since
    $\Diag\cap \conv\,O(n)$ is the hypercube it follows that $\Sigma \in \Diag
    \cap \conv\,O(n)$ and so that $U\Sigma V^T \in \conv\,O(n)$. 
\end{proof}

\begin{proof}[Proof of Theorem~\ref{thm:son}]
    Since we now have a spectrahedral representation of $\conv\,O(n)$ (from~\eqref{eq:onspec}) and of $SO^-(n)^\circ$ (from~\eqref{eq:onpolar2}), 
    we can use Proposition~\ref{prop:son2} to combine them to give the spectrahedral representation
\[ \conv\,SO(n) = \left\{X\in \R^{n\times n}: \begin{bmatrix} 0 & X\\X^T & 0\end{bmatrix} \nsd I_{2n},\;\;
\sum_{i,j=1}^{n}A_{ij}[RX]_{ij} \nsd (n-2)I_{2^{n-1}}\right\}.\]
In the case $n=3$ Proposition~\ref{prop:son2} tells us that $\conv\,SO(3) = SO^-(3)^\circ$ and so 
\[\conv\,SO(3) = \left\{X\in \R^{3\times 3}: \sum_{i,j=1}^{3}A_{ij}[RX]_{ij} \nsd I_{4}\right\}\]
            which can be expressed explicitly as in~\eqref{eq:so3} by using the definition of the $A_{ij}$ in~\eqref{eq:AB}. 

            To conclude the proof we explicitly simplify the spectrahedral representation~\eqref{eq:son} for the case $n=2$. Indeed
\[ \conv\,SO(2) = \bigg\{X\in \R^{2\times 2}: \begin{bmatrix} I & -X\\-X^T & I\end{bmatrix} \psd 0,\;
        \begin{bmatrix} -X_{11} + X_{22} & -X_{21} - X_{12}\\-X_{21} - X_{12} & X_{11} - X_{22}\end{bmatrix} \nsd 0\bigg\}.\]
Since $\left[\begin{smallmatrix} -X_{11} + X_{22} & -X_{21} - X_{12} \\ -X_{21} - X_{12} & X_{11} - X_{22}\end{smallmatrix}\right]$ has 
trace zero, if it is also negative semidefinite then it must actually be zero. Consequently if $X\in \conv\,SO(2)$ then it must 
satisfy $X_{11} = X_{22}$ and $X_{12} = -X_{21}$ and so 
be of the form $X = \left[\begin{smallmatrix} c & -s\\s & c\end{smallmatrix}\right]$ for some $c,s\in \R$. Hence 
\[ \conv\,SO(2) = \left\{\begin{bmatrix} c & -s \\s & c\end{bmatrix} \in \R^{2\times 2}:
        \begin{bmatrix} 1 & 0 & -c & s\\0 & 1 & -s & -c\\-c & -s & 1 & 0\\s & -c & 0 & 1\end{bmatrix} \psd 0\right\}.\]
This is still a spectrahedral representation of size $4$, but the constraint has symmetry---it is invariant under 
simultaneously reversing the order of the rows and columns---suggesting that it can be block diagonalized \cite{gatermann2004symmetry}.
Under the change of coordinates
\begin{equation}
\label{eq:kk}
\frac{1}{2}\begin{bmatrix} 1 & 0 & -1 & 0\\0 & 1 & 0 & 1\\0 & 1 & 0 & -1\\-1 & 0 & -1 & 0\end{bmatrix}\!\!\!
    \begin{bmatrix} 1 & 0 & -c & s\\0 & 1 & -s & -c\\-c & -s & 1 & 0\\s & -c & 0 & 1\end{bmatrix}\!\!\!
    \begin{bmatrix} 1 & 0 & -1 & 0\\0 & 1 & 0 & 1\\0 & 1 & 0 & -1\\-1 & 0 & -1 & 0\end{bmatrix}^T\!\!\!\! = 
 \begin{bmatrix} 1 + c & s & 0 & 0\\s & 1-c & 0 & 0\\0 & 0 & 1+c & s\\0 & 0 & s & 1-c\end{bmatrix}\end{equation}
we see that the size $4$ spectrahedral representation in~\eqref{eq:kk} is actually two copies of the same 
size $2$ representation, allowing us to conclude that
\[ \conv\,SO(2) = \bigg\{\begin{bmatrix} c & -s\\s & c\end{bmatrix}\in \R^{2\times 2}: 
        \begin{bmatrix} 1+c & s\\s &1-c\end{bmatrix} \psd 0\bigg\}\]
as stated in Theorem~\ref{thm:son}.
\end{proof}

    \section{Lower bounds on the size of representations}
    \label{sec:lower}
    \subsection{Spectrahedral representations}
    Whenever a convex set has a polyhedral section, we can immediately obtain a
    simple lower bound on the possible size of a spectrahedral representation
    of that convex set in terms of the number of facets of that polyhedron.
\begin{lemma}
    Suppose $C\subset \R^n$ has a spectrahedral representation of size $m$ and
    $V$ is a subspace of $\R^n$ such that $C\cap V$ is a polytope with $f$
    (irredundant) facets. Then $m \geq f$.
\end{lemma}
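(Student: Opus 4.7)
The plan is a degree count for $\det L(x)$, where $L(x) \psd 0$ is a size-$m$ defining pencil of the spectrahedron. First, by restricting the pencil $L(x) = I_m + \sum_{i=1}^n A_i x_i$ defining $C$ to the affine hull of $P := C \cap V$, we obtain a size-$m$ spectrahedral representation of $P$ in which $P$ is full-dimensional. So it suffices to show that every full-dimensional polytope with $f$ irredundant facets requires a spectrahedral representation of size at least $f$.

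Let $P$ be such a polytope (in a space we still call $V$), with facets $F_1, \ldots, F_f$ defined by pairwise non-proportional affine linear forms $\ell_1, \ldots, \ell_f$ on $V$. Given a size-$m$ representation $P = \{x \in V : L(x) \psd 0\}$, the key observation is that $\det L(x)$ is a polynomial of degree at most $m$ that vanishes on every facet $F_i$, because $L(x)$ must be singular there. Since $F_i$ is Zariski dense in the affine hyperplane $\{\ell_i = 0\} \subset V$, the polynomial $\det L$ vanishes on the entire hyperplane, and hence $\ell_i$ divides $\det L$. The $\ell_i$ are pairwise coprime as irreducible affine linear forms defining distinct hyperplanes, so their product $\ell_1 \cdots \ell_f$ of degree $f$ divides $\det L$, giving $f \leq \deg \det L \leq m$.

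The main obstacle is that $\det L(x)$ could be identically zero, making the degree bound vacuous. I would handle this via a standard reduction: set $N := \bigcap_{x \in V} \ker L(x)$, and observe that for $x$ in the interior of $P$, one has $\ker L(x) = N$. This uses that $L \psd 0$ on a Euclidean-open neighborhood of any such $x$ together with the fact that if $v^T L(\cdot) v$ is a non-negative affine function vanishing at an interior point of $P$, it must vanish on all of $\textup{aff}(P)$, forcing $v \in \ker L(x)$ for every $x$. Restricting $L$ to $N^\perp$ then yields a reduced pencil $L'$ of size $m - \dim N$ which is strictly positive definite on $\textup{int}\,P$; thus $\det L'$ is a nonzero polynomial of degree at most $m - \dim N$, and the main argument applied to $L'$ gives $f \leq m - \dim N \leq m$.
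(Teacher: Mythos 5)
Your proof is correct and follows essentially the same route as the paper: the determinant of the defining pencil is a polynomial of degree at most $m$ that vanishes on the boundary of the polytope, hence is divisible by the $f$ pairwise coprime facet-defining linear forms, giving $f \leq m$. The additional care you supply---restricting to the affine hull and reducing by the common kernel to rule out $\det L \equiv 0$---addresses degeneracies the paper sidesteps via its monic normalization $I_m + \sum_{i} A_i x_i$ (which gives $p(0)=1$), but it does not change the substance of the argument.
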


\begin{proof}
    Suppose $C$ has a spectrahedral representation $C = \{x: \sum_i A_i x_i +
    A_0 \psd 0\}$ of size $m$, so the matrices $A_i$ are $m\times m$.  Then
    $p(x) = \det\left(\sum_i A_i x_i + A_0\right)$ is a polynomial of degree at
    most $m$ that vanishes on the boundary of $C$.  If $V$ is any subspace of
    $\R^n$ then $\left.p\right|_{V}$ is a polynomial of degree at most the
    degree of $p$ that vanishes on the boundary of $C\cap V$. Finally, any
    polynomial that vanishes on the boundary of a polyhedron with $f$
    (irredundant) facets has degree at least $f$ (since it must have a linear
    factor for each facet-defining hyperplane).  Consequently we have the chain
    of inequalities
\[ f\leq \textup{deg}(p|_{V}) \leq \textup{deg}(p) \leq m\]
establishing the result.
\end{proof}

Remarkably this simple technique allows us to establish that our spectrahedral
representations are of minimum size.
\begin{proof}[Proof of Theorem~\ref{thm:lower}]
    The diagonal slice of $O(n)^\circ$ is the cross-polytope, which (for $n\geq
    1$) has $2^n$ facets. Hence, for $n\geq 1$, any spectrahedral
    representation of $O(n)^\circ$ has size at least $2^n$.  The diagonal slice
    of $SO(n)^\circ$ is the polar of the parity polytope, which (for $n\geq 2$)
    has $2^{n-1}$ facets.  Hence, for $n\geq 2$, any spectrahedral
    representation of $SO(n)^\circ$ has size at least $2^{n-1}$.  The diagonal
    slice of $\conv\,SO(n)$ is the parity polytope, which for $n\geq 4$ has
    $2^{n-1}+2n$ facets, and for $n=3$ has $4$ facets.  It follows that any
    spectrahedral representation of $\conv \,SO(n)$ has size at least
    $2^{n-1}+2n$ for $n\geq 4$ and size at least $4$ for $n=3$. 
\end{proof}

The spectrahedral representations we construct in Section~\ref{sec:reps}
achieve these lower bounds and so are of minimum size. 

\subsection{Equivariant \psdlifts{}} 
\label{sec:eqpsd}
As is established in Theorem~\ref{thm:lower}, our spectrahedral representations
are necessarily of exponential size. While they are useful in practice for very
small $n$ (such as the physically relevant $n=3$ case), this is not the case
for larger $n$.

\paragraph{\psdlifts{}}
In general if $C$ is a spectrahedron, it may be possible to give a much smaller
\emph{projected spectrahedral} representation of $C$. In other words, it may be
the case that $C = \pi(D)$ where $\pi$ is a linear map\footnote{In this section
only, to conform with standard notation for \psdlifts{}, we use $\pi$ to mean
an arbitrary linear map} and $D$ has a spectrahedral representation that has
much smaller size then any spectrahedral representation of $C$. Note that
throughout this section if $D$ has a spectrahedral representation of size $m$
we express it as $D = \pi(L\cap \Sym_+^m)$ where $L$ is an affine subspace of
$\Sym^m$, the space of $m\times m$ symmetric matrices, and $\Sym_+^m\subset
\Sym^m$ is the cone of positive semidefinite $m\times m$ symmetric matrices.
The following definition is a specialization of~\cite[Definition
2.1]{gouveia2013lifts}.
\begin{definition}
    Suppose $C\subset \R^n$ is a convex body. If $C = \pi(L\cap \Sym_+^m)$
    where $L$ is an affine subspace of $m\times m$ symmetric matrices and
    $\pi:\Sym^m\rightarrow \R^n$ is a linear map, we say that $C$ has a
    \emph{\psdlift{}} of size $m$.
\end{definition}
It is straightforward to show that if $C$ has a \psdlift{} of size $m$, then
$C^\circ$ also has a \psdlift{} of size $m$ \cite{gouveia2013lifts}.  This
simple observation already yields examples of convex bodies for which there is
an exponential gap between the size of the smallest spectrahedral
representation and the size of the smallest \psdlift{}. For instance, as
demonstrated in Example~\ref{eg:onpolarproj}, the smallest possible
spectrahedral representation of $O(n)^\circ$ has size $2^n$ and yet it has a
\psdlift{} of size $2n$. 

\paragraph{Equivariant \psdlifts{}}
    While there has been some recent progress in obtaining lower bounds on the
    size of \psdlifts{} of some polytopes
    \cite{gouveia2013worst,briet2013existence}, little is understood about
    lower bounds on the size of \psdlifts{} of convex bodies in general.
    Recently, new techniques have been developed for obtaining lower bounds on
    the size of \emph{equivariant} \psdlifts{} of orbitopes. These are
    \psdlifts{} that `respect' (in a precise sense to be defined below) the
    symmetries of that orbitope. 
    
    In the remainder of this section we show that any projected spectrahedral
    representation of $\conv\,SO(n)$ that is equivariant with respect to the
    action of $S(O(n)\times O(n))$, must have size exponential in $n$. The
    argument works by showing that from any \psdlift{} of $\conv\,SO(n)$ that
    is equivariant with respect to the action of $S(O(n)\times O(n))$ we can
    construct a \psdlift{} of the parity polytope that is equivariant with
    respect to a certain group action on $\R^n$.  We then apply a recent result
    that gives an exponential lower bound on the size of appropriately
    equivariant \psdlifts{} of the parity polytope.

    The following definition \cite[Definition 2]{fawzi2013equivariant} makes
    the notion of equivariant \psdlift{} precise.
    \begin{definition}
        \label{def:eqpsdlift}
        Let $C \subset \R^n$ be a convex body invariant under the action of a
        group $G$ by linear transformations. Assume $C = \pi(L\cap \Sym^m_+)$
        is a \psdlift{} of $C$ of size $m$. The lift is called
        \emph{$G$-equivariant} if there is a group homomorphism $\rho : G
        \rightarrow GL(m)$ such that
        \[ \rho(g) X \rho(g)^T \in L \quad \forall g \in G, \; \text{for all $X \in L$} \]
and
\begin{equation}
 \label{eq:linequivariance}
 \pi(\rho(g) X \rho(g)^T) = g\cdot \pi(X) \quad \text{for all $g \in G$ and all $X \in L\cap \Sym^d_+$}.
\end{equation}
    \end{definition}
    In the present setting we are interested in two particular cases of
    equivariant \psdlifts{}: $S(O(n)\times O(n))$-equivariant \psdlifts{} of
    $\conv\,SO(n)$, and $\Gamma_{\parity}$-equivariant \psdlifts{} of the
    parity polytope. Here $\Gamma_\parity$ can be thought of concretely as the
    group of evenly signed permutation matrices---signed permutation matrices
    where there are an even number of entries that take the value $-1$. These
    act on $\R^n$ by matrix multiplication.

    We are now in a position to relate $S(O(n)\times O(n))$-equivariant
    \psdlifts{} of $\conv\,SO(n)$ with $\Gamma_\parity$-equivariant
    \psdlifts{} of $\PP_n$. 
    
    \begin{proposition}
        \label{prop:sonpplower}
        If $\conv\,SO(n)$ has an equivariant \psdlift{} of size $m$ then
        $\PP_n$ has an equivariant \psdlift{} of size $m$.     
    \end{proposition}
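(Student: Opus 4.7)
The plan is to construct the desired lift of $\PP_n$ from the given lift of $\conv\,SO(n)$ by composing the projection with the diagonal projection $\projto{\Diag}$ and pulling back the group action through an embedding $\iota : \Gamma_{\parity} \hookrightarrow S(O(n) \times O(n))$. Suppose $\conv\,SO(n) = \pi(L \cap \Sym_+^m)$ is an $S(O(n) \times O(n))$-equivariant \psdlift{} of size $m$ with homomorphism $\rho : S(O(n) \times O(n)) \to GL(m)$. Writing an evenly signed permutation matrix in the form $g = P_\pi S_s$, where $P_\pi$ is a permutation matrix and $S_s = \diag(s)$ is a diagonal sign matrix with $\prod_i s_i = 1$, I would define $\iota(g) = (P_\pi, P_\pi S_s)$. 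A short matrix computation verifies that $\iota$ is a group homomorphism, and since $\det(P_\pi)\det(P_\pi S_s) = \prod_i s_i = 1$ its image lies in $S(O(n) \times O(n))$.

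Next I would set $\tilde{\pi} = \projto{\Diag} \circ \pi$ (identifying $\Diag$ with $\R^n$) and $\tilde{\rho} = \rho \circ \iota$. The image $\tilde{\pi}(L \cap \Sym_+^m) = \projto{\Diag}(\conv\,SO(n))$ equals $\PP_n$ by the consequence~\eqref{eq:horn1} of Horn's theorem, so $\tilde{\pi}$ presents $\PP_n$ as a \psdlift{} of size $m$. The inclusion $\tilde{\rho}(g) X \tilde{\rho}(g)^T \in L$ for $g \in \Gamma_{\parity}$ and $X \in L$ is immediate from the $S(O(n) \times O(n))$-equivariance of the original lift because $\tilde{\rho}(g) = \rho(\iota(g))$ and $\iota(g) \in S(O(n) \times O(n))$.

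The main verification that remains is the linear equivariance property~\eqref{eq:linequivariance} for the composed data, which reduces to showing that for any $g = P_\pi S_s \in \Gamma_{\parity}$ and any $Y \in \R^{n \times n}$ one has $\projto{\Diag}(\iota(g) \cdot Y) = g \cdot \projto{\Diag}(Y)$. A direct computation of the diagonal entries of $\iota(g) \cdot Y = P_\pi Y S_s P_\pi^T$ gives
\[ (P_\pi Y S_s P_\pi^T)_{ii} = s_{\pi^{-1}(i)} Y_{\pi^{-1}(i),\,\pi^{-1}(i)}, \]
which depends only on $\projto{\Diag}(Y)$ and matches $(g \cdot \projto{\Diag}(Y))_i$. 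Combining this with the equivariance of the original lift then yields $\tilde{\pi}(\tilde{\rho}(g) X \tilde{\rho}(g)^T) = g \cdot \tilde{\pi}(X)$ for all $X \in L \cap \Sym_+^m$. I expect the main obstacle to be identifying the correct embedding $\iota$: the symmetric choice $\iota(g) = (g, g)$ would only realize a permutation action on the diagonal and lose all the sign information (since $S_s^2 = I$ would collapse), so the asymmetric form $\iota(g) = (P_\pi, P_\pi S_s)$, in which the two components differ precisely by $S_s$, is exactly what enables sign flips to appear in the induced action on $\R^n \cong \Diag$.
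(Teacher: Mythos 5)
Your proof is correct and takes essentially the same route as the paper: compose $\pi$ with $\projto{\Diag}$, invoke Horn's theorem to identify the image as $\PP_n$, and pull $\rho$ back along a homomorphism $\Gamma_\parity \to S(O(n)\times O(n))$ obtained by splitting a signed permutation into its permutation and sign parts, then verify equivariance by the diagonal-entry computation. The only difference is cosmetic: the paper embeds via $DP \mapsto (DP,P)$ (sign absorbed into the first factor) while you use $P_\pi S_s \mapsto (P_\pi, P_\pi S_s)$ (sign in the second factor); these mirror-image choices both ensure the sign information survives in the induced action on $\R^n \cong \Diag$.
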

    \begin{proof}
        Suppose $\conv\,SO(n) = \pi(L\cap \Sym_+^m)$ is a $S(O(n)\times
        O(n))$-equivariant \psdlift{} of $\conv\,SO(n)$ of size $m$ and let
        $\rho:S(O(n)\times O(n))\rightarrow GL(m)$ be the associated
        homomorphism. Since the projection of $\conv\,SO(n)$ onto the subspace
        of diagonal matrices is $\PP_n$ (Theorem~\ref{thm:horn}) it follows
        that \[ \PP_n = (\projto{\Diag}\circ \pi)(L\cap \Sym_+^m)\] is a
        \psdlift{} of $\PP_n$ of size $m$. It remains to show that this lift of
        $\PP_n$ is $\Gamma_\parity$-equivariant. In other words we need to
        construct a homomorphism $\tilde{\rho}:\Gamma_\parity\rightarrow GL(m)$
        satisfying the requirements of Definition~\ref{def:eqpsdlift}.

        First observe that any element of $\Gamma_\parity$ can be uniquely
        expressed as as $DP$ where $D$ is a diagonal sign matrix with
        determinant one, and $P$ is a permutation matrix. Furthermore, note
        that if $D_1P_1$ and $D_2P_2$ are elements of $\Gamma_\parity$, then 
    \[ (D_1P_1)(D_2P_2) = (D_1P_1D_2P_1^T)(P_1P_2)\]
    gives the associated factorization of the product. Hence define
    $\phi:\Gamma_\parity\rightarrow S(O(n)\times O(n))$ by $\phi(DP) = (DP,P)$.
    Observe that this is a homomorphism because 
    \[\phi((D_1P_1)(D_2P_2)) = \phi((D_1P_1D_2P_1^T)(P_1P_2)) = ((D_1P_1)(D_2P_2),P_1P_2) =  \phi(D_1P_1)\cdot\phi(D_2P_2).\]
    Define a homomorphism $\tilde{\rho}: \Gamma_\parity\rightarrow GL(m)$ by
    $\tilde{\rho} = \rho \circ \phi$. For any symmetric matrix $X$ it is the
    case that $DP\cdot \projto{\Diag}(X) = \projto{\Diag}(DPXP^T)$. Hence  
    \begin{align*}
        DP\cdot\projto{\Diag}(\pi(X)) & = \projto{\Diag}(DP\pi(X)P^T)\\
                                   & = \projto{\Diag}(\phi(DP,P)\cdot \pi(X))\\
                                   & = \projto{\Diag}(\pi(\rho(DP,P)X\rho(DP,P)^T))\quad\text{since the lift of $\conv\,SO(n)$ is equivariant}\\
                                   & = \projto{\Diag}(\pi(\tilde{\rho}(DP) X \tilde{\rho}(DP)^T))\quad\text{by the definition of $\tilde{\rho}$}
    \end{align*}
    establishing that the lift is $\Gamma_\parity$-equivariant. 
    \end{proof}
    The following lower bound on the size of $\Gamma_\parity$-equivariant
    \psdlifts{} of the parity polytope is one of the main results
    of~\cite{fawzi2013equivariant}.
    \begin{theorem}
        \label{thm:pplower}
        Any $\Gamma_{\parity}$-equivariant \psdlift{} of $\PP_n$ for $n\geq 8$
        must have size at least $\frac{1}{n+1} 2^{0.54 n}$.
    \end{theorem}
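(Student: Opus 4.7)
The plan is to use the equivariant Yannakakis framework to reduce this lower bound to a representation-theoretic counting problem. The first step would be to invoke the standard correspondence: a $\Gamma_\parity$-equivariant \psdlift{} of size $m$ is equivalent to an equivariant PSD factorization of the slack matrix of $\PP_n$, that is, the existence of a homomorphism $\rho:\Gamma_\parity\to GL(m)$ together with equivariant assignments $v\mapsto A_v\in \Sym_+^m$ on vertices and $f\mapsto B_f\in \Sym_+^m$ on facets satisfying $\langle A_v,B_f\rangle=s(v,f)$, where $s(v,f)$ is the slack of vertex $v$ at facet $f$ and equivariance means $A_{g\cdot v}=\rho(g)A_v\rho(g)^T$ and similarly for $B_f$.

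Next I would fix a single ``parity-type'' facet $F_0$, say $\sum_{i=1}^n v_i\leq n-2$, and let $H\leq\Gamma_\parity$ be its stabilizer. Since $\rho(H)$ fixes $B_{F_0}$ under conjugation, Schur's lemma forces $B_{F_0}$ to act as a scalar on each $H$-isotypic component of the restriction $\rho|_H$. Transporting the slack identity $\langle A_v,B_{F_0}\rangle=s(v,F_0)$ across the orbit $\Gamma_\parity\cdot F_0$ by equivariance then expresses the slack function $v\mapsto s(v,F_0)$, viewed as a function on $\Gamma_\parity/\textup{Stab}(v)$, as a nonnegative combination of matrix coefficients indexed by the irreducibles of $\Gamma_\parity$ occurring in $\rho$. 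Every irreducible $\chi$ of $\Gamma_\parity$ that carries nonzero coefficient in this expansion must therefore appear in $\rho$, yielding the bound $m\geq \sum_\chi d_\chi$, where the sum is over the contributing irreducibles and $d_\chi$ is their dimension.

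The main technical step is a quantitative representation-theoretic estimate. The group $\Gamma_\parity$ is an index-$2$ subgroup of the hyperoctahedral group $(\Z/2)^n\rtimes\Sym_n$, so by Clifford theory its irreducibles are indexed by pairs of partitions of $n$ (subject to a parity constraint), and the characters decompose explicitly in terms of characters of Specht modules. I would compute the Fourier decomposition of $s(\cdot,F_0)$ on $\Gamma_\parity/H$ and show that a combinatorially rich family of irreducibles appears with nonzero coefficient. The final task is then to lower-bound $\sum_\chi d_\chi$ over this family by $\frac{1}{n+1}2^{0.54n}$ using an entropy/Stirling estimate on dimensions of Specht modules of $\Sym_n$.

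The hardest step is unmistakably this last quantitative one: one must simultaneously identify the smallest family of irreducibles that any equivariant factorization cannot avoid and estimate their total dimension from below. The delicate balance is between the number of inequivalent irreducibles that necessarily occur and how large each of them is, and the explicit constant $0.54$ arises from optimizing this tradeoff, most likely by isolating a partition shape whose normalized logarithmic Specht dimension, evaluated via an entropy formula, attains this value. The factor $\frac{1}{n+1}$ is a polynomial correction suggesting that the bound is driven by a single near-extremal irreducible of dimension $\approx 2^{0.54n}$ rather than by genuinely summing an exponentially large family.
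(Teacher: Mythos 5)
First, a remark on scope: the paper does not prove Theorem~\ref{thm:pplower} at all---it is imported verbatim from Fawzi, Saunderson and Parrilo~\cite{fawzi2013equivariant} (what the paper itself proves is only the reduction in Proposition~\ref{prop:sonpplower}). So your proposal must be judged on its own, and it has a genuine gap at its core. The object you propose to decompose, the slack function $v\mapsto s(v,F_0)$ of the parity facet, is an \emph{affine} function of the vertex: $s(v,F_0)=(n-2)-\sum_i v_i$ on the even vertices of the cube. Its isotypic (Fourier) decomposition is therefore supported on the trivial representation and the degree-one level, a space of dimension $O(n)$, no matter what the lift is. Consequently no ``combinatorially rich family of irreducibles'' can ever appear in that expansion, and no exponential bound can be extracted from it; the plan stalls exactly at the step you flag as the main technical one. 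A secondary but real flaw is the inference ``any irreducible appearing in the expansion must appear in $\rho$'': since $s(g\cdot v_0,F_0)=\tr\bigl(\rho(g)A_{v_0}\rho(g)^TB_{F_0}\bigr)$ is \emph{quadratic} in the matrix coefficients of $\rho$, its constituents are only forced into $\rho\otimes\rho$, not into $\rho$, so even for the irreducibles that do appear you would only get $m^2\ge d_\chi$-type conclusions, not $m\ge\sum_\chi d_\chi$.

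The proof in the cited source works around precisely this obstruction by using the sum-of-squares structure rather than the slack itself: factoring $A_{v_0}=\sum_j a_ja_j^T$ gives $s(g\cdot v_0,F_0)=\sum_{i,j}\langle c_i,\rho(g)a_j\rangle^2$, i.e.\ an sos certificate of the facet inequality by functions $f_{ij}$ lying in a $\Gamma_\parity$-invariant subspace of functions on the even hypercube whose irreducible constituents occur in $\rho$. Two ingredients absent from your outline then do the work: (a) a classification showing that $\Gamma_\parity$-invariant subspaces of functions on the vertex set are spanned by Fourier levels, so low-dimensional ones contain only extreme (low- or complementarily high-degree) levels; and (b) a Grigoriev-type degree lower bound showing that $(n-2)-\sum_i x_i$ admits no sos certificate by such low-degree functions on the even vertices, which forces some $f_{ij}$ to have a component in a middle level and hence forces a middle-level irreducible, of dimension $\binom{n}{k}$ with $k=\Theta(n)$, into $\rho$. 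The numerology of the stated bound, $\binom{n}{\lceil n/8\rceil}\ge 2^{H(1/8)n}/(n+1)\ge 2^{0.54n}/(n+1)$ with $H$ the binary entropy, shows that the constant $0.54$ is $H(1/8)\approx 0.5436$ and the factor $\frac{1}{n+1}$ is the standard binomial-entropy denominator---not an entropy of Specht-module dimensions. Your intuition that a single large irreducible drives the bound is correct, but the hyperoctahedral Clifford-theory/Specht apparatus is unnecessary (plain Fourier analysis on the hypercube suffices), and without the sos reduction and the degree lower bound the argument as proposed cannot reach any exponential bound.
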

    Combining Proposition~\ref{prop:sonpplower} with
    Proposition~\ref{thm:pplower} we obtain the following exponential lower
    bound on the size of any equivariant \psdlift{} of $\conv\,SO(n)$.
    \begin{corollary}
        Any $S(O(n)\times O(n))$-equivariant \psdlift{} of $\conv\,SO(n)$ for
        $n\geq 8$ must have size at least $\frac{1}{n+1} 2^{0.54 n}$.
    \end{corollary}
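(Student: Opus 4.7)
The plan is to chain together Proposition~\ref{prop:sonpplower} and Theorem~\ref{thm:pplower}, since the corollary is essentially an immediate consequence of these two ingredients. First, I would start by assuming for contradiction (or directly) that $\conv\,SO(n)$ admits an $S(O(n)\times O(n))$-equivariant \psdlift{} of size $m$, with the corresponding homomorphism $\rho : S(O(n)\times O(n)) \to GL(m)$ as in Definition~\ref{def:eqpsdlift}.

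Next, I would invoke Proposition~\ref{prop:sonpplower} to transport this lift to the diagonal slice: composing the projection $\projto{\Diag}$ with $\pi$ produces a \psdlift{} of the parity polytope $\PP_n$ of the same size $m$, and the embedding $\phi : \Gamma_\parity \to S(O(n)\times O(n))$ described in that proof, composed with $\rho$, supplies the homomorphism $\tilde{\rho}$ that certifies $\Gamma_\parity$-equivariance. No new work is needed here, as the proposition is stated in exactly the form required.

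Finally, I would apply Theorem~\ref{thm:pplower} to the resulting $\Gamma_\parity$-equivariant \psdlift{} of $\PP_n$: provided $n \geq 8$, this immediately yields $m \geq \frac{1}{n+1} 2^{0.54 n}$, which is the claim.

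Since both building blocks are stated and attributed earlier in the paper, there is no genuinely hard step; the only thing to be careful about is that the hypothesis $n \geq 8$ of Theorem~\ref{thm:pplower} is precisely what gets transferred into the hypothesis of the corollary, and that the size of the lift is preserved under the construction of Proposition~\ref{prop:sonpplower} (it is, since $\projto{\Diag}\circ\pi$ uses the same PSD cone $\Sym_+^m$ as the original lift). In short, the proof is one line: combine Propositions~\ref{prop:sonpplower} and Theorem~\ref{thm:pplower}.
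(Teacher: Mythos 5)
Your proposal is correct and matches the paper's argument exactly: the paper obtains this corollary by directly combining Proposition~\ref{prop:sonpplower} with Theorem~\ref{thm:pplower}, just as you do. Your additional remarks about preserving the size $m$ and carrying over the hypothesis $n\geq 8$ are accurate but not needed beyond what the two cited results already provide.
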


\section{Summary and open questions}
\label{sec:conclusion}
In this work we have constructed minimum-size spectrahedral representations for
the convex hull of $SO(n)$ and its polar.  We have also constructed a
minimum-size spectrahedral representation of $O(n)^\circ$ (the nuclear norm
ball).  We conclude the paper by discussing  some natural questions raised by
our results.

\subsection{Doubly spectrahedral convex sets}
We have seen that both the convex hull of $SO(n)$ and its polar are
spectrahedra. The same is true of the convex hull of $O(n)$ (the operator norm
ball) and its polar (the nuclear norm ball), as established by Sanyal et
al.~\cite[Corollary 4.9]{sanyal2011orbitopes}.  This is a very special
phenomenon---the polar of a spectrahedron is not, in general, a spectrahedron.
For example, the intersection of the second-order cone $\{(x,y,z): z \geq
\sqrt{x^2+y^2}\}$ and the non-negative orthant is a spectrahedron, but its
polar has non-exposed faces and so is not a
spectrahedron~\cite{ramana1995some}.

If a convex set $C$ and its polar are both spectrahedra, we say that $C$ is a
\emph{doubly spectrahedral} convex set.  Apart from $\conv\,O(n)$ and
$\conv\,SO(n)$, two distinct families of doubly spectrahedral convex sets are
the following:
\begin{description}
    \item[Polyhedra] Every polyhedron is a spectrahedron, and the polar of 
        a polyhedron is again a polyhedron. Hence polyhedra are doubly spectrahedral.
    \item[Homogeneous cones] A convex cone $K$ is \emph{homogeneous} if the automorphism group
        of $K$ acts transitively on the interior of $K$.  Using Vinberg's
        classification of homogeneous cones in terms of $T$-algebras
        \cite{vinberg1963theory}, Chua gave spectrahedral representations for
        all homogeneous cones \cite{chua2003relating}. Furthermore, $K$ is
        homogeneous if and only its dual cone $K^* = -K^\circ$ is homogeneous
        \cite[Proposition 9]{vinberg1963theory}. From these two observations it
        follows that any homogeneous cone is doubly spectrahedral.
\end{description}

We have seen that the doubly spectrahedral convex sets are a strict subset of
all spectrahedra that includes all polyhedra, all homogeneous convex cones, and
$\conv\,O(n)$ and $\conv\,SO(n)$. 
\paragraph{Problem} Characterize doubly spectrahedral convex sets.

\subsection{Non-equivariant \psdlifts{}}

In Section~\ref{sec:lower} we showed that our spectrahedral representations of $\conv\,SO(n)$ and $SO(n)^\circ$
are necessarily of exponential size
and that any $S(O(n)\times O(n))$-equivariant \psdlift{} of $\conv\,SO(n)$ must also have exponential size. 
    Our lower bound on the size of $S(O(n)\times O(n))$-equivariant \psdlifts{} of $\conv\,SO(n)$ used the 
    fact that any $\Gamma_\parity$-lift of the parity polytope has exponential size. Nevertheless, the parity
    polytope is known to have a \psdlift{} (in fact it is an LP lift) of size $4(n-1)$ \cite[Section 2.6.3]{carr2005polyhedral} that is \emph{not}
    $\Gamma_\parity$-equivariant (see \cite[Appendix C]{fawzi2013equivariant} for further discussion). It is quite possible that  
    by appropriately breaking symmetry we can find a small \psdlift{} of $\conv\,SO(n)$. 

\paragraph{Question} Does $\conv\,SO(n)$ have a \psdlift{} with size polynomial in $n$?

\appendix
\section{Clifford algebras and $\Spin(n)$}
\label{app:clif}
In this section we describe and establish the key properties of the quadratic
mapping $Q$ from Proposition~\ref{prop:spin} that underlies our spectrahedral
representation of $SO(n)^\circ$ given in Theorem~\ref{thm:sonpolar}. The
mapping $Q$ is most naturally described in terms of an algebraic structure
known as a Clifford algebra, which generalizes some properties of  complex
numbers and quaternions.  The first part of this section is devoted to
describing the basic properties of Clifford algebras we require. In
Section~\ref{sec:quadmap} we describe the mapping $Q$ and some of its
properties.  In Section~\ref{sec:spinprop} we define the set $\Spin(n)$ and
establish enough of its properties to prove
Propositions~\ref{prop:spin},~\ref{prop:basis},
and~\ref{prop:spinequivariance}. We prove Proposition~\ref{prop:mef} in
Section~\ref{sec:mef}.

Many of the constructions and properties we describe here are standard and can
be found, for example, in~\cite{atiyah1964clifford,harvey1990spinors}. We
highlight those aspects of the development that are novel as they arise.

\subsection{Clifford algebras}
\label{sec:clifford}
\paragraph{Definition}
The Clifford algebra $\Cl(n)$ is the associative algebra\footnote{That such an
algebra exists and is unique up to isomorphism follows because it can be
realized as a quotient of the tensor algebra.} (over the reals) with generators
$e_1,e_2,\ldots,e_n$ and relations 
\begin{equation}
    \label{eq:clifrel}
    e_i^2 = -\cliffid\quad\text{and}\quad e_ie_j = -e_je_i.
\end{equation}
Here $\cliffid$ denotes the multiplicative identity in the algebra. 

\paragraph{Standard basis}
As a real vector space $\Cl(n)$ has dimension $2^{n}$. A basis for $\Cl(n)$ is
given by all elements of the form \[ e_I := e_{i_1}e_{i_2}\cdots e_{i_k}\]
where $I = \{i_1,i_2,\ldots,i_k\}$ is a subset of $[n]$ and $i_1<i_2<\cdots <
i_k$. Here $e_{\emptyset}:=\cliffid$ is the multiplicative identity element in
$\Cl(n)$. Let us call $(e_I)_{I\subset[n]}$ the \emph{standard basis} for
$\Cl(n)$. With respect to this basis we can think of an arbitrary element $x\in
\Cl(n)$ as \[ x = \sum_{I\subset [n]} x_I e_I\] where the $x_I\in \R$. We equip
$\Cl(n)$ with the inner product $\langle x,y\rangle = \sum_{I\subset [n]}
x_Iy_I$. Clearly the standard basis is orthonormal with respect to this inner
product.

\paragraph{Left and right multiplication}
Any element $x\in \Cl(n)$ acts linearly on $\Cl(n)$ by left multiplication and
by right multiplication.  In other words, given $x\in \Cl(n)$ there are linear
maps $\lambda_x,\rho_x: \Cl(n)\rightarrow \Cl(n)$ defined by $\lambda_x(y) =
xy$ and $\rho_x(y) = yx$ for all $y\in \Cl(n)$. 

It is clear from the relations~\eqref{eq:clifrel} that $\lambda_{e_i}$ and
$\rho_{e_j}$ act on the standard basis of $\Cl(n)$ by signed permutations.
Specifically, if $I,J\subset [n]$, the corresponding entry of the signed
permutation matrix $[\lambda_{e_i}]$ is 
\[ [\lambda_{e_i}]_{I,J} = \begin{cases}
    (-1)^{|\{k\in I: k \leq i\}|} & \text{if $J = I\symdiff\{i\}$}\\
0 & \text{otherwise}\end{cases}\]
        and the $I,J$ entry of $[\rho_{e_i}]$ is
\[ [\rho_{e_i}]_{I,J} = \begin{cases}
    (-1)^{|\{k\in I: k \geq i\}|} & \text{if $J = I\symdiff\{i\}$}\\
0 & \text{otherwise}\end{cases}\]
        (where for two sets $I,J\subset [n]$, $I\symdiff J = (I\setminus
        J)\cup(J \setminus I)$ is their symmetric difference).  Hence both
        $\lambda_{e_i}$ and $\rho_{e_i}$ are skew-symmetric. Concise
        descriptions of these matrices that are particularly useful for
        implementation are
\begin{align*}
[\lambda_{e_i}] & = \overbrace{\Rtwo \otimes \cdots \otimes \Rtwo}^{i-1} \otimes \begin{bmatrix} 0 & -1\\1 & 0\end{bmatrix}\otimes 
    \overbrace{\Itwo \otimes \cdots \otimes \Itwo}^{n-i}\quad\text{and}\\
[\rho_{e_i}] & = \underbrace{\Itwo \otimes \cdots \otimes \Itwo}_{i-1} \otimes \begin{bmatrix} 0 & -1\\1 & 0\end{bmatrix}\otimes 
    \underbrace{\Rtwo \otimes \cdots \otimes \Rtwo}_{n-i}.
\end{align*}

\paragraph{Conjugation}
Observe that since $\lambda_{e_i}^* = -\lambda_{e_i} = \lambda_{-e_i}$ the
adjoint of left multiplication by $e_i$ is left multiplication by $-e_i$.
Similarly the adjoint of right multiplication by $e_i$ is right multiplication
by $-e_i$. In fact, it is the case that for any $x\in \Cl(n)$ there is
$\conj{x}\in \Cl(n)$ such that $\lambda_x^* = \lambda_{\conj{x}}$ and
$\rho_{x}^* = \rho_{\conj{x}}$. To see this define a \emph{conjugation} map
$x\mapsto \conj{x}$ on the standard basis by
\[ \conj{e_I} = (-1)^{|I|}e_{i_k}\cdots e_{i_2}e_{i_1}\]
and extend by linearity. It is easy to see by direct computation that
$\lambda_{e_I}^* = \lambda_{\conj{e_I}}$ and $\rho_{e_I}^* = \rho_{\conj{e_I}}$
as required.  We use this conjugation map repeatedly in the sequel, usually via
the relations
\begin{equation}
    \label{eq:leftadj}\langle xy,z\rangle = \langle \lambda_xy,z\rangle = \langle y,\lambda_x^*z\rangle = \langle y,\lambda_{\conj{x}}z\rangle = \langle y,\conj{x}z\rangle
\end{equation}
and  
\begin{equation}
\label{eq:rightadj}
\langle yx,z\rangle = \langle \rho_xy,z\rangle = \langle y,\rho_x^*z\rangle = \langle y,\rho_{\conj{x}}z\rangle = \langle y,z\conj{x}\rangle.
\end{equation}

\paragraph{Copy of $\R^n$ in $\Cl(n)$}
Throughout this appendix, we use the notation $\R^n$ to denote the
$n$-dimensional subspace of $\Cl(n)$ spanned by the generators
$e_1,e_2,\ldots,e_n$, and the notation $S^{n-1}\subset \R^{n}$ to denote the
elements $x\in \R^n$ satisfying $\langle x,x\rangle = 1$. We next state and
prove some basic properties of the elements of $S^{n-1}\subset \Cl(n)$.
\begin{lemma}
    \label{lem:sphereinv}
    If $u\in S^{n-1}\subset \Cl(n)$ then $u \conj{u} = \cliffid$. Consequently
    $\langle uy,uz \rangle = \langle y,x\rangle = \langle yu,zu\rangle$ for all
    $y,z\in \Cl(n)$. 
\end{lemma}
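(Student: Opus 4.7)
The plan is straightforward, since the machinery is already in place: I just need to exploit the Clifford relations together with the conjugation identities \eqref{eq:leftadj} and \eqref{eq:rightadj}.

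First I would observe that for any $u = \sum_{i=1}^n u_i e_i$ in the subspace $\R^n \subset \Cl(n)$ spanned by the generators, the conjugation map acts by $\conj{u} = -u$. This is immediate from the definition of conjugation on the standard basis: $\conj{e_i} = (-1)^{|\{i\}|} e_i = -e_i$, extended by linearity.

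Next I would compute $u^2$ directly from the Clifford relations \eqref{eq:clifrel}. Expanding,
\[ u^2 = \sum_{i,j=1}^n u_i u_j e_i e_j = \sum_{i=1}^n u_i^2 e_i^2 + \sum_{i<j} u_i u_j (e_i e_j + e_j e_i) = -\langle u,u\rangle\, \cliffid,\]
since the cross terms cancel by anticommutativity and each $e_i^2 = -\cliffid$. For $u \in S^{n-1}$ this gives $u^2 = -\cliffid$, and hence $u\conj{u} = -u^2 = \cliffid$. The same calculation shows $\conj{u} u = (-u)u = -u^2 = \cliffid$ as well.

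For the inner product identities, I would apply the adjoint relations together with associativity in $\Cl(n)$. Using \eqref{eq:leftadj},
\[ \langle uy, uz\rangle = \langle y, \conj{u}(uz)\rangle = \langle y, (\conj{u} u) z\rangle = \langle y, z\rangle,\]
and symmetrically using \eqref{eq:rightadj},
\[ \langle yu, zu\rangle = \langle y, (zu)\conj{u}\rangle = \langle y, z(u\conj{u})\rangle = \langle y, z\rangle.\]
There is no serious obstacle here; the only subtlety is making sure to use both $u\conj{u} = \cliffid$ and $\conj{u} u = \cliffid$, which is why I verified both above. Everything else is a direct application of associativity and the identities already established in this section.
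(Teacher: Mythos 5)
Your proof is correct and follows essentially the same route as the paper: the paper's proof also obtains $u\conj{u}=\cliffid$ by a direct computation from the relations~\eqref{eq:clifrel} and then deduces the inner product identities from~\eqref{eq:leftadj} and~\eqref{eq:rightadj}; you have simply written out the details (including checking $\conj{u}u=\cliffid$), and you correctly read the statement's $\langle y,x\rangle$ as the intended $\langle y,z\rangle$.
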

\begin{proof}
    The second statement follows from the first together
    with~\eqref{eq:leftadj} and~\eqref{eq:rightadj}. That $u\conj{u} =
    \cliffid$ whenever $u\in S^{n-1}$ follows from a direct computation using
    the defining relations of $\Cl(n)$ from~\eqref{eq:clifrel}.
 \end{proof}
 The following can be established by repeatedly applying Lemma~\ref{lem:sphereinv}.
 \begin{corollary}
     \label{cor:sphere}
     If $u_1,u_2,\ldots,u_k\in S^{n-1}$ then $\langle u_1u_2\cdots u_k,u_1u_2\cdots u_k\rangle = 1$.
 \end{corollary}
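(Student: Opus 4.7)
The plan is to prove Corollary~\ref{cor:sphere} by a straightforward induction on $k$, applying Lemma~\ref{lem:sphereinv} at each step. The base case $k=1$ is immediate since $u_1 \in S^{n-1}$ means $\langle u_1, u_1\rangle = 1$ by definition.

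For the inductive step, I would assume that $\langle u_2 u_3 \cdots u_k, u_2 u_3 \cdots u_k\rangle = 1$ and set $w := u_2 u_3 \cdots u_k$. Then $u_1 u_2 \cdots u_k = u_1 w$, and since $u_1 \in S^{n-1}$, the second conclusion of Lemma~\ref{lem:sphereinv} (applied with $y = z = w$) gives
\[
\langle u_1 w, u_1 w\rangle = \langle w, w\rangle = 1,
\]
completing the induction.

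There is really no obstacle here: the corollary is essentially a repeated application of the fact that left multiplication by a unit vector $u \in S^{n-1}$ is an isometry of $\Cl(n)$ (which is exactly what the identity $\langle uy, uz\rangle = \langle y, z\rangle$ from Lemma~\ref{lem:sphereinv} says). The only small point to double-check is that one uses the left-multiplication form of the identity (rather than the right-multiplication form) so that the induction peels off the leftmost factor, but either form works by symmetry.
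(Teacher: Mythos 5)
Your proof is correct and matches the paper's intent exactly: the paper simply states that the corollary follows ``by repeatedly applying Lemma~\ref{lem:sphereinv},'' and your induction is the natural formalization of that, peeling off one factor at a time using the isometry property $\langle u y, u z\rangle = \langle y, z\rangle$ for $u \in S^{n-1}$.
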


\paragraph{Even subalgebra}
Consider the subspaces $\Cl^0(n)$ and $\Cl^1(n)$ of $\Cl(n)$ defined by
\[ \Cl^0(n) = \textup{span}\{e_I: \text{$I\subset [n]$, $|I|$ even}\}\quad\text{and}\quad
 \Cl^1(n) = \textup{span}\{e_I: \text{$I\subset [n]$, $|I|$ odd}\}.\] It is
 straightforward to show that if $x,y\in \Cl^0(n)$ then $xy\in \Cl^0(n)$, and
 if $x,y\in \Cl^1(n)$ then $xy\in \Cl^0(n)$.  The first of these properties
 states that $\Cl^0(n)$ is a subalgebra of $\Cl(n)$, which we call the
 \emph{even subalgebra}. With these properties we have that the product of an
 even number of elements of $S^{n-1}$ is in the even subalgebra.
\begin{lemma}
    \label{lem:even}
    If $u_1,u_2,\ldots,u_{2k}\in S^{n-1}$ then $x = u_1u_2\cdots u_{2k}\in \Cl^0(n)$.
\end{lemma}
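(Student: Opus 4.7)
The plan is to reduce the lemma to the two parity-closure facts stated just above it, namely that $\Cl^0(n)\cdot\Cl^0(n)\subset\Cl^0(n)$ and $\Cl^1(n)\cdot\Cl^1(n)\subset\Cl^0(n)$. The only substantive observation needed beyond these is that each factor $u_i$ lies in $\Cl^1(n)$. Indeed, $S^{n-1}\subset\R^n=\spn\{e_1,\ldots,e_n\}$ by the convention fixed earlier in the appendix, and each generator $e_i=e_{\{i\}}$ is a basis element indexed by a singleton, which has odd cardinality. Hence $\R^n\subset\Cl^1(n)$, and in particular $u_i\in\Cl^1(n)$ for every $i$.

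Once this is in hand, I would finish by a short induction on $k$. For the base case $k=1$ the claim is exactly the second parity-closure fact applied to $u_1,u_2\in\Cl^1(n)$, giving $u_1u_2\in\Cl^0(n)$. For the inductive step, assuming $u_1u_2\cdots u_{2k}\in\Cl^0(n)$, write
\[
u_1u_2\cdots u_{2k+2}=(u_1u_2\cdots u_{2k})\cdot(u_{2k+1}u_{2k+2}).
\]
The first factor is in $\Cl^0(n)$ by the inductive hypothesis and the second is in $\Cl^0(n)$ by the base case, so the first parity-closure fact gives the product in $\Cl^0(n)$.

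There is no real obstacle: the associativity of the Clifford product lets us associate the $2k$ factors into $k$ pairs of odd elements, each of which lands in $\Cl^0(n)$, and then the even subalgebra property absorbs the remaining products. The only point worth being careful about is the identification $\R^n\subset\Cl^1(n)$, which is just a matter of unwinding the indexing convention for the standard basis $(e_I)_{I\subset[n]}$.
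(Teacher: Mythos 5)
Your proposal is correct and follows essentially the same route as the paper: both note that $S^{n-1}\subset\R^n\subset\Cl^1(n)$, pair up the $2k$ factors so each adjacent pair $u_{2i-1}u_{2i}$ lands in $\Cl^0(n)$, and then use closure of the even subalgebra under multiplication (your induction on $k$ merely makes this pairing argument explicit).
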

\begin{proof}
    Since $S^{n-1}\subset \R^n\subset \Cl^1(n)$, each $u_i\in \Cl^1(n)$.  Hence
    $u_{2i-1}u_{2i}\in \Cl^0(n)$ for $i=1,2,\ldots,k$. So $u_1u_2\cdots u_{2k}
    = (u_1u_2)(u_3u_4)\cdots (u_{2k-1}u_{2k})$ is the product of elements in
    $\Cl^0(n)$ so is itself an element of $\Cl^0(n)$. 
\end{proof}

The final property of elements of $\R^n\subset \Cl(n)$ we use in the sequel is
the coordinate-free version of the defining relations of $\Cl(n)$ given
in~\eqref{eq:clifrel}. 
\begin{lemma}
    If $u,v\in \R^n$ then 
\begin{equation}
    \label{eq:gr1rel}
    uv+vu = -2\langle u,v\rangle\,\cliffid.
\end{equation}
\end{lemma}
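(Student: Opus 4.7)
The plan is to reduce the identity to the defining relations \eqref{eq:clifrel} on generators by bilinearity. Both sides of the claimed identity are bilinear functions of $u$ and $v$, so it suffices to verify the identity when $u = e_i$ and $v = e_j$ range over the generators. For $i \neq j$ the relation $e_ie_j = -e_je_i$ from \eqref{eq:clifrel} gives $e_ie_j + e_je_i = 0 = -2\langle e_i,e_j\rangle\,\cliffid$, while for $i = j$ we have $e_ie_i + e_ie_i = 2e_i^2 = -2\,\cliffid = -2\langle e_i,e_i\rangle\,\cliffid$. Thus the identity holds on the standard basis of $\R^n \subset \Cl(n)$, and bilinearity extends it to all $u,v \in \R^n$.

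Concretely I would write $u = \sum_{i=1}^n u_i e_i$ and $v = \sum_{j=1}^n v_j e_j$, expand
\[ uv + vu = \sum_{i,j=1}^{n} u_i v_j\,(e_ie_j + e_je_i), \]
and plug in the two cases above. The off-diagonal terms cancel and the diagonal terms collapse to $-2\sum_i u_iv_i\,\cliffid = -2\langle u,v\rangle\,\cliffid$, which is the desired identity.

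An alternative, slightly more invariant route is via polarization: first show $u^2 = -\langle u,u\rangle\,\cliffid$ for every $u \in \R^n$ (for $u$ of unit norm this follows from Lemma~\ref{lem:sphereinv} together with the observation that $\conj{u} = -u$ whenever $u \in \R^n$, since $\conj{e_i} = -e_i$; the general case then follows by homogeneity), and then compute
\[ (u+v)^2 - u^2 - v^2 = uv + vu, \]
so that $uv+vu = -\langle u+v,u+v\rangle\,\cliffid + \langle u,u\rangle\,\cliffid + \langle v,v\rangle\,\cliffid = -2\langle u,v\rangle\,\cliffid$.

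There is no genuine obstacle here; the statement is just the coordinate-free repackaging of \eqref{eq:clifrel}, and the only point to be careful about is confirming that $\conj{u} = -u$ for $u \in \R^n$ if one takes the polarization route.
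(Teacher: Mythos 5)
Your main argument is correct and is essentially the paper's own proof: reduce by bilinearity to the generators $e_i,e_j$ and observe that the identity there is just a restatement of the defining relations \eqref{eq:clifrel} together with $\langle e_i,e_j\rangle=\delta_{ij}$. The polarization variant you sketch is also fine, but it adds nothing beyond the direct check.
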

\begin{proof}
    First note that~\eqref{eq:gr1rel} is bilinear in $u$ and $v$ so it suffices
    to verify the identity for $u=e_i$ and $v=e_j$ (for all $1\leq i,j\leq n$).
    That the statement holds for $u=e_i$ and $v=e_j$ (for all $1\leq i,j\leq
    n$) is equivalent to the relations~\eqref{eq:clifrel} (since 
    $\langle e_i,e_j\rangle = \delta_{ij}$). 
\end{proof}

\subsection{The quadratic mapping}
\label{sec:quadmap}
We now define and establish the relevant properties of the quadratic mapping $Q:\Cl^0(n)\rightarrow \R^{n\times n}$
that plays a prominent role in Section~\ref{sec:gen}. Our aim is to prove Proposition~\ref{prop:spin}.
First define $\tildeQ:\Cl(n)\rightarrow \R^{n\times n}$ by
\[ \tildeQ(x)(u) = \projto{\R^n}\lambda_x\rho_{\conj{x}}(u) = \projto{\R^n}(xu\conj{x}).\]
Note that $\tildeQ(x)$ is quadratic in $x$.
When we express the linear map $\tildeQ(x)$ as a matrix (with respect to the standard basis) we see that
    \[ [\tildeQ(x)]_{ij} = \langle e_i,xe_j\conj{x}\rangle.\]
Then define $Q:\Cl^0(n)\rightarrow \R^{n\times n}$ as the restriction of $\tildeQ$ to the subalgebra $\Cl^0(n)$. 

This construction is motivated by the fact that if $u\in S^{n-1}$ then $-\tildeQ(u)$ is the 
reflection in the hyperplane orthogonal to $u$. 
\begin{lemma}
    \label{lem:reflect}
    Let $u\in S^{n-1}$. Then whenever $v\in \R^n$, $-uv\conj{u}\in \R^n$ is the reflection of 
    $v$ in the hyperplane normal to $u$. In particular $-uv\conj{u}\in \R^n$. 
\end{lemma}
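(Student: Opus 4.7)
The plan is to reduce the statement to the classical reflection formula $v - 2\langle u,v\rangle u$ using the Clifford algebra relations already established in the appendix. The key observation is that the conjugation map is built to make this computation almost automatic.

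First I would unpack $\conj{u}$. Since $u\in \R^n$ is a linear combination of the generators $e_1,\ldots,e_n$ and each generator is a basis element $e_I$ with $|I|=1$, the definition $\conj{e_I} = (-1)^{|I|}\cdots$ gives $\conj{e_i} = -e_i$, so by linearity $\conj{u} = -u$. Hence $-uv\conj{u} = uvu$, and the problem reduces to computing $uvu$ for $u,v\in \R^n$ with $\langle u,u\rangle = 1$.

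Next I would apply the coordinate-free relation $uv + vu = -2\langle u,v\rangle\,\cliffid$ from the lemma just above (equation \eqref{eq:gr1rel}) to rewrite $uv = -vu - 2\langle u,v\rangle\,\cliffid$. Multiplying on the right by $u$ gives
\[
uvu = -vu^2 - 2\langle u,v\rangle u.
\]
Applying the same relation with $v$ replaced by $u$ yields $u^2 = -\langle u,u\rangle\,\cliffid = -\cliffid$ since $u\in S^{n-1}$. Substituting, $uvu = v - 2\langle u,v\rangle u$, which is manifestly an element of $\R^n$ (a linear combination of $u$ and $v$) and is precisely the reflection of $v$ in the hyperplane normal to $u$. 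This proves both the membership claim and the identification with the reflection simultaneously.

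There is no real obstacle here: the content of the lemma is essentially that the Clifford relations were set up to encode reflections, and the computation is three lines once one notes $\conj{u} = -u$ for $u \in \R^n$. The only point requiring mild care is tracking the sign introduced by $\conj{u} = -u$, which is why the statement has the prefactor $-$ in $-uv\conj{u}$ rather than $uv\conj{u}$.
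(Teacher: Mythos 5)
Your proof is correct and follows essentially the same route as the paper: both reduce the claim to the relation $uv+vu=-2\langle u,v\rangle\,\cliffid$ together with $\conj{u}=-u$, the only cosmetic difference being that you derive $u^2=-\cliffid$ directly from that relation while the paper invokes $u\conj{u}=\cliffid$ from Lemma~\ref{lem:sphereinv}. No gaps.
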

\begin{proof}
    Let $u\in S^{n-1}$. Then by~\eqref{eq:gr1rel}, if $v\in \R^n$ then $-uv =
    2\langle u,v\rangle \cliffid + vu$ and so since $u\conj{u}=\cliffid$ and
    $\conj{u}=-u$ it follows that
        \[
        -uv\conj{u}  = 2\langle u,v\rangle \conj{u} + vu\conj{u} = v-2\langle u,v\rangle u
    \]
    which is precisely the reflection in the hyperplane orthogonal to $u$ and
    is certainly in $\R^n$.
\end{proof}
Note that our definition of $\tildeQ$ is one possible extension to all of
$\Cl(n)$ of the map that sends $u\in S^{n-1}$ to the reflection in the
hyperplane orthogonal to $u$.  It is specifically chosen so as to be quadratic
on all of $\Cl(n)$. Our choice is different from the typical extension used in
the literature---the \emph{twisted adjoint representation}
\cite{atiyah1964clifford}--- which is \emph{not} quadratic in $x$ on all of
$\Cl(n)$ and is not suitable for our purposes.

\begin{lemma}
    \label{lem:hom}
    Let $x\in \Cl(n)$ and $u\in S^{n-1}$. Then 
    \[ \tildeQ(xu) = \tildeQ(x)\tildeQ(u)\quad\text{and}\quad
    \tildeQ(ux) = \tildeQ(u)\tildeQ(x)
    \]
    where the product on the right hand side of each equality is composition of linear maps.
\end{lemma}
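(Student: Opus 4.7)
Both identities will be established by unfolding the definition of $\tildeQ$ and using two key facts: Clifford conjugation is an antiautomorphism, $\conj{xy} = \conj{y}\conj{x}$ (which follows immediately from the definition on the standard basis), and Lemma~\ref{lem:reflect}, which says that for $u \in S^{n-1}$ and $v \in \R^n$ the element $uv\conj{u}$ already lies in $\R^n$. The consequence I will exploit is that $\tildeQ(u)(v) = uv\conj{u}$ with no projection required, which is the essential simplification that makes the semigroup relation work.

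For $\tildeQ(xu) = \tildeQ(x)\tildeQ(u)$, given $v \in \R^n$ I expand
\[
\tildeQ(xu)(v) = \projto{\R^n}\bigl((xu)\,v\,\conj{xu}\bigr) = \projto{\R^n}\bigl(x\,(uv\conj{u})\,\conj{x}\bigr),
\]
using the antiautomorphism property. Since $uv\conj{u} \in \R^n$ by Lemma~\ref{lem:reflect}, the right-hand side equals $\tildeQ(x)(uv\conj{u}) = \tildeQ(x)(\tildeQ(u)(v))$, which is precisely the desired identity.

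The second identity $\tildeQ(ux) = \tildeQ(u)\tildeQ(x)$ is subtler and I expect it to be the only real obstacle. The analogous unfolding gives $\tildeQ(ux)(v) = \projto{\R^n}\bigl(u(xv\conj{x})\conj{u}\bigr)$, while $\tildeQ(u)(\tildeQ(x)(v)) = u\,\projto{\R^n}(xv\conj{x})\,\conj{u}$ (the outer projection disappearing by Lemma~\ref{lem:reflect} applied to $\projto{\R^n}(xv\conj x)\in\R^n$). Thus the proof reduces to verifying that the map $\phi_u : w \mapsto uw\conj{u}$ on $\Cl(n)$ commutes with $\projto{\R^n}$, i.e.\ $\projto{\R^n}(uw\conj{u}) = u\,\projto{\R^n}(w)\,\conj{u}$ for every $w \in \Cl(n)$. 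My plan is to show that $\phi_u$ preserves the orthogonal decomposition $\Cl(n) = \R^n \oplus (\R^n)^\perp$; the commutation then follows by splitting $w$ into its two components. Two applications of Lemma~\ref{lem:sphereinv} show that $\phi_u$ is orthogonal, since $\langle uy\conj{u}, uz\conj{u}\rangle = \langle y\conj{u}, z\conj{u}\rangle = \langle y, z\rangle$. By Lemma~\ref{lem:reflect}, $\phi_u(\R^n) \subseteq \R^n$; since $\phi_u$ is invertible (with inverse $\phi_{\conj{u}}$, using $u\conj{u} = \cliffid$ from Lemma~\ref{lem:sphereinv}), equality holds, and orthogonality then forces $\phi_u\bigl((\R^n)^\perp\bigr) = (\R^n)^\perp$ as well, completing the argument.
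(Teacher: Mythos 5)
Your proof is correct and follows essentially the same route as the paper: the first identity is handled by noting $uv\conj{u}\in\R^n$ (Lemma~\ref{lem:reflect}), and the second by showing the conjugation $w\mapsto uw\conj{u}$ preserves both $\R^n$ and $(\R^n)^\perp$ so that it commutes with $\projto{\R^n}$ --- the paper phrases this as splitting $I = \projtofrom{\R^n}+\projtofrom{{\R^n}^\perp}$ and observing the perpendicular term projects to zero. Your explicit justification that the orthogonal complement is preserved (via Lemma~\ref{lem:sphereinv} and invertibility) fills in a step the paper only asserts.
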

\begin{proof}
    If $u\in S^{n-1}$, we know from the previous lemma that $v\mapsto
    uv\conj{u}$ leaves the subspace $\R^n$ (and hence its orthogonal
    complement) invariant. So by the definition of $\tildeQ$ we see that
    \[ \tildeQ(xu)(v) = \projto{\R^n}(xuv\conj{u}\,\conj{x}) = \projto{\R^n}(x\projtofrom{\R^n}(uv\conj{u})\conj{x}) = \tildeQ(x)(\tildeQ(u)(v)).\]
    Similarly since $\projtofrom{\R^n}+\projtofrom{{\R^n}^\perp} = I$, 
    \[ \tildeQ(ux)(v) = \projto{\R^n}(uxv\conj{x}\,\conj{u}) = \projto{\R^n}(u\projtofrom{\R^n}(xv\conj{x})\conj{u}) + 
    \projto{\R^n}(u\projtofrom{{\R^n}^\perp}(xv\conj{x})\conj{u}) = Q(u)(Q(x)(v)) + 0\]
    where we have used the fact that $uy\conj{u}\in {\R^{n}}^\perp$ whenever $y\in {\R^n}^\perp$.
\end{proof}

\subsection{$\Spin(n)$ and the proofs of Propositions~\ref{prop:spin},~\ref{prop:basis} and~\ref{prop:spinequivariance}}
\label{sec:spinprop}
\begin{definition}
    Define $\Spin(n)$ as the set of all
    even length products of elements of $S^{n-1}$:
    \[ \Spin(n) = \{x\in \Cl(n): x = u_1u_2\cdots u_{2k}\;\text{for some positive integer $k$ and $u_1,\ldots,u_{2k}\in S^{n-1}$}\}.
    \]
\end{definition}
Although we do not require this fact, it can be shown that in the above
definition it is enough to take $k= \lfloor n/2\rfloor$. We note that a common
alternative definition~\cite{atiyah1964clifford} is to take $\Spin(n)$ to be
the elements of $\Cl^0(n)$ satisfying $x\conj{x} = \cliffid$ and $xv\conj{x}
\in \R^n$ for every $v\in \R^n$ (which defines a real algebraic variety
specified by the vanishing of a collection of quadratic equations). It is
fairly straightforward to establish that these two definitions are equivalent.

The next result establishes that $\Spin(n)$ is a group under multiplication. 
\begin{lemma}
    \label{lem:group}
    If $x\in \Spin(n)$ then $\conj{x}x = x\conj{x} = \cliffid$. If $x,y\in
    \Spin(n)$ then $xy\in \Spin(n)$. 
\end{lemma}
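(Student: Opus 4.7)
The plan is to handle the two assertions separately. The closure statement is essentially immediate from the definition: if $x = u_1 \cdots u_{2k}$ and $y = v_1 \cdots v_{2\ell}$ with all factors in $S^{n-1}$, then $xy = u_1 \cdots u_{2k} v_1 \cdots v_{2\ell}$ is a product of $2(k+\ell)$ such factors, so $xy \in \Spin(n)$. The substantive part is the involution identity $\conj{x}x = x\conj{x} = \cliffid$, and the plan is to prove this by expressing $\conj{x}$ as a reversed product and then telescoping.

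The first supporting step is to establish the anti-homomorphism property $\conj{xy} = \conj{y}\,\conj{x}$ for all $x,y\in\Cl(n)$. This falls out of the adjoint characterization already recorded in Section~\ref{sec:clifford}: since $\lambda_x^* = \lambda_{\conj{x}}$ and $\lambda_{xy} = \lambda_x\lambda_y$, one has
\[ \lambda_{\conj{xy}} \;=\; \lambda_{xy}^* \;=\; (\lambda_x\lambda_y)^* \;=\; \lambda_y^*\lambda_x^* \;=\; \lambda_{\conj{y}\,\conj{x}}, \]
and the conclusion follows because the left-multiplication map $z\mapsto\lambda_z$ is injective (evaluate at $\cliffid$).

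The second supporting step is to note that for each factor $u\in S^{n-1}\subset\R^n$, the definition $\conj{e_I} = (-1)^{|I|}e_{i_k}\cdots e_{i_1}$ restricted to $|I|=1$ together with linearity gives $\conj{u} = -u$. Combined with Lemma~\ref{lem:sphereinv}, which yields $u\conj{u} = \cliffid$, this forces $u^2 = -\cliffid$ and therefore also $\conj{u}\,u = -u^2 = \cliffid$. So both $\conj{u_i}u_i = \cliffid$ and $u_i\conj{u_i} = \cliffid$ are available as the ``cancellation rule''.

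With these pieces in hand, for $x = u_1u_2\cdots u_{2k}\in\Spin(n)$ the anti-homomorphism produces $\conj{x} = \conj{u_{2k}}\cdots\conj{u_2}\,\conj{u_1}$, and then
\[ \conj{x}\,x \;=\; \conj{u_{2k}}\cdots\conj{u_2}\,(\conj{u_1}u_1)\,u_2\cdots u_{2k} \]
collapses inside-out, one pair at a time, to $\cliffid$. The identity $x\conj{x}=\cliffid$ is proved symmetrically, telescoping outside-in using $u_i\conj{u_i}=\cliffid$. The only step with any real content is the anti-homomorphism property; once that is in place the proof is a short telescoping computation, and the closure claim requires nothing beyond the definition of $\Spin(n)$.
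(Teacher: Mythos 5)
Your proof is correct and follows the same approach as the paper: closure under multiplication is immediate from the definition, and the inverse property is a telescoping argument based on $u\conj{u} = \conj{u}u = \cliffid$ for $u \in S^{n-1}$ (the paper's Lemma~\ref{lem:sphereinv}). The paper's proof is terse and leaves the anti-homomorphism property $\conj{xy}=\conj{y}\,\conj{x}$ implicit; your derivation of it from $\lambda_x^*=\lambda_{\conj{x}}$ and the injectivity of $z\mapsto\lambda_z$ is a clean way to fill that gap.
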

\begin{proof}
    That $\Spin(n)$ is closed under multiplication is clear from the
    definition. That conjugation and inversion coincide on $\Spin(n)$ follows
    from Lemma~\ref{lem:sphereinv}.
\end{proof}
The following result establishes Proposition~\ref{prop:spin} and
Proposition~\ref{prop:basis}.
\begin{lemma}
    \label{lem:qonto}
    $\Spin(n)$ is a subset of the unit sphere in $\Cl^0(n)$, i.e.
    $\Spin(n)\subset \{x\in \Cl^0(n): \langle x,x\rangle = 1\}$, satisfying
    $Q(\Spin(n)) = SO(n)$. Furthermore whenever $I\subset [n]$ has even
    cardinality, $e_I \in \Spin(n)$. 
\end{lemma}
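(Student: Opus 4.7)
The plan is to address the three assertions in turn, drawing on Lemmas~\ref{lem:sphereinv}, \ref{lem:even}, \ref{lem:reflect}, \ref{lem:hom} and Corollary~\ref{cor:sphere}.

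First I would verify that $\Spin(n)$ sits inside the unit sphere of $\Cl^0(n)$. Membership in $\Cl^0(n)$ is immediate from Lemma~\ref{lem:even}, since any $x = u_1 u_2 \cdots u_{2k}$ with $u_i \in S^{n-1} \subset \R^n \subset \Cl^1(n)$ is an even-length product. Unit norm follows directly from Corollary~\ref{cor:sphere}.

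Next I would tackle the inclusion $Q(\Spin(n)) \subseteq SO(n)$. Given $x = u_1 u_2 \cdots u_{2k}$, iterating Lemma~\ref{lem:hom} gives $\tildeQ(x) = \tildeQ(u_1)\tildeQ(u_2)\cdots \tildeQ(u_{2k})$ as a composition of linear maps. By Lemma~\ref{lem:reflect}, each $\tildeQ(u_i)$ preserves $\R^n$ and satisfies $\tildeQ(u_i) = -R_{u_i}$, where $R_{u_i}$ is the reflection in the hyperplane orthogonal to $u_i$. Because there are an even number of factors, the $2k$ sign flips cancel, so $\tildeQ(x) = R_{u_1} R_{u_2}\cdots R_{u_{2k}}$ as an endomorphism of $\R^n$, which is an orthogonal transformation with determinant $(-1)^{2k}=1$. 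Restricting $\tildeQ$ to $\Cl^0(n)$ gives $Q$, so $Q(x) \in SO(n)$.

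For the reverse inclusion $SO(n) \subseteq Q(\Spin(n))$, I would invoke the Cartan--Dieudonné theorem: every $g \in SO(n)$ can be written as a product of an even number of hyperplane reflections, $g = R_{u_1} R_{u_2} \cdots R_{u_{2k}}$ with $u_i \in S^{n-1}$. Reversing the sign-cancellation of the previous paragraph, $g = \tildeQ(u_1)\tildeQ(u_2)\cdots \tildeQ(u_{2k}) = \tildeQ(u_1 u_2 \cdots u_{2k}) = Q(x)$ for $x = u_1 u_2 \cdots u_{2k} \in \Spin(n)$. This is really the only non-bookkeeping step, and the main obstacle is simply recognizing that Cartan--Dieudonné is the right black box to quote.

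Finally, for the basis claim: if $I = \{i_1 < \cdots < i_{2k}\} \subset [n]$ has even cardinality $2k \geq 2$, then $e_I = e_{i_1} e_{i_2} \cdots e_{i_{2k}}$ is by construction an even-length product of elements of $S^{n-1}$, hence in $\Spin(n)$. For $I = \emptyset$, we have $e_\emptyset = \cliffid = e_1 \cdot (-e_1)$, which is a length-two product of elements of $S^{n-1}$, so $\cliffid \in \Spin(n)$ as well. This handles all subsets of $[n]$ of even cardinality.
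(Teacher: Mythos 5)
Your proof is correct and follows essentially the same route as the paper's: unit-norm membership in $\Cl^0(n)$ via Lemma~\ref{lem:even} and Corollary~\ref{cor:sphere}, the two inclusions for $Q(\Spin(n)) = SO(n)$ via Lemmas~\ref{lem:reflect} and~\ref{lem:hom} together with Cartan--Dieudonn\'{e}, and the factorization $e_I = e_{i_1}\cdots e_{i_{2k}}$ for the basis claim. If anything, you are slightly more careful than the paper, making the cancellation of the $2k$ signs from $\tildeQ(u_i) = -R_{u_i}$ explicit and handling the $I = \emptyset$ case via $\cliffid = e_1\cdot(-e_1)$, which the paper's proof leaves implicit.
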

\begin{proof}
    That $\Spin(n)\subset \{x\in \Cl^0(n): \langle x,x\rangle =1\}$ follows
    directly from Lemma~\ref{lem:even} and Corollary~\ref{cor:sphere}.  Let
    $X\in SO(n)$. By the Cartan-Dieudonn\'{e} theorem \cite{gallier2001cartan}
    any such $X$ can be expressed as the composition of an even number (at most
    $n$) of reflections in hyperplanes with normal vectors, say,
    $u_1,u_2,\ldots,u_{2k}\in S^{n-1}$. Let $x = u_1u_2\cdots u_{2k-1}u_{2k}\in
    \Spin(n)$.  Then by Lemma~\ref{lem:reflect} and Lemma~\ref{lem:hom} and the
    fact that $Q$ is the restriction of $\tildeQ$ to $\Cl^0(n)$, 
    \[ X = \tildeQ(u_{1})\tildeQ(u_{2})\cdots \tildeQ(u_{2k-1})\tildeQ(u_{2k}) = \tildeQ(x) = Q(x)\in Q(\Spin(n)).\]
    Hence $SO(n)\subseteq Q(\Spin(n))$. On the other hand, if $x=u_1u_2\cdots
    u_{2k-1}u_{2k}\in \Spin(n)$ then $Q(x)$ is the product of an even number of
    reflections in hyperplanes and so is an element of $SO(n)$, establishing
    the reverse inclusion.
    
    For the last statement, let $I = \{i_1,\ldots,i_{2k}\}$ be a subset of
    $[n]$ with even cardinality and suppose $i_1<i_2<\cdots < i_{2k}$.  Then
    $e_I = e_{i_1}e_{i_2}\cdots e_{i_{2k}}$ realizes $e_I$ as the product of an
    even number of elements of $S^{n-1}$, showing that $e_I\in \Spin(n)$.
\end{proof}
We conclude the section by establishing
Proposition~\ref{prop:spinequivariance}.
\begin{lemma}
    If $U,V\in SO(n)$ then there is a corresponding invertible linear map
    $\Phi_{(U,V)}:\Cl^0(n)\rightarrow \Cl^0(n)$ such that for any $x\in
    \Cl^0(n)$, $UQ(x)V^T = Q(\Phi_{(U,V)}x)$ and $\Phi_{(U,V)}(\Spin(n)) =
    \Spin(n)$. 
\end{lemma}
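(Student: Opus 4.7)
The plan is to lift $U$ and $V$ from $SO(n)$ to $\Spin(n)$ using the surjectivity $Q(\Spin(n)) = SO(n)$ established in Lemma~\ref{lem:qonto}, and then define $\Phi_{(U,V)}$ via left and right Clifford multiplication by these lifts. Concretely, I would pick $u, v \in \Spin(n)$ with $Q(u) = U$ and $Q(v) = V$, and set
\[ \Phi_{(U,V)}(x) = u\, x\, \conj{v}. \]
This is manifestly linear in $x$, and since $u, \conj{v} \in \Cl^0(n)$ and $\Cl^0(n)$ is a subalgebra, the map carries $\Cl^0(n)$ into itself.

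The key computation I would carry out is $U Q(x) V^T = Q(\Phi_{(U,V)}(x))$. Writing $u = u_1 u_2 \cdots u_{2k}$ and $v = v_1 v_2 \cdots v_{2\ell}$ as even-length products of elements of $S^{n-1}$ (possible by the definition of $\Spin(n)$), we have $\conj{v} = v_{2\ell}\cdots v_1$. Iterating Lemma~\ref{lem:hom}, which covers both left and right multiplication by sphere elements, gives
\[ \tildeQ(u\, x\, \conj{v}) = \tildeQ(u_1)\cdots\tildeQ(u_{2k})\,\tildeQ(x)\,\tildeQ(v_{2\ell})\cdots\tildeQ(v_1) = \tildeQ(u)\,\tildeQ(x)\,\tildeQ(\conj{v}). \]
Since $\tildeQ$ and $Q$ agree on $\Cl^0(n)$, this yields $Q(\Phi_{(U,V)}(x)) = U\, Q(x)\, Q(\conj{v})$. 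Finally, the same iterated identity applied to products in $\Spin(n)$ shows that $Q|_{\Spin(n)}$ is a group homomorphism onto $SO(n)$, and $\conj{v} = v^{-1}$ by Lemma~\ref{lem:group}, so $Q(\conj{v}) = V^{-1} = V^T$, as required.

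For invertibility and preservation of $\Spin(n)$: the identities $u\conj{u} = \conj{u}u = \cliffid = v\conj{v} = \conj{v}v$ from Lemma~\ref{lem:group} immediately give $y \mapsto \conj{u}\, y\, v$ as a two-sided inverse to $\Phi_{(U,V)}$, and because $\Spin(n)$ is closed under products and inverses (again Lemma~\ref{lem:group}), both $\Phi_{(U,V)}$ and $\Phi_{(U,V)}^{-1}$ will carry $\Spin(n)$ into $\Spin(n)$. The only real work is the iterated application of Lemma~\ref{lem:hom}; the one subtlety to be careful about is placing $\conj{v}$ rather than $v$ on the right, so that the resulting $Q$-image is $V^T$ (equivalently $V^{-1}$) rather than $V$. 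Beyond that, everything reduces to the group structure of $\Spin(n)$ already in hand, so I do not anticipate any serious obstacle.
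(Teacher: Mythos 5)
Your proposal is correct and follows essentially the same route as the paper: lift $U,V$ to $u,v\in\Spin(n)$ via Lemma~\ref{lem:qonto}, set $\Phi_{(U,V)}(x)=ux\conj{v}$, obtain $UQ(x)V^T=Q(ux\conj{v})$ by iterating Lemma~\ref{lem:hom}, and get invertibility and preservation of $\Spin(n)$ from Lemma~\ref{lem:group}. The only cosmetic difference is that you justify $Q(\conj{v})=V^T$ through the homomorphism property $Q(v)Q(\conj{v})=Q(v\conj{v})=I$, while the paper phrases the same computation as orthogonality of $Q(v)$; the substance is identical.
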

\begin{proof}
    By Lemma~\ref{lem:qonto} there are $u,v\in \Spin(n)$ such that $Q(u) = U$
    and $Q(v) = V$. Define $\Phi_{(U,V)}:\Cl^0(n)\rightarrow \Cl^0(n)$ by
    $\Phi_{(U,V)}(x) = ux\conj{v}$. Then $\Phi_{(U,V)}$ is invertible with
    inverse $\Phi_{(U,V)}^{-1}(x) = \conj{u}xv$.  Since $Q(1) = I$, by
    Lemma~\ref{lem:hom} we have that whenever $v\in \Spin(n)$, $Q(v)$ is
    orthogonal and so
    \[ I = Q(1) = Q(v \conj{v}) = Q(v)Q(\conj{v}) = Q(v)Q(v)^T.\]
    Again by Lemma~\ref{lem:hom}, for any $x\in \Cl^0(n)$, 
    \[ UQ(x)V^T = Q(u)Q(x)Q(v)^T = Q(u)Q(x)Q(\conj{v}) = Q(ux\conj{v}).\]
    Finally, if $x\in \Spin(n)$ then $\Phi_{(U,V)}(x) = ux\conj{v} \in
    \Spin(n)$ by Lemma~\ref{lem:group}.  Hence $\Phi_{(U,V)}(\Spin(n)) =
    \Spin(n)$.
\end{proof}

\subsection{Matrices of the quadratic mapping (proof of Proposition~\ref{prop:mef})}
\label{sec:mef}
For $1\leq i,j\leq n$, let $A_{ij}$ be the $2^{n-1}\times 2^{n-1}$ symmetric
matrix representing the quadratic form $[Q(x)]_{ij}$, i.e.\
\[ [Q(x)]_{ij} = \langle e_i,xe_j\conj{x}\rangle = \sum_{I,J\in \Ieven}
x_Ix_J[A_{ij}]_{I,J}\]
where $\Ieven$ is the set of subsets of $[n]$ of even cardinality.  We now turn
to describing these matrices concretely, giving a proof of
Proposition~\ref{prop:mef}.

\begin{proof}[Proof of Proposition~\ref{prop:mef}]
    We first define matrices $\widetilde{A}_{ij}$ for $1\leq i,j \leq n$. These
    are the $2^n\times 2^n$ symmetric matrices representing the quadratic forms
    $[\tildeQ(x)]_{ij}$, i.e.\
\[ [\tildeQ(x)]_{ij} = \langle e_i,xe_j\conj{x}\rangle = \sum_{I,J\subseteq [n]}x_Ix_J[\widetilde{A}_{ij}]_{I,J}.\]
Since  
\[ \langle e_i,xe_j\conj{x}\rangle =\langle e_ix,xe_j\rangle = \langle x,\lambda_{\conj{e_i}}\rho_{e_j}x\rangle = 
    -\langle x,\lambda_{e_i}\rho_{e_j}x\rangle\]
it follows that 
\begin{equation}
    \label{eq:tildeA}
    \widetilde{A}_{ij} = -\lambda_{e_i}\rho_{e_j}.
\end{equation}
Recall that in Section~\ref{sec:clifford} we give concrete expressions for the
matrices defining $\lambda_{e_i}$ and $\rho_{e_j}$. Hence~\eqref{eq:tildeA}
gives a convenient way to explicitly build $\widetilde{A}_{ij}$. Note that
$\lambda_{e_i}\rho_{e_j}$ is symmetric because $\lambda_{e_i}$ and $\rho_{e_j}$
are skew-symmetric and $\lambda_{e_i}$ and $\rho_{e_j}$ commute (because the
operations of left- and right-multiplication commute in an associative
algebra).

We now obtain concrete expressions for the $A_{ij}$ (rather than the
$\widetilde{A}_{ij}$). Note that since $Q$ is the restriction of $\tildeQ$ to
the subspace $\Cl^0(n)$, so for each $1\leq i,j \leq n$, $A_{ij}$ is the
$2^{n-1}\times 2^{n-1}$ principal submatrix of $\widetilde{A}_{ij}$ indexed by
rows and columns corresponding to the basis elements $(e_I)_{\Ieven}$
of $\Cl^0(n)$. This submatrix can be extracted by computing
\[ A_{ij} =  P_{\textup{even}}^T\widetilde{A}_{ij}P_{\textup{even}}\]
where $P_{\textup{even}}$ is the $2^{n}\times 2^{n-1}$ matrix with exactly one
non-zero entry per column and at most one non-zero entry per row given by 
\begin{align*}
    P_{\textup{even}} & = \frac{1}{2}\begin{bmatrix} I_{2^{n-1}} + \Rtwo\otimes \cdots \otimes \Rtwo\\
    \phantom{\ldots}\\
        I_{2^{n-1}} - \Rtwo \otimes \cdots \otimes\Rtwo\end{bmatrix}
\end{align*}
which verifies~\eqref{eq:AB}.
\end{proof}

\bibliographystyle{plain}
\bibliography{son_bib}

\end{document}